\documentclass[11pt,reqno]{amsart}

\usepackage[utf8x]{inputenc}
\usepackage[english]{babel}

\usepackage{graphicx}
\usepackage{mathtools}

\usepackage{amssymb,enumerate,amsmath}
\usepackage[bookmarks,colorlinks,citecolor=magenta,pdftex]{hyperref}
\usepackage{tikz}
\usepackage{array}
\usepackage{mathrsfs}
\usepackage{float}
\usepackage{algorithmic}
\usepackage{dsfont}
\usepackage{paralist}
\usepackage[ruled]{algorithm}
\usepackage[a4paper,margin=2.5cm]{geometry}

\newcommand\Defn[1]{\textbf{\color{black}#1}}

\definecolor{light-gray}{gray}{0.8}

\newcommand\R{\mathbb{R}}

\newcommand\rank{\mathrm{rank}}

\newcommand\GGen{\mathsf{Gen}}
\newcommand\Gen{\rank_\GGen}
\newcommand\VGen{\mathcal{V}^\GGen}

\newcommand\LLev{\mathsf{Lev}}
\newcommand\Lev{\rank_\LLev}
\newcommand\VLevels{\mathcal{V}^\LLev}

\newcommand\TTh{\mathsf{Th}}		 
\newcommand\Th{\rank_\TTh}		 
\newcommand\VTheta{\mathcal{V}^\TTh}

\newcommand\PPsd{\mathsf{Psd}} 
\newcommand\Psd{\rank_\PPsd}
\newcommand\VPsd{\mathcal{V}^\PPsd}
\newcommand\VPmin{\mathcal{V}^\PPsd_\mathsf{min}}

\newcommand\oF{\overline{F}}

\renewcommand\emptyset{\varnothing}

\newcommand\Hrk{\rank_{\sqrt{\ }}}

\newcommand\0{\mathbf{0}}
\newcommand\1{\mathbf{1}}

\newcommand\Char[1]{\1_{#1}}

\newcommand\GSeriesPar{\mathcal{G}_{\mathsf{SP}}}

\newcommand\Matroids{\mathcal{M}}
\newcommand\MLevels{\Matroids^\LLev}
\newcommand\MTheta{\Matroids^\TTh}
\newcommand\MGen{\Matroids^\GGen}

\newcommand\BB{\mathcal{B}}
\newcommand\MM{M}
\newcommand\NN{N}
\newcommand\SeriesConn{\mathcal{S}}

\newcommand\x{\mathbf{x}}

\newcommand\GLevels{\mathcal{G}^\mathsf{Lev}}
\newcommand\GTheta{\mathcal{G}^\TTh}

\DeclareMathOperator{\MG}{\MM(G)}
\DeclareMathOperator{\rk}{\mathbf{rk}}%
\DeclareMathOperator{\conv}{\mathrm{conv}}
\DeclareMathOperator{\cone}{cone}

\newcommand\cF{\overline{F}}
\newcommand\Fme{\hat{F}}

\numberwithin{equation}{section}
\newtheorem{thm}{Theorem}[section]
\newtheorem{cor}[thm]{Corollary}
\newtheorem{prop}[thm]{Proposition}
\newtheorem{lemma}[thm]{Lemma}

\theoremstyle{definition}
\newtheorem{defi}[thm]{Definition}

\newtheorem{ex}{Example}

\parindent=0pt
\parskip=3pt

\title{Theta rank, levelness, and matroid minors}

\author{Francesco Grande}
\address{Fachbereich Mathematik und Informatik, %
Freie Universit\"at Berlin, Berlin, %
Germany}
\email{fgrande@math.fu-berlin.de}

\author{Raman Sanyal}
\address{Institut f\"ur Mathematik, Goethe-Universit\"at Frankfurt, Germany}
\email{sanyal@math.uni-frankfurt.de}

\keywords{sum-of-squares, Theta rank, levelness,
matroid base configurations, excluded minor characterization}
\subjclass[2010]{05B35, 52A27, 14P05, 05C83}

\date{\today}
\thanks{F.~Grande was supported by DFG within the research training group
``Methods for Discrete Structures'' (GRK1408).
R.~Sanyal was supported by the European Research Council under the
European Union's Seventh Framework Programme (FP7/2007-2013) / ERC grant
agreement n$^\mathrm{o}$ 247029 and the DFG
Collaborative Research Center SFB/TR 109 ``Discretization
in Geometry and Dynamics''.}

\begin{document}

\maketitle

\begin{abstract}
    The Theta rank of a finite point configuration $V$ is the maximal degree
    necessary for a sum-of-squares representation of a non-negative affine
    function on $V$. This is an important invariant for polynomial
    optimization that is in general hard to determine. We study the Theta rank
    and levelness, a related discrete-geometric invariant, for matroid base
    configurations. It is shown that the class of matroids with bounded Theta
    rank or levelness is closed under taking minors. This allows for a
    characterization of matroids with bounded Theta rank or levelness in terms
    of forbidden minors.  We give the complete (finite) list of excluded
    minors for Theta-$1$ matroids which generalizes the well-known
    series-parallel graphs.  Moreover, the class of Theta-$1$ matroids can be
    characterized in terms of the degree of generation of the vanishing ideal
    and in terms of the psd rank for the associated matroid base polytope.  We
    further give a finite list of excluded minors for $k$-level graphs and
    matroids and we investigate the graphs of Theta rank~$2$.
\end{abstract}

\section{Introduction}
\label{sec:intro}

Let $V$ be a configuration of finitely many points in $\R^n$. An affine
function $\ell(\x) = \delta - \langle c, \x \rangle$ that only takes
non-negative values on 
$V$ is called $\mathbf{k}$\Defn{-sos} with respect to $V$ if there exist
polynomials ${h_1,\ldots,h_s\in \mathbb{R}[x_1,\ldots , x_n]}$ such that $\deg
h_i\leq k$ and
\begin{equation}\label{eqn:k-sos}
    \ell(v) \ = \ 
    h_1^2(v) + 
    h_2^2(v) + 
    \cdots + 
    h_s^2(v) 
\end{equation}
for all $v \in V$. The \Defn{Theta rank} $\Th(V)$ of $V$ is the smallest $k
\ge 0$ such that every non-negative affine function on $V$ is $k$-sos. The
Theta rank was introduced in~\cite{GPT} as a measure for the `complexity' of
linear optimization over $V$ using tools from polynomial optimization. If $V$
is given as the solutions to a system of polynomial equations, then the size
of a semidefinite program for the (exact) optimization of a linear function
over $V$ is of order  $O(n^{\Th(V)})$.  For many practical applications, for
example in combinatorial optimization, an algebraic description of $V$ is
readily available and the semidefinite programming approach is the method of
choice. Clearly, situations with high Theta rank render the approach
impractical. We are interested in
\[
    \VTheta_{k} \ := \ \{ V \text{ point configuration\,} : \Th(V) \le k \}.
\]
As $V$ is finite and $\ell(\x)$ non-negative on $V$, we may interpolate
$\sqrt{\ell(\x)}$ over $V$ by a single polynomial which shows that $\Th(V) \le
|V|-1$; cf.~\cite[Rem.~4.3]{GPT}. This, however, is a rather crude estimate
as the $0/1$-cube $V = \{0,1\}^n$ has Theta rank $1$.  

Let $\ell(\x)$ be a non-negative affine function on $V$. The subconfiguration
$V^\prime = \{ v \in V : \ell(v)=0\}$ is called a \Defn{face} of $V$ with supporting
hyperplane $H = \{ \x \in \R^n :\ell(\x) = 0\}$.  If $V^\prime \neq V$  is
inclusion-maximal, then $V^\prime$ is called a \Defn{facet} and $H$ (and
equivalently $\ell(\x)$) \Defn{facet-defining}. If $V$ is a full-dimensional
point configuration then $H$ and $\ell(\x)$, up to positive scaling, are unique.
It follows from basic convexity that $\Th(V)$ is the smallest $k$ such that
all facet-defining affine functions $\ell(\x)$ are $k$-sos. 
A point configuration $V$ is \Defn{$k$-level} if for every facet-defining
hyperplane $H$ there are $k$ parallel hyperplanes $H = H_1,H_2,\dots,H_k$ with 
\[
    V \ \subseteq \ H_1 \cup H_2 \cup \cdots \cup H_k.
\]
Equivalently, $V$ is $k$-level if every facet-defining affine function
$\ell(\x)$ takes at most $k$ distinct values on $V$. 
The
\Defn{levelness} $\Lev(V)$ of $V$ is the smallest $k$ such that $V$ is
$k$-level. Using polynomial interpolation as before, it is easy to see that
$\Th(V) \le \Lev(V) - 1$; see~\cite[Rem.~4.3]{GPT}.  Hence, the class
$\VLevels_k$ of all $k$-level point configurations is a subclass of
$\VTheta_{k-1}$. A main result of~\cite{GPT} is the following characterization
of $\VTheta_1$.
\begin{thm}[{\cite[Thm.~4.2]{GPT}}]\label{thm:GPT}
    Let $V$ be a finite point configuration. Then $V$ has Theta rank $1$ if
    and only if $V$ is $2$-level.
\end{thm}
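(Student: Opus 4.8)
The plan is to prove the two implications separately, with the backward direction being essentially immediate and the forward direction carrying the real content. For the ``if'' direction, suppose $V$ is $2$-level, so that $\Lev(V) = 2$. The inequality $\Th(V) \le \Lev(V) - 1$ recorded above immediately yields $\Th(V) \le 1$; moreover $\Th(V) = 0$ is impossible once $V$ has a genuine facet, since a $0$-sos function is a non-negative constant and hence takes a single value on $V$, contradicting properness of the facet. Therefore $\Th(V) = 1$.

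For the ``only if'' direction I would argue through a structural analysis of a $1$-sos representation. After replacing $\R^n$ by the affine hull of $V$, I may assume $V$ is full-dimensional, so that every facet $F$ affinely spans its supporting hyperplane $H = \{\x : \ell(\x) = 0\}$. Let $\ell(\x) = \delta - \langle c, \x\rangle$ be any facet-defining function, normalized so that its minimum value $0$ is attained exactly on $F$. Because $\Th(V) = 1$, the function $\ell$ is $1$-sos, that is, there are affine polynomials $h_1, \dots, h_s$ with $\ell(v) = \sum_i h_i(v)^2$ for all $v \in V$.

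The key step is to pin down the $h_i$ on the facet. For $v \in F$ we have $\ell(v) = 0$, and since a sum of squares vanishes only when each summand vanishes, every $h_i$ is zero on $F$. An affine function vanishing on $F$ vanishes on $\text{aff}(F) = H$, and as $\ell$ defines $H$ this forces $h_i = \gamma_i \ell$ for scalars $\gamma_i$. Setting $\gamma := \sum_i \gamma_i^2 \ge 0$, the representation collapses to $\ell(v) = \gamma\, \ell(v)^2$ for all $v \in V$, equivalently $\ell(v)\bigl(\gamma\,\ell(v) - 1\bigr) = 0$. Hence $\ell$ takes only the two values $0$ and $1/\gamma$ on $V$. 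Since $\ell$ was an arbitrary facet-defining function, every such function takes at most two values on $V$, which is exactly the statement that $V$ is $2$-level.

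The main obstacle, and the place demanding care, is the reduction guaranteeing that each $h_i$ is proportional to $\ell$: this hinges on the fact that a facet of a full-dimensional configuration affinely spans its supporting hyperplane, so that an affine function vanishing on $F$ is forced to be a multiple of $\ell$ rather than merely vanishing on a lower-dimensional subset. Once full-dimensionality is arranged, the remaining algebra is routine, and the non-negativity of $\gamma$ (automatic since it is itself a sum of squares) is what makes the second level value $1/\gamma$ well defined.
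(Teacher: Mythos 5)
Your proposal is correct, but note that the paper itself offers no proof of this statement: it is imported verbatim from Gouveia--Parrilo--Thomas as \cite[Thm.~4.2]{GPT}, so there is no internal proof to compare against. What you have written is essentially the standard argument from that source: the backward direction via the interpolation bound $\Th(V)\le\Lev(V)-1$, and the forward direction by evaluating a $1$-sos certificate on the facet, concluding $h_i(v)=0$ for all $v\in F$ from positivity of squares, and then promoting this pointwise vanishing to the identity $h_i=\gamma_i\ell$ because $F$ affinely spans the supporting hyperplane of a full-dimensional configuration --- which you rightly identify as the load-bearing step, and which is legitimate since inclusion-maximal proper faces of $V$ correspond to facets of $\conv(V)$ and hence have affine dimension $n-1$. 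Two small wrinkles, neither fatal: first, your backward direction leans on $\Th(V)\le\Lev(V)-1$, which the paper states without proof; its justification (interpolating $\sqrt{\ell}$ by a degree-$(k-1)$ polynomial in $\ell$ on its $k$ values, together with the reduction to facet-defining functionals ``from basic convexity'') deserves at least a sentence, since the reduction to facets is exactly what lets you certify \emph{all} non-negative linear functions, not just facet-defining ones. Second, your closing remark slightly misstates the role of $\gamma=\sum_i\gamma_i^2$: non-negativity is automatic, but what you need for the value $1/\gamma$ is \emph{positivity}, which holds because $\ell$ is strictly positive at some point of $V\setminus F$ (the facet is proper), whereas $\gamma=0$ would force $\ell\equiv 0$ on $V$ --- a case in which two-valuedness is trivial anyway. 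With those clarifications folded in, the proof is complete.
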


For $k \ge 2$, it can be shown that $\VLevels_k\subsetneq \VTheta_{k-1}$.  The
(convex) polytopes $P = \conv(V)$ for $2$-level point configurations are very
interesting. They arise in the study of extremal centrally-symmetric
polytopes~\cite{SWZ} as well as in statistics under the name of
\emph{compressed} polytopes~\cite{Sullivant}. Every $2$-level polytope is
affinely isomorphic to a $0/1$-polytope which gives them a combinatorial
character. Nevertheless we lack a genuine understanding of this class of
polytopes.

In this paper, we study the subclasses $\MTheta_k \subset \VTheta_k$ of point
configurations coming from the bases of \emph{matroids}. We recall the notion
of matroids and the associated geometric objects in
Section~\ref{sec:matroids_pts}. In particular, we show that the classes
$\MTheta_k$ are closed under taking \emph{minors}. This, in principle, allows
for a characterization of $\MTheta_k$ in the form of forbidden sub-structures.
In Section~\ref{sec:2level}, we focus on the class $\MTheta_1$ of matroids of
Theta rank $1$ or, equivalently, $2$-level matroids. Our first main result is
the following.

\begin{thm}\label{thm:main}
    Let $\MM = (E,\BB)$ be a matroid and $V_\MM \subset \R^E$ the
    corresponding base configuration. The following are equivalent:
    \begin{enumerate}[\rm (i)]
        \item $V_\MM$ has Theta rank $1$ or, equivalently, is $2$-level;
        \item $\MM$ has no minor isomorphic to $\MM(K_4)$, $\mathcal{W}^3$,
            $Q_6$, or $P_6$;
        \item $\MM$ can be constructed from uniform matroids by taking direct
            sums or $2$-sums;
        \item The vanishing ideal $I(V_\MM)$ is generated in degrees $\le 2$;
        \item The base polytope $P_\MM$ has minimal psd rank.
    \end{enumerate}
\end{thm}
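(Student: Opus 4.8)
The plan is to establish the three conditions $\mathrm{(i)}$, $\mathrm{(ii)}$, $\mathrm{(iii)}$ by the cycle $\mathrm{(iii)} \Rightarrow \mathrm{(i)} \Rightarrow \mathrm{(ii)} \Rightarrow \mathrm{(iii)}$, and then to attach $\mathrm{(iv)}$ and $\mathrm{(v)}$ as separate equivalences with $\mathrm{(i)}$. By Theorem~\ref{thm:GPT} condition $\mathrm{(i)}$ is the same as asserting that $V_\MM$ is $2$-level, so I would phrase the entire argument in terms of levelness and only translate back to sums of squares at the very end.

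For $\mathrm{(iii)} \Rightarrow \mathrm{(i)}$ I would start from the base case that every uniform matroid is $2$-level: the base polytope of $U_{r,n}$ is the hypersimplex $\Delta_{r,n} = \conv\{\, \x \in \{0,1\}^n : \sum_i x_i = r \,\}$, whose facet-defining functionals are the $\pm x_i$ and which take only the two values $0$ and $1$ on $V_\MM$. It then remains to show that $2$-levelness is preserved under the two gluing operations. For direct sums this is immediate from $P_{\MM_1 \oplus \MM_2} = P_{\MM_1} \times P_{\MM_2}$, since a facet of a product is a product of a facet with the other factor and the associated functional inherits its (at most two) values. For $2$-sums I expect to need a short lemma describing the facets of the base polytope of a $2$-sum in terms of the facets of the two summands; granting such a description, the two-valuedness of each facet-defining functional should transfer. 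Next, for $\mathrm{(i)} \Rightarrow \mathrm{(ii)}$ I would combine the minor-closedness of $\MTheta_1$ (established earlier in the paper) with a finite verification that none of $M(K_4)$, $\mathcal{W}^3$, $Q_6$, $P_6$ is $2$-level; since each is a rank-$3$ matroid on six elements, it suffices to exhibit for each one facet-defining functional of its base polytope taking three distinct values. A matroid with any of these as a minor then has Theta rank at least $2$, so $\mathrm{(i)}$ forbids such minors.

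The implication $\mathrm{(ii)} \Rightarrow \mathrm{(iii)}$ is, I expect, the main obstacle and is essentially a structural statement about matroids independent of the geometry. Here I would use the Cunningham--Edmonds/Seymour tree decomposition: every matroid is built by direct sums and $2$-sums from its $3$-connected components together with circuits $U_{n-1,n}$ and cocircuits $U_{1,n}$, the latter two already being uniform. The whole implication thus reduces to the key lemma that a $3$-connected matroid without a minor in $\{M(K_4), \mathcal{W}^3, Q_6, P_6\}$ is uniform, equivalently that every $3$-connected non-uniform matroid contains one of these four as a minor. To prove the lemma I would peel elements off using Tutte's wheels-and-whirls theorem while maintaining $3$-connectivity, aiming to reduce to a small non-uniform $3$-connected minor of rank or corank $3$ on six elements, and then conclude by a direct classification of the rank-$3$ matroids on six elements, among which $M(K_4)$, $\mathcal{W}^3$, $Q_6$, $P_6$ are exactly the non-uniform $3$-connected ones.

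It remains to connect $\mathrm{(iv)}$ and $\mathrm{(v)}$ to $\mathrm{(i)}$. For $\mathrm{(v)}$ I would simply apply to $P_\MM$ the known characterization (due to Gouveia, Robinson and Thomas) that a polytope attains the minimum possible psd rank precisely when it is $2$-level, which gives $\mathrm{(i)} \Leftrightarrow \mathrm{(v)}$ at once. For $\mathrm{(iv)}$ I would use the theta-body machinery of~\cite{GPT}: Theta rank $1$ is equivalent to the exactness $\ThetaBody_1(I(V_\MM)) = \conv(V_\MM)$, and quadratic generation of $I(V_\MM)$ feeds the degree-$2$ certificates needed to force this exactness, yielding $\mathrm{(iv)} \Rightarrow \mathrm{(i)}$. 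The converse $\mathrm{(iii)} \Rightarrow \mathrm{(iv)}$ I would prove by the same decomposition strategy as for $\mathrm{(iii)} \Rightarrow \mathrm{(i)}$: the vanishing ideal of a hypersimplex is generated in degrees $\le 2$, and I would show this property is stable under the direct-sum and $2$-sum constructions of the base configuration.
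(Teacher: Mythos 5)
Your handling of the equivalences (i)--(iii) matches the paper's architecture (Theorem~\ref{thm:main1}): uniform matroids as base case, direct sums via products, and the same reduction of (ii)$\Rightarrow$(iii) through the tree decomposition into $3$-connected pieces plus the key lemma that a $3$-connected matroid avoiding the four minors is uniform. Two remarks on the details you defer. The ``short lemma'' you need for $2$-sums is exactly what the paper supplies via the series connection: Lemma~\ref{lem:P_series} shows $P_{\SeriesConn(\MM_1,\MM_2)}$ is the subdirect product $(P_{\MM_1}\times P_{\MM_2})\cap\{x_{p_1}+x_{p_2}\le 1\}$, the extra facet functional is two-valued (Corollary~\ref{cor:Series_level}), and the $2$-sum is then handled as a contraction of the series connection together with minor-closedness. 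For the key lemma, the paper simply cites Chaourar (Lemma~\ref{lem:3-uniform}); your plan to reprove it by wheels-and-whirls peeling has a gap as stated, since the deletion or contraction that preserves $3$-connectivity may destroy non-uniformity, so the descent to a six-element rank- or corank-$3$ minor does not close without further work --- but since the result is citable, this is not fatal.

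The genuine failures are in (iv) and (v), and in both cases you invoke a general principle that is false and whose failure is precisely why these equivalences are theorems about matroids. For (v): there is no GRT characterization that psd-minimality equals $2$-levelness. What \cite{GRT} proves is only the sufficiency (Proposition~\ref{prop:psd}: $\Th(V)=1$ implies $\conv(V)$ psd-minimal), and the same paper exhibits a psd-minimal polytope that is \emph{not} $2$-level, as noted just before Theorem~\ref{thm:main3}. So your one-line deduction of (v)$\Rightarrow$(i) rests on a false statement; the paper instead proves face-heredity of psd-minimality (Proposition~\ref{prop:PSD_closed}, using $\Psd(P)\ge\Psd(F)+1$) to reduce to the four excluded minors, and then shows these are not psd-minimal via the Hadamard-square-root criterion $\Hrk(S_P)=\dim P+1$, through an explicit $7\times 7$ slack submatrix whose analysis reduces to Proposition~\ref{prop:tech}. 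For (iv): the claim that quadratic generation of $I(V_\MM)$ ``feeds the degree-$2$ certificates needed to force'' $\ThetaBody_1$-exactness is false for point configurations in general --- Example~\ref{ex:gen2}, $V=\{(1,0),(0,1),(2,0),(0,2)\}$, has $\Gen(V)\le 2$ but is not $2$-level, hence $\Th(V)>1$; indeed $\Gen$ is projectively invariant while $\Th$ is not. The paper's proof of (iv)$\Rightarrow$(i) (Theorem~\ref{thm:main2}) again goes through excluded minors: $\MGen_{2}$ is face-hereditary and thus minor-closed on base configurations, and a Gr\"obner-basis computation certifies that $\MM(K_4)$, $\mathcal{W}^3$, $Q_6$, and $P_6$ require generators of degree $3$. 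Your converse direction (i)/(iii)$\Rightarrow$(iv) is sound in spirit (the paper gets it directly from the $2$-level facet description, Proposition~\ref{prop:gen2}), but as written the equivalences (i)$\Leftrightarrow$(iv) and (i)$\Leftrightarrow$(v) remain unproven.
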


Part (ii) yields a complete and, in particular, finite list of excluded minors
whereas (iii) gives a synthetic description of this class of matroids.  The
parts (iv) and (v) are proven in Section~\ref{sect:gen_psd}. The former states
that $2$-level matroids are precisely those matroids $\MM$ for which the base
configuration $V_\MM$ is cut out by quadrics (Theorem~\ref{thm:main2}). This
contrasts the situation for general point configurations
(Example~\ref{ex:gen2}). The psd rank of a polytope $P$ is the smallest `size'
of a spectrahedron that linearly projects to $P$. The psd rank was studied
in~\cite{GPT2,GRT} and it was shown that the psd rank $\Psd(P)$ is at least
$\dim P + 1$. Part (v) shows that the $2$-level matroids are exactly those
matroids for which the psd rank of the base polytope $P_\MM = \conv(V_\MM)$ is
minimal. Again, this is in strong contrast to the psd rank of general
polytopes.

In Section~\ref{sec:higher} we give a complete list of excluded minors for
$k$-level graphs (Theorem~\ref{thm:excludminorsgraphs}). The classes of
$3$-level and $4$-level graphs appear in works of
Halin~(see~\cite[Ch.~6]{Diestel}) and Oxley~\cite{Oxley5wheel}. In particular,
the wheel with $5$ spokes $W_5$ is shown to have Theta rank $3$. Combined with
results of Oxley~\cite{Oxley5wheel}, this yields a finite list of candidates
for a complete characterization of Theta-$2$ graphs. Whereas the list of
forbidden minors for graphs is always finite, this is generally not true for
matroids. In Section~\ref{sec:klevel} we show that $k$-levelness of matroids
is characterized by finitely many excluded minors and we conjecture this to be
true for matroids of Theta rank~$k$.

\textbf{Acknowledgements.} 
We would like to thank Philipp Rostalski and Frank Vallentin for helpful
discussions regarding computations and we thank Bernd Sturmfels for his
interest in the project. We would also like to thank the two referees for
careful reading and many valuable suggestions.

\section{Point configurations and matroids}
\label{sec:matroids_pts}

In this section we study properties of Theta rank and levelness related
to the geometry of the point configuration.  In particular, we investigate 
the behavior of these invariants under taking sub-configurations. We recall
basic notions from matroid theory and associated point configurations and
polytopes. 

\subsection{Theta rank, levelness, and face-hereditary properties}
The definitions of levelness and Theta rank make only reference to the affine
hull of the configuration $V$ and thus neither depend on the embedding nor on
a choice of coordinates. To have it on record we note the following basic
property.

\begin{prop}\label{prop:TH-aff}%
    The levelness and the Theta rank of a point configuration are invariant
    under affine transformations.
\end{prop}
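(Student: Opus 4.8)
The plan is to reduce to the case of an invertible affine transformation and then track how linear functions and their sum-of-squares certificates transform. Since, as remarked, both invariants refer only to the affine hull of the configuration, I may assume that $V$ spans $\R^n$ and that the affine transformation is an invertible map $T \colon \R^n \to \R^n$, $T(\x) = A\x + b$ with $A \in \mathrm{GL}_n(\R)$; write $W = T(V)$ for the image configuration.

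First I would record the correspondence of linear functions. For a linear function $m(\mathbf{y}) = \delta' - \langle c', \mathbf{y}\rangle$ on $W$, the composite $m \circ T$ is again an affine-linear function on $V$, namely $\ell(\x) = \delta - \langle A^\top c', \x\rangle$ with $\delta = \delta' - \langle c', b\rangle$. Because $A$ is invertible, the assignment $m \mapsto m \circ T$ is a bijection between affine-linear functions, and by construction $m(T(v)) = (m\circ T)(v)$ for every $v \in V$. In particular $m$ and $m \circ T$ take the same set of values on $W$ and $V$ respectively, so non-negativity is preserved, faces correspond to faces and facets to facets under $T$, and $m$ takes exactly as many distinct values on $W$ as $m \circ T$ does on $V$. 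Reading off the definition, this already gives $\Lev(W) = \Lev(V)$.

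Next I would treat the Theta rank. The key elementary fact is that composing a polynomial with an invertible affine map preserves its degree: if $h \in \R[x_1,\dots,x_n]$ and $T$ is affine and invertible, then $\deg (h \circ T^{-1}) = \deg h$. Now suppose $\ell = m \circ T$ is $k$-sos on $V$, say $\ell(v) = \sum_i h_i(v)^2$ with $\deg h_i \le k$ for all $v \in V$. Substituting $v = T^{-1}(w)$ for $w \in W$ yields $m(w) = \ell(T^{-1}(w)) = \sum_i (h_i \circ T^{-1})(w)^2$, which is a $k$-sos certificate for $m$ on $W$ since each $h_i \circ T^{-1}$ has degree at most $k$. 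As $m \mapsto m \circ T$ is a bijection onto the non-negative linear functions on $V$, this shows $\Th(W) \le \Th(V)$; applying the same argument to $T^{-1}$ gives the reverse inequality, hence $\Th(W) = \Th(V)$.

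The argument is essentially bookkeeping, so I do not expect a genuine obstacle. The only points requiring care are the reduction to a full-dimensional configuration and an invertible map $T$ (so that $T^{-1}$ is again affine and degree-preserving), and the verification that composition with an invertible affine map neither raises nor lowers polynomial degree. Both are routine: the degree claim holds because an invertible affine substitution is an automorphism of $\R[x_1,\dots,x_n]$ preserving the degree filtration.
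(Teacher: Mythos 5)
Your proof is correct and matches the paper's intent: the paper states this proposition without a detailed proof, justifying it only by the remark that levelness and Theta rank refer solely to the affine hull and hence do not depend on the embedding or choice of coordinates. Your writeup simply makes this explicit — the bijective correspondence $m \mapsto m \circ T$ of linear functions preserving values, faces, and levels, plus the fact that substituting the invertible affine map $T^{-1}$ into the certificates $h_i$ preserves degree — and all the bookkeeping (including the reduction to an invertible $T$ on the affine hull) is handled correctly.
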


That this does not hold for (admissible) projective transformations is clear
for the levelness and for the Theta rank follows from Theorem~\ref{thm:GPT}.

\begin{prop}\label{prop:TH-product}%
    Let $V_1 \subset \R^{d_1}$ and $V_2 \subset \R^{d_2}$ be point
    configurations. Then the Theta rank satisfies $\Th(V_1 \times V_2) = \max(\Th(V_1),\Th(V_2))$. The
    same is true for $\Lev(V_1 \times V_2)$.
\end{prop}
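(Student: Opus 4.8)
The plan is to reduce both invariants to the facets of the product polytope and then exploit that every facet of a product of polytopes involves only one of the two factors. Concretely, $\conv(V_1 \times V_2) = \conv(V_1) \times \conv(V_2)$, and the facets of a product $P_1 \times P_2$ are exactly the sets $F_1 \times P_2$ and $P_1 \times F_2$, where $F_i$ ranges over the facets of $P_i$. Hence every facet-defining linear function of $V_1 \times V_2$ is, after writing the coordinates as $(x_1,x_2) \in \R^{d_1} \times \R^{d_2}$, of the form $\ell(x_1,x_2) = \ell_1(x_1)$ for a facet-defining function $\ell_1$ of $V_1$, or of the symmetric form $\ell(x_1,x_2) = \ell_2(x_2)$. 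By Proposition~\ref{prop:TH-aff} I may assume $V_1$ and $V_2$ are full-dimensional, so that these facet-defining functions are unique up to positive scaling and the facet-reductions of $\Th$ and $\Lev$ apply cleanly. It then suffices to analyze a single function of the form $\ell_1(x_1)$ and argue symmetrically for $\ell_2$.

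For levelness the key observation is that the coordinate projection $V_1 \times V_2 \to V_1$ is onto (using $V_2 \neq \emptyset$) and that $\ell_1$ factors through it, so $\ell_1$ takes exactly the same set of values on $V_1 \times V_2$ as on $V_1$. Thus a facet $F_1 \times P_2$ is $k$-level in $V_1 \times V_2$ precisely when $F_1$ is $k$-level in $V_1$, and symmetrically for the facets $P_1 \times F_2$. Taking the maximum over all facets of both types yields $\Lev(V_1 \times V_2) = \max(\Lev(V_1),\Lev(V_2))$.

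For the Theta rank I would prove the following equivalence: a facet-defining function $\ell_1(x_1)$ is $k$-sos on $V_1 \times V_2$ if and only if it is $k$-sos on $V_1$. The direction ``$\Leftarrow$'' is immediate, since a representation $\ell_1 = \sum_i h_i^2$ with $h_i \in \R[x_1]$ of degree $\le k$ valid on $V_1$ remains valid on $V_1 \times V_2$ once the $h_i$ are read as polynomials in $(x_1,x_2)$ that ignore $x_2$. For ``$\Rightarrow$'' I specialize: given $\ell_1(v_1) = \sum_i g_i(v_1,v_2)^2$ for all $(v_1,v_2) \in V_1 \times V_2$ with $\deg g_i \le k$, fix any $v_2^0 \in V_2$ and set $h_i(x_1) := g_i(x_1,v_2^0)$; then $\deg h_i \le k$ and $\ell_1(v_1) = \sum_i h_i(v_1)^2$ for every $v_1 \in V_1$, because $(v_1,v_2^0) \in V_1 \times V_2$. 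Combining this equivalence with the classification of facet-defining functions above and the fact (recalled in the introduction) that $\Th$ is the least $k$ for which all facet-defining functions are $k$-sos gives $\Th(V_1 \times V_2) = \max(\Th(V_1),\Th(V_2))$.

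The arguments are short, and the only genuine content is the specialization step in the ``$\Rightarrow$'' direction together with the structural fact that facets of a product polytope never mix the two factors. I do not anticipate a real obstacle; the one point requiring care is to apply the facet-reductions of $\Th$ and $\Lev$ to the product inside its own affine hull, which is why I normalize via Proposition~\ref{prop:TH-aff} at the outset.
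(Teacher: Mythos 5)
Your proof is correct and follows essentially the same route as the paper: both rest on the structural fact that facet-defining linear functions of $V_1 \times V_2$ depend on only one factor, after which sos-representations lift from a factor to the product. The paper states this in two lines, leaving implicit the specialization step $h_i(x_1) := g_i(x_1, v_2^0)$ (and the projection argument for levelness) that you spell out to get the reverse inequality, so your write-up is simply a fully detailed version of the same argument.
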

\begin{proof}
    \newcommand\y{\mathbf{y}}
    Every facet $F \subset V_1 \times V_2$ is of the form $F = F_1 \times V_2$
    for a facet $F_1 \subset V_1$ or $F = V_1 \times F_2$ for a facet $F_2
    \subset V_2$. If $F = F_1 \times V_2$, then there is a facet-defining
    function $\ell(\x,\y) \in \R[\x,\y]$ that does not depend $\y$ and
    $\ell(\x,0)$ is facet-defining for $F_1 \subset V_1$. Thus, any
    representation~\eqref{eqn:k-sos} of $\ell(\x,0)$ on $V_1$ is already a
    representation for $\ell(\x,\y)$ on $V_1 \times V_2$. The argument for a
    facet of the form $F = V_1 \times F_2$ is identical.
\end{proof}

The Theta rank as well as the levelness of a point configuration are not
monotone with respect to taking subconfigurations as can be seen by removing a
single point from $\{0,1\}^d$. However, it turns out that monotonicity holds
for subconfigurations induced by supporting hyperplanes. Let us call a
collection $\mathcal{P}$ of point configurations \Defn{face-hereditary} if it
is closed under taking faces. That is, 
$V \cap H \in \mathcal{P}$ for any $V \in \mathcal{P}$ and supporting
hyperplane $H$ for $V$.

\begin{lemma}\label{lem:face-hed}
    The classes $\VTheta_{ k}$ and $\VLevels_{ k}$ are face-hereditary.
\end{lemma}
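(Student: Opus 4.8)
The plan is to reduce everything to facet-defining functions and to exploit the fact that the facets of a face of a polytope are induced by facets of the whole polytope. By the remark preceding the lemma, $\Th(V)$ is the smallest $k$ such that every facet-defining linear function is $k$-sos, and $\Lev(V)$ is likewise governed by facet-defining functions; so it suffices to understand the facet-defining functions of a face $F = V \cap H$ in terms of those of $V$. Setting $P = \conv(V)$, the set $\conv(F)$ is a proper face of $P$. The combinatorial input I would use is the standard fact that every facet $F'$ of $F$ (a maximal proper subface) is of the form $F' = F \cap G$ for some facet $G$ of $P$ with $F \not\subseteq G$: since $F'$ is itself a face of $P$ and equals the intersection of the facets of $P$ containing it, not all of these facets can contain $F$ (otherwise $F \subseteq F'$, contradicting $\dim F > \dim F'$), and any facet $G \supseteq F'$ with $F \not\subseteq G$ yields a proper face $F \cap G$ of $F$ containing the facet $F'$, whence $F \cap G = F'$ by maximality.

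For levelness I would argue as follows. Assume $V \in \VLevels_k$ and let $F'$ be a facet of $F$, written as $F' = F \cap G$ with $G = \{\ell_G = 0\}$ a facet of $P$ as above. Then $\ell_G$ is a facet-defining function of $V$, hence takes at most $k$ distinct values on $V$ and therefore at most $k$ distinct values on the subset $F \subseteq V$. On the other hand, $\ell_G$ restricted to $F$ is non-negative with zero set exactly $F \cap G = F'$; since $F'$ is a facet of $F$, its restriction to $\mathrm{aff}(F)$ is a positive multiple of the facet-defining function of $F'$. Thus every facet-defining function of $F$ takes at most $k$ values on $F$, that is, $F \in \VLevels_k$.

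For the Theta rank the argument is parallel, keeping the notation $F' = F \cap G$ and $\ell_G$. Because $\Th(V) \le k$, the facet-defining function $\ell_G$ admits a representation $\ell_G(v) = \sum_i h_i(v)^2$ for all $v \in V$ with $\deg h_i \le k$. Restricting this identity to the subset $F \subseteq V$ yields the same representation for all $v \in F$, so $\ell_G|_F$ is $k$-sos with respect to $F$. As before, $\ell_G|_F$ is, up to positive scaling, a facet-defining function of $F'$, and the $k$-sos property is preserved under positive scaling (rescale each $h_i$); hence every facet-defining function of $F$ is $k$-sos and $F \in \VTheta_k$.

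The only nontrivial ingredient is the polytopal lemma that facets of the face $F$ arise as $F \cap G$ for facets $G$ of $P$. Granting this, both statements follow simply by restricting the relevant data of $\ell_G$ from $V$ to the subset $F$: neither the number of distinct values nor the existence of a degree-$\le k$ sos-representation can increase when passing to a subconfiguration on which $\ell_G$ remains facet-defining. I expect the face-lattice step to be the main point requiring care, while the degenerate cases in which $F$ has no facets (e.g.\ $F$ a single point) are immediate.
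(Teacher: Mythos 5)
Your proof is correct, but it takes a genuinely different route from the paper's. The paper fixes the supporting functional $g$ with $V' = V \cap \{g=0\}$ and, given a facet-defining $\ell$ for $V'$, perturbs it to $\ell_\delta = \ell + \delta g$ with the extremal choice $\delta = \max\{-\ell(v)/g(v) : v \in V \setminus V'\}$: this makes $\ell_\delta$ non-negative on all of $V$ while agreeing with $\ell$ on $V'$, so a $k$-sos certificate (resp.\ a bound on the number of values) for $\ell_\delta$ over $V$ restricts to one for $\ell$ over $V'$. You instead prove the face-lattice fact that every facet $F'$ of the face $F$ equals $F \cap G$ for a facet $G$ of $P$ and then restrict $\ell_G$; both arguments manufacture the same kind of object, namely a linear function non-negative on all of $V$ that agrees, up to positive scaling, with the given facet-defining function of the face. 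Your version buys two things: it handles faces of arbitrary codimension in one step (the paper's proof is phrased for codimension one and implicitly iterates), and the levelness bound is immediate by definition, since $\ell_G$ is genuinely facet-defining for $V$ and hence takes at most $k$ values on $V$. In the paper's argument this last point is slightly glossed over --- $k$-levelness only constrains facet-defining functions, and one must additionally note that the extremal $\delta$ forces the zero set of $\ell_\delta$ to contain $F'$ together with a point outside $H$, so that its affine hull has codimension one and $\ell_\delta$ is in fact facet-defining --- whereas the paper's perturbation has the advantage of being self-contained, requiring no polytope combinatorics. The auxiliary facts you rely on (reduction of Theta rank to facet-defining functions, uniqueness of the facet-defining function up to positive scaling on $\mathrm{aff}(F)$) are already recorded in the paper, so there is no gap in your argument.
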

\begin{proof}
    Let $V \subset \R^d$ be a full-dimensional point configuration and $H = \{
    p \in \R^d : g(p) = 0\}$ a supporting hyperplane such that the affine hull
    of $V^\prime := V \cap H$ has codimension $1$.  Let $\ell(\x)$ be an
    affine function that defines a facet of $V'$.  Observe that $\ell(\x)$ and
    $\ell_\delta(\x) := \ell(\x)+ \delta g(\x)$ give the same affine function
    on $V^\prime$ for all $\delta$. For
    \[
        \delta \ = \ \max \bigl\{\tfrac{-\ell(v)}{g(v)} : v  \in V \setminus
        V^\prime\bigr\}
    \]
    $\ell_\delta(\x)$ is non-negative on $V$. Hence any
    representation~\eqref{eqn:k-sos} of $\ell_\delta$ over $V$ yields a
    representation for $\ell$ over $V^\prime$. Moreover, the levelness of
    $\ell_\delta(\x)$ gives an upper bound on the levelness of $\ell(\x)$.
\end{proof}

It is interesting to note that these properties are not hereditary with
respect to arbitrary hyperplanes. Indeed, consider the point configuration
\[
V \ = \ (\{0,1\}^n \times \{-1,0,1\} ) \setminus \{\0\}
\]
It can be easily seen that $\Th(V) = \Lev(V) - 1 = 2$. The hyperplane $H = \{
\x : x_{n+1} = 0\}$ is not supporting and $V^\prime = V \cap H =
\{0,1\}^n \setminus \{\0\}$. The affine function $\ell(\x) = x_1 +
\cdots + x_n -1$ is facet-defining for $V^\prime$ with $n$ levels. As for the
Theta rank, any representation~\eqref{eqn:k-sos} yields a polynomial
$f(\x) = \ell(\x) - \sum_i h_i^2(\x)$ of degree $2k$ that vanishes on
$V^\prime$ and ${f(\0)= -1-\sum_i h_i^2(\0) <0}$. For $n > 4$, the following proposition assures
that $\Th(V^\prime) \ge 3$.

\begin{prop}\label{prop:cube_gen}
    Let $V^\prime = \{0,1\}^n \setminus \{\0\}$ and $f(\x)$ a
    polynomial vanishing on $V^\prime$ and $f(\0)\neq 0$. Then $\deg f \ge n$.
\end{prop}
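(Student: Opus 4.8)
The plan is to reduce to the case of \emph{multilinear} polynomials. Since every point of the cube satisfies $x_i^2 = x_i$, I would first replace each occurrence of a power $x_i^k$ (with $k \ge 1$) in $f$ by $x_i$; equivalently, reduce $f$ modulo the ideal $\langle x_i^2 - x_i : 1 \le i \le n\rangle$. This produces a multilinear polynomial $\hat f$ that agrees with $f$ at every point of $\{0,1\}^n$, so $\hat f$ again vanishes on $V^\prime$ with $\hat f(\0) = f(\0) \neq 0$. Crucially, each substitution $x_i^2 \mapsto x_i$ only lowers (never raises) the degree of a monomial, so $\deg \hat f \le \deg f$, and it therefore suffices to show $\deg \hat f \ge n$.

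The key point is the uniqueness of the multilinear representation: the $2^n$ squarefree monomials $\prod_{i \in S} x_i$, for $S \subseteq \{1,\dots,n\}$, are linearly independent as functions on $\{0,1\}^n$ and hence form a basis of the $2^n$-dimensional space of all real-valued functions on the cube. Consequently $\hat f$ is the \emph{unique} multilinear polynomial taking the prescribed values $f(\0)$ at $\0$ and $0$ at every other vertex. But one such polynomial can be written down explicitly, namely the (scaled) indicator of the origin
\[
    g(\x) \ = \ f(\0) \prod_{i=1}^n (1 - x_i),
\]
which equals $f(\0)$ at $\0$ and vanishes at every other $0/1$-point. By uniqueness, $\hat f = g$.

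Finally, the coefficient of the monomial $x_1 x_2 \cdots x_n$ in $g$ is $(-1)^n f(\0) \neq 0$, so $\deg \hat f = \deg g = n$, whence $\deg f \ge n$. The two facts carrying the argument---that multilinear reduction does not increase the degree, and that the multilinear representation of a function on $\{0,1\}^n$ is unique---are standard; the only thing to verify with some care is the latter, which follows from an inclusion--exclusion (M\"obius) computation showing that the squarefree monomials are a basis. Beyond recording these facts I do not anticipate a genuine obstacle.
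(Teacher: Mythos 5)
Your proof is correct and takes essentially the same route as the paper: both arguments reduce $f$ modulo $\langle x_i^2 - x_i : 1 \le i \le n\rangle$ to a multilinear polynomial and show its coefficients are $(-1)^{|\tau|}f(\0)$, so that the coefficient of $x_1\cdots x_n$ is nonzero. The only cosmetic difference is that you exhibit the interpolant $f(\0)\prod_{i=1}^n(1-x_i)$ and appeal to uniqueness of the multilinear representation, whereas the paper solves the resulting system $\sum_{\tau \subseteq \sigma} c_\tau = 0$ directly by recognizing the M\"obius function of the Boolean lattice --- the same inclusion--exclusion fact you note underlies your uniqueness step.
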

\begin{proof}
   For a monomial $\x^\alpha$, let $\tau = \{ i : \alpha_i > 0 \}$ be its
   support. Over the set of $0/1$-points it follows that $\x^\alpha$ and
   $\x^\tau := \prod_{i \in \tau} x_i$ represent the same function. Hence, we
   can assume that $f$ is of the form $f(\x) \ = \ \sum_{\tau \subseteq [n]}
   c_\tau \x^\tau$ for some $c_\tau \in \R$, $\tau \subseteq [n]$.  Moreover
   $c_{\emptyset}=f(0)\neq 0$ and without loss of generality we can assume
   $c_{\emptyset}=1$. Any point $v \in V^\prime$ is of the form $v =
   \1_\sigma$ for some $\emptyset \neq \sigma \subseteq [n]$ and we calculate
    \[
        0 \ = \ f(v) \ =\sum_{\emptyset \subseteq \tau \subseteq \sigma} c_\tau.
    \]
    It follows that $c_\tau$ satisfies the defining conditions of the
    \emph{M\"obius function} of the Boolean lattice and hence equals
    $c_\tau=(-1)^{|\tau |}$ for all $\tau \subseteq [n]$. In particular
    $c_{[n]} \neq 0$ which finishes the proof.
\end{proof}

\subsection{Matroids and basis configurations}\label{ssec:matroids}

We now introduce the combinatorial point confi\-gurations that are
our main object of study. Matroids and their combinatorial theory are a vast
subject and we refer the reader to the book by Oxley~\cite{Oxley} for further
information.

\begin{defi}\label{dfn:matroid}
    A \Defn{matroid} of rank $k$ is a pair $\MM = (E,\mathcal{B})$
    consisting of a finite ground set $E$  and a collection of bases
    $\emptyset\neq \mathcal{B}\subseteq  \binom{E}{k}$ satisfying the basis
    exchange axiom: for $B_1,B_2 \in \BB$ and $x \in B_1 \setminus B_2$ there
    is
     $y\in B_2\setminus B_1$ such that $(B_1 \setminus x)\cup y \in \BB$.
\end{defi}

A set $I \subseteq E$ is \Defn{independent} if $I \subseteq B$ for some $B \in
\BB$. The \Defn{rank} of $X$, denoted by $\rk_{\MM}(X)$, is the cardinality of
the largest independent subset contained in $X$. We simply write $\rk(X)$ if
$\MM$ is clear from the context. The rank of $\MM$ is $\rk(\MM) := \rk_\MM(E)$.
The \Defn{circuits} of $\MM$
are the inclusion-minimal dependent subsets. An element $e$ is called a
\Defn{loop} if $\{e\}$ is a circuit. We say that $e,f \in E$ are
\Defn{parallel} if $\{e,f\}$ is a circuit.  A \Defn{parallel class} $H
\subseteq E$ is the equivalence class of elements parallel to each other. The
class $H$ is \Defn{non-trivial} if $|H| > 1$.  A matroid is \Defn{simple} if
it does not contain loops or parallel elements.  A \Defn{flat} of a matroid is
a set $F\subseteq E$ such that $\rk(F)<\rk(F\cup e)$ for all  $e\in E\setminus
F$. 

A particular class of matroids that we will consider are the \Defn{graphic
matroids}. To a graph $G = (V,E)$ we associate the matroid $\MM(G) = (E,
\BB)$. The bases are exactly the spanning forests of $G$. The running example
for this section is the following.

\begin{ex}\label{ex:graph}
    Let $G$ be the graph
    \begin{center}
        \includegraphics[scale=.8]{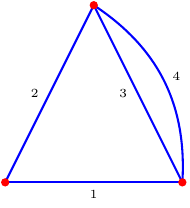}
    \end{center}
    The graphic matroid $\MM = \MM(G)$ has ground
    set $E = \{1,2,3,4\}$, $\rk(\MM)=2$, and bases 
    \[
    \BB(G) \ = \ \{ 12, 13, 14, 23, 24 \}.
    \]
\end{ex}

The \Defn{dual matroid} $\MM^*$ of the matroid $\MM=(E,\BB)$ is the matroid
defined by the pair $(E,\BB^*)$ where $\BB^*=\{E\setminus B \; : \; B\in \BB
\} $. A \Defn{coloop} of $\MM$ is an element which is a loop of $\MM^*$.
Equivalently it is an element which appears in every basis of $\MM$.\\ 
If $e \in E$ is not a coloop, we define the \Defn{deletion} as $\MM\setminus e
:= (E \setminus e, \{ B \in \BB : e \not\in B\})$.  If $e$ is a coloop, then
the bases of $\MM\setminus e$ are $\{ B \setminus e : B \in \BB\}$.  Dually, if
$e \in E$ is not a loop, we define the \Defn{contraction} as $\MM/e := (E
\setminus e, \{ B\setminus e : e \in B \in \BB\})$. These operations can be
extended to subsets $X \subseteq E$ and we write $\MM\setminus X$ and $\MM/ X$,
respectively. We also define the \Defn{restriction} of $\MM$ to a subset $X
\subseteq E$ as $\MM|_X := \MM\setminus(E \backslash X)$. Note that $(\MM \setminus
X)^* = \MM^* / X$. A \Defn{minor} of $\MM$ is a matroid obtained from $\MM$ by
a sequence of deletion and contraction operations. The subclass of graphic
matroids is closed under taking minors but not under taking duals.

To each matroid we associate a point configuration representing the set of
bases. For a fixed ground set $E$ let us write $\Char{X} \in \{0,1\}^E$ for the
characteristic vector of $X \subseteq E$.

\begin{defi}\label{dfn:base_conf}
    Let $\MM = (E,\BB)$ be a matroid. The \Defn{base configuration} of $\MM$ is
    the point configuration
    \[
    V_{\MM} \ := \ \{ \Char{B} : B \in \BB\} \ \subset \ \R^E.
    \]
    The \Defn{base polytope} of $\MM$ is $P_\MM := \conv(V_\MM)$.

\end{defi}

The dual $\MM^*$ is obtained by taking the complements of bases. The
corresponding base configuration is thus 
\begin{equation}\label{eqn:V-dual}
    V_{\MM^*} \ = \ \1 - V_\MM.
\end{equation}
In particular, $V_\MM$ and $V_{\MM^*}$ are related by an affine transformation
and from Proposition~\ref{prop:TH-aff} we deduce the following fact.
\begin{cor}
    For every matroid $\MM$, $V_\MM$ and $V_{\MM^*}$ have the same Theta rank
    and levelness.
\end{cor}

For a point configuration $V \subset \R^m$, we define $\dim V$ to be the
dimension of the affine hull of $V$.  Then $V_\MM$ is not a full-dimensional
point configuration in $\R^E$.  Indeed, $V_\MM$ is contained in the hyperplane
$\sum_{e \in E} x_e = \rk(E)$.  In order to determine the dimension of $V_\MM$
we need to consider the relations among elements of $E$: $e_1,e_2\in E$ are
related if there exists a circuit of $\MM$ containing both. This is an
equivalence relation and the equivalence classes are called the
\Defn{connected components} of $\MM$. Let us write $c(\MM)$ for the number of
connected components. The matroid $\MM$ is \Defn{connected} if $c(\MM)=1$. 
The following basic result due to Tutte will be indispensible throughout this
paper.

\begin{lemma}[{\cite[Thm.~4.3.1]{Oxley}}]\label{lem:tutte}
    Let $\MM = (E,\mathcal{B})$ be a connected matroid and $e \in E$. Then
    $\MM \setminus e$ or $\MM / e$ is connected.
\end{lemma}

Let $\MM_1$ and $\MM_2$ be matroids with disjoint ground sets $E_1$ and
$E_2$. The collection 
\[
    \BB \ := \ \{ B_1\cup B_2 :  B_1\in \BB(\MM_1), B_2\in \BB(\MM_2)\}.
\]
is the set of bases of a matroid on $E_1 \cup E_2$, called the
\Defn{direct sum} of $\MM_1$ and $\MM_2$ and denoted by $\MM_1 \oplus \MM_2$.
The corresponding base configuration is exactly the Cartesian product
\begin{equation}\label{eqn:direct_sum}
    V_{\MM_1 \oplus \MM_2} \ = \  V_{\MM_1} \times V_{\MM_2}. 
\end{equation}    
If $E_1,\dots,E_r \subseteq E$ are the connected components of $\MM$, then
$\MM = \bigoplus_i \MM|_{E_i}$.  Since the dimension is additive with respect
to taking Cartesian products, showing that $\dim V_{\MM} = |E|-1$ if $\MM$ is
connected proves the following.

\begin{prop}\label{prop:V_dim}
    The smallest affine subspace containing $V_\MM$ is of dimension
    $|E|-c(\MM)$.
\end{prop}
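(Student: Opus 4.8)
The plan is to reduce the statement to the case of a connected matroid and then to determine the affine hull of $V_\MM$ directly. As observed just before the statement, if $E_1,\dots,E_r$ are the connected components of $\MM$ (so $r = c(\MM)$), then $\MM = \bigoplus_i \MM|_{E_i}$, and iterating~\eqref{eqn:direct_sum} gives $V_\MM = V_{\MM|_{E_1}} \times \cdots \times V_{\MM|_{E_r}}$. Since the affine hull of a Cartesian product of point configurations is the product of the individual affine hulls, the dimensions add: $\dim V_\MM = \sum_{i=1}^r \dim V_{\MM|_{E_i}}$. Hence it suffices to prove that every connected matroid $\NN$ satisfies $\dim V_\NN = |E(\NN)| - 1$, since then $\dim V_\MM = \sum_i (|E_i| - 1) = |E| - r = |E| - c(\MM)$.

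So let $\NN$ be connected with ground set $E$ and $V_\NN \subset \R^E$. The upper bound $\dim V_\NN \le |E| - 1$ is immediate, as $V_\NN$ lies in the hyperplane $\{\x : \sum_{e \in E} x_e = \rk(E)\}$. For the reverse inequality I would show that the space of linear functionals constant on $V_\NN$ is exactly one-dimensional, spanned by $\1$; equivalently, that every $a \in \R^E$ for which $\langle a, \Char{B}\rangle$ is independent of $B \in \BB$ is a scalar multiple of $\1$. Fix such an $a$. The basis exchange axiom is the engine: if $B, B' \in \BB$ differ by a single swap, say $B' = (B \setminus x) \cup y$, then $0 = \langle a, \Char{B'}\rangle - \langle a, \Char{B}\rangle = a_y - a_x$, so $a_x = a_y$.

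The key step, and the main obstacle, is to upgrade this to the statement that whenever two elements $e, f$ lie in a common circuit $C$, one has $a_e = a_f$. To see this I would extend the independent set $C \setminus f$ to a basis $B$. Here $e \in C \setminus f \subseteq B$, while $f \notin B$: otherwise $C \subseteq B$ would contradict independence of $B$. Since $C$ is then a circuit contained in $B \cup f$, minimality of circuits forces $C$ to be the (unique) fundamental circuit of $f$ with respect to $B$, and because $e \in C \setminus f$ the fundamental-circuit exchange property yields that $B' := (B \setminus e) \cup f$ is again a basis. Applying the single-swap computation to $B$ and $B'$ gives $a_e = a_f$. Verifying that $C$ is indeed the unique circuit in $B \cup f$ is the delicate point, but it is exactly the standard fundamental-circuit fact.

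Since $\NN$ is connected, every two elements lie in a common circuit, so $a_e = a_f$ for all $e,f \in E$, that is $a \in \R \cdot \1$. Thus the space of functionals constant on $V_\NN$ is one-dimensional, giving $\dim V_\NN = |E| - 1$ and completing the reduction and hence the proof.
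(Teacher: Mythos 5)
Your proof is correct, and it in fact does more than the paper itself. The reduction is identical: the paper's entire argument is the sentence preceding the proposition, which decomposes $\MM = \bigoplus_i \MM|_{E_i}$, invokes~\eqref{eqn:direct_sum} so that the affine hulls (and hence dimensions) multiply along the Cartesian product, and then simply \emph{asserts} the connected case $\dim V_\NN = |E(\NN)|-1$ as a known classical fact, giving no proof of it. You supply that missing core with a self-contained argument: since $\BB \neq \emptyset$, the codimension of the affine hull of $V_\NN$ equals the dimension of the space of linear functionals $a$ with $\langle a, \Char{B}\rangle$ constant over bases; a single exchange $B' = (B\setminus x)\cup y$ forces $a_x = a_y$; and the fundamental-circuit mechanism upgrades this to $a_e = a_f$ whenever $e,f$ share a circuit, since extending $C\setminus f$ to a basis $B$ makes $C$ the unique circuit in $B\cup f$ and the standard exchange property then yields the basis $(B\setminus e)\cup f$. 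The two nonelementary ingredients you lean on are both standard and correctly deployed: the fundamental-circuit uniqueness-plus-exchange fact, and the transitivity of the circuit relation (asserted in the paper when it declares the relation an equivalence), which guarantees that in a connected matroid with $|E|\ge 2$ any two elements do lie in a common circuit; the case $|E|=1$, where $V_\NN$ is a point, is trivially consistent. What the paper's terseness buys is brevity by deferring to classical matroid polytope theory; what your route buys is a complete elementary proof resting only on the basis exchange axiom and the paper's own definition of connectivity.
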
 

For a subset $X \subseteq E$ let us write $\ell_X(\x) = \sum_{e \in X} x_e$.
For $A \subseteq E$ we then have $\ell_X(\Char{A}) = |A \cap X|$. Hence
$\rk_{\MM}(X) = \max_{v \in P_\MM}\ell_X(v)$. For $X \subseteq E$ we define
the supporting hyperplane
\[
H_\MM(X) \ := \ \{ \x \in \R^E : \ell_X(\x) = \rk_\MM(X)\}.
\]
The corresponding faces of $V_\MM$ (or equivalently of $P_\MM$) are easy to describe.

\begin{prop}[\cite{Edmonds70}]\label{prop:X-faces}
    For a matroid $\MM = (E,\BB)$ and a subset $X \subset E$, we have
    \[
    V_\MM \cap H_\MM(X) \ = \ V_{\MM|_X \oplus \MM/X} \ = \ V_{\MM|_X} \times
    V_{\MM/X}.
    \]
\end{prop}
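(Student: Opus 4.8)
The plan is to prove the equality
\[
V_\MM \cap H_\MM(X) \ = \ V_{\MM|_X} \times V_{\MM/X}
\]
by a direct identification of bases, since everything reduces to understanding which bases $B \in \BB$ satisfy $|B \cap X| = \rk_\MM(X)$. First I would observe that for any basis $B$ we have $\ell_X(\Char{B}) = |B \cap X| \le \rk_\MM(X)$, with equality precisely when $\Char{B}$ lies on the supporting hyperplane $H_\MM(X)$; this is the content already recorded before the statement, namely $\rk_\MM(X) = \max_{v \in P_\MM} \ell_X(v)$. So the face $V_\MM \cap H_\MM(X)$ consists exactly of the characteristic vectors of those bases $B$ meeting $X$ in a set of maximal size $\rk_\MM(X)$. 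Call such bases \emph{$X$-maximal}.

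The heart of the argument is the combinatorial claim that the $X$-maximal bases of $\MM$ are precisely the sets of the form $B_1 \cup B_2$ with $B_1$ a basis of $\MM|_X$ and $B_2$ a basis of $\MM/X$, viewed on the disjoint ground sets $X$ and $E \setminus X$. For the easy inclusion, I would take such $B_1, B_2$ and check that $B = B_1 \cup B_2$ is a basis of $\MM$ with $B \cap X = B_1$, so $|B \cap X| = \rk(\MM|_X) = \rk_\MM(X)$; here one uses that $B_1$ is independent in $\MM$ spanning $X$ and that $B_2$ extends it to a basis exactly when $B_2$ is a basis of the contraction $\MM/X$. For the reverse inclusion, given an $X$-maximal basis $B$, I would set $B_1 := B \cap X$ and $B_2 := B \setminus X$; since $|B_1| = \rk_\MM(X)$ and $B_1 \subseteq X$ is independent, $B_1$ is a basis of the restriction $\MM|_X$, and by the definition of contraction $B_2 = B \setminus B_1$ is a basis of $\MM/X$. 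Once this bijection between $X$-maximal bases and pairs $(B_1,B_2)$ is established, the first equality $V_\MM \cap H_\MM(X) = V_{\MM|_X \oplus \MM/X}$ follows from the definition of the direct sum, and the second equality is exactly~\eqref{eqn:direct_sum}.

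The main obstacle is the careful handling of the contraction: one must verify that for an independent set $B_1 \subseteq X$ of size $\rk_\MM(X)$ (so $B_1$ spans $X$ in $\MM$), a set $B_2 \subseteq E \setminus X$ is a basis of $\MM/X$ if and only if $B_1 \cup B_2$ is a basis of $\MM$. This is a standard fact about contractions, since the independent sets of $\MM/X$ are exactly the sets $I \subseteq E \setminus X$ with $I \cup B_1$ independent in $\MM$, and a maximal such $I$ together with $B_1$ forms a basis of $\MM$ of size $\rk_\MM(X) + \rk(\MM/X) = \rk(\MM)$. Because this proposition is attributed to Edmonds and is classical, I would keep the argument brief, invoking the defining properties of $\MM|_X$ and $\MM/X$ rather than re-deriving them, and simply note that the rank additivity $\rk_\MM(X) + \rk(\MM/X) = \rk(\MM)$ guarantees that the $X$-maximal bases have the right total size.
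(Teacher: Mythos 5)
Your proof is correct: reducing the face $V_\MM \cap H_\MM(X)$ to the $X$-maximal bases and then exhibiting the bijection $B \mapsto (B \cap X,\, B \setminus X)$ with pairs of bases of $\MM|_X$ and $\MM/X$, via the rank additivity $\rk_\MM(X) + \rk(\MM/X) = \rk(\MM)$ and the characterization of independence in the contraction relative to a basis of the restriction, is exactly the standard argument for this classical fact. The paper itself gives no proof (it cites Edmonds), so there is no in-paper argument to compare against; your route is the expected one and fills the citation in correctly.
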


Let us illustrate this on our running example.

\begin{ex}[continued]
    The graph given in Example~\ref{ex:graph} yields a connected matroid on $4$ elements
    and hence a $3$-dimensional base configuration. The corresponding base
    polytopes is this:
    \begin{center}
        \includegraphics[scale=1]{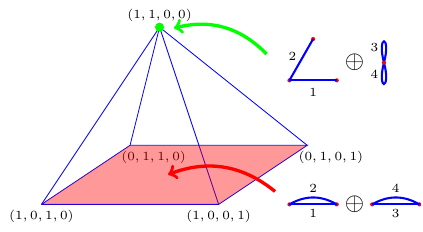}
    \end{center}
    The $5$ bases correspond to the vertices of $P_\MM$.  The set $X =
    \{3,4\}$ is of rank $1$ and the hyperplane corresponding to $x_3 + x_4 =
    1$ supports $P_\MM$ in the quadrilateral facet shown. As indicated, its
    vertices correspond exactly to the bases of $\MG|_{\{3,4\}}\times
    \MG/\{3,4  \}$. Likewise, the set $Y = \{1,2\}$ has rank $2$ and $x_1 +
    x_2 = 2$ supports $P_\MM$ in a vertex, which is the matroid base polytope
    of $\MG|_{\{1,2\}}\times \MG/\{1,2  \}$.
\end{ex}

We define the following families of matroids:
\begin{align*}
    \MLevels_{ k} &\ := \ \{ \MM \text{ matroid} : \Lev(V_\MM) \le k \},
    \text{and}\\
    \MTheta_{ k} &\ := \ \{ \MM \text{ matroid} : \Th(V_\MM) \le k \}.
\end{align*}
We will say that a matroid $\MM$ is of Theta rank or level $k$ if the
corresponding base configuration $V_\MM$ is. Now combining
Proposition~\ref{prop:X-faces} with Lemma~\ref{lem:face-hed} proves the main
theorem of this section.

\begin{thm}\label{thm:minor_closed}
    The classes $\MTheta_{ k}$ and $\MLevels_{ k}$ are closed under
    taking minors.
\end{thm}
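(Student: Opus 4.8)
The plan is to reduce the claim about minors to the face-hereditary property already established in Lemma~\ref{lem:face-hed}. A minor is obtained by a sequence of single deletions and contractions, so by induction it suffices to show that $\MTheta_k$ and $\MLevels_k$ are each closed under one deletion $\MM \setminus e$ and one contraction $\MM/e$. My first move is therefore to realize both operations geometrically on the base configuration, and the natural tool is Proposition~\ref{prop:X-faces}: for a subset $X \subseteq E$ we have $V_\MM \cap H_\MM(X) = V_{\MM|_X} \times V_{\MM/X}$, exhibiting the combined restriction-and-contraction as a \emph{face} of $V_\MM$.

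The key steps are then as follows. First I would specialize Proposition~\ref{prop:X-faces} to the single-element case. Taking $X = E \setminus e$ gives the face $V_{\MM|_{E\setminus e}} \times V_{\MM/(E\setminus e)} = V_{\MM \setminus e} \times V_{\MM/(E\setminus e)}$, and taking $X = \{e\}$ gives $V_{\MM|_{\{e\}}} \times V_{\MM/e}$; in each case one of the two matroids on the ground set $\{e\}$ (or its complement singleton) is a uniform matroid on one element, whose base configuration is a single point, so the Cartesian product is affinely isomorphic to $V_{\MM\setminus e}$, respectively $V_{\MM/e}$. Thus both $V_{\MM\setminus e}$ and $V_{\MM/e}$ appear, up to the extra point factor, as faces $V_\MM \cap H_\MM(X)$. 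Second, since $H_\MM(X)$ is a supporting hyperplane of $V_\MM$ (by the remarks preceding Proposition~\ref{prop:X-faces}, where $\rk_\MM(X) = \max_{v \in P_\MM}\ell_X(v)$), Lemma~\ref{lem:face-hed} applies: the face of a configuration in $\VTheta_k$ (respectively $\VLevels_k$) again lies in that class. Third, I invoke Proposition~\ref{prop:TH-product}, which shows $\Th$ and $\Lev$ of a Cartesian product equal the maximum over the factors; since the extra factor is a single point with Theta rank and levelness at most that of the whole, passing to the genuine minor $V_{\MM\setminus e}$ or $V_{\MM/e}$ can only decrease (and certainly not increase) the invariant. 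Hence $\MM \setminus e$ and $\MM/e$ remain in $\MTheta_k$, respectively $\MLevels_k$, and induction on the number of deletion/contraction steps completes the argument.

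The step requiring the most care is the first: correctly identifying which face of $V_\MM$ yields the deletion and which yields the contraction, and confirming that the spurious single-point factor does not inflate the invariant. Here one must be attentive to boundary cases, namely when $e$ is a loop or a coloop, since then $H_\MM(X)$ may fail to cut out a proper face or the face may be all of $V_\MM$; however, a loop or coloop contributes only a degenerate coordinate that is constant on $V_\MM$, so the base configuration of the minor is affinely isomorphic to $V_\MM$ itself (up to forgetting that coordinate), and Proposition~\ref{prop:TH-aff} guarantees the invariants are unchanged. Once these degenerate cases are dispatched, the remaining argument is a direct composition of the three earlier results.
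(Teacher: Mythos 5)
Your proof is correct and follows essentially the same route as the paper: the paper's own (two-sentence) argument likewise applies Proposition~\ref{prop:X-faces} to one-element sets to realize each minor's base configuration, up to a trivial factor, as a face of $V_\MM$, and then concludes via Lemma~\ref{lem:face-hed}. Your explicit handling of the single-point factor (via Proposition~\ref{prop:TH-product}) and of the loop/coloop degeneracies merely spells out details the paper leaves implicit.
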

\begin{proof}
    By definition, every minor $\NN$ of $\MM$ can be obtained by a sequence of
    restrictions and contractions. By repeatedly using
    Proposition~\ref{prop:X-faces}, we infer that $V_\NN$ is a face of $V_\MM$
    and Lemma~\ref{lem:face-hed} assures us that $\Th(V_\NN) \le \Th(V_\MM)$
    and $\Lev(V_\NN) \le \Lev(V_\MM)$.
\end{proof}

Let us analogously define the classes $\GTheta_{ k}$ and $\GLevels_{ k}$ of
graphic matroids of Theta rank and levelness bounded by $k$. These are also
closed under taking minors and the Robertson--Seymour's theorem
(\cite{RobertsonSeymour}) asserts that there is a finite list of excluded
minors characterizing each class.

In the remainder of the section we will recall the facet-defining hyperplanes
of $V_\MM$ which will also show that \emph{all} faces of $V_\MM$ correspond to
direct sums of minors. The facial structure of $V_\MM$ has been of interest
originally in combinatorial optimization~\cite{Edmonds70} (see
also~\cite[Ch.~40]{Schrijver}) and later in geometric
combinatorics and tropical geometry~\cite{Ardila,Sturmfels,Kim}. 

\begin{thm}\label{thm:matroidfacets}
    Let $\MM = (E,\BB)$ be a connected matroid. For every facet $U \subset
    V_\MM$ there is a unique $ \emptyset \neq S \subset E$ such that $U =
    V_\MM \cap H_\MM(S)$. Conversely, a subset $ \emptyset \neq S \subset E$
    gives rise to a facet if and only if one of the following conditions hold
    \begin{enumerate}[\rm (i)]
        \item $S$ is a flat such that $M|_S$ as well as $M/S$ are connected;
        \item $S = E \setminus e$ for some $e \in E$ such that $M|_S$ as well as $M/S$ are connected.
    \end{enumerate}
\end{thm}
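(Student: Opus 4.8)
The plan is to read off the facets from the inequality description of $P_\MM$ and then translate the two defining conditions into matroid language using Proposition~\ref{prop:X-faces} and the dimension formula of Proposition~\ref{prop:V_dim}. Recall Edmonds' description (\cite{Edmonds70}; see also \cite[Ch.~40]{Schrijver})
\[
    P_\MM \ = \ \{\, \x \in \R^E : \x \ge \0,\ \ell_E(\x) = \rk(E),\ \ell_S(\x) \le \rk_\MM(S)\ \text{for all}\ S \subseteq E \,\}.
\]
Every facet of $P_\MM$ (equivalently of $V_\MM$) is cut out by one of these inequalities. On the affine hull $A := \{\x : \ell_E(\x) = \rk(E)\}$ the constraint $x_e \ge 0$ reads $\ell_{E\setminus e}(\x) \le \rk_\MM(E\setminus e)$, since $\MM$ is connected and hence $e$ is not a coloop, so $\rk_\MM(E\setminus e) = \rk(E)$. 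Thus every facet $U$ equals $V_\MM \cap H_\MM(S)$ for some $\emptyset \neq S \subsetneq E$, which is the existence part of the first assertion.

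Next I would settle the dimension. By Proposition~\ref{prop:X-faces}, $U = V_{\MM|_S} \times V_{\MM/S}$, and Proposition~\ref{prop:V_dim} gives $\dim U = |E| - c(\MM|_S) - c(\MM/S)$. As $\dim P_\MM = |E|-1$, the face $U$ is a facet if and only if $c(\MM|_S) + c(\MM/S) = 2$, i.e.\ both $\MM|_S$ and $\MM/S$ are connected. Read in reverse, this already proves the ``if'' direction: whenever $S$ is as in (i) or (ii) the set $V_\MM \cap H_\MM(S)$ is a proper face of dimension $|E|-2$, hence a facet.

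The heart of the argument is to see why exactly the two families (i) and (ii) occur. A set $S$ fails to be a flat precisely when some $f \in E\setminus S$ satisfies $\rk_\MM(S\cup f) = \rk_\MM(S)$, and such an $f$ is exactly a loop of $\MM/S$. Since a loop is always a connected component on its own, a connected $\MM/S$ is loopless unless it consists of a single loop; hence either $\MM/S$ is loopless and $S$ is a flat (case (i)), or $|E\setminus S| = 1$ and $S = E\setminus e$ (case (ii)). These cases are disjoint: in a connected matroid $e$ is never a coloop, so $\rk_\MM((E\setminus e)\cup e) = \rk_\MM(E\setminus e)$ and $E\setminus e$ is never a flat. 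This loop argument, linking flatness of $S$ with connectivity of $\MM/S$, is the step I expect to require the most care.

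It remains to prove uniqueness of $S$, which also completes the first assertion. A facet determines its supporting hyperplane inside $A$ uniquely, so for any two representations $U = V_\MM \cap H_\MM(S_1) = V_\MM \cap H_\MM(S_2)$ the outer normals agree up to positive scaling modulo the normal $\1$ of $A$; that is, $\Char{S_1} = \mu\,\Char{S_2} + \nu\,\1$ with $\mu > 0$. A short inspection of the coordinates, each of which is $0$ or $1$, shows that this is impossible unless $\nu = 0$, $\mu = 1$, and $S_1 = S_2$ (the cases $S_1,S_2 \in \{\emptyset, E\}$ being excluded). Besides the loop dichotomy, the main bookkeeping obstacle is keeping the polyhedral and matroidal descriptions aligned, in particular the reduction of the nonnegativity facets $x_e \ge 0$ to the sets $E\setminus e$.
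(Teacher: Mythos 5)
The paper itself gives no proof of this theorem: it is recalled from the literature (Edmonds~\cite{Edmonds70}, see also \cite{Schrijver} and the flacet characterization in \cite{Sturmfels}), so there is no internal argument to compare against. Your proof is correct and is essentially the standard one, assembled from ingredients the paper does make available: Edmonds' inequality description, the identification $V_\MM \cap H_\MM(S) = V_{\MM|_S} \times V_{\MM/S}$ (Proposition~\ref{prop:X-faces}, which indeed holds for arbitrary $S$, not just flats), and the dimension formula of Proposition~\ref{prop:V_dim}, giving ``facet $\Leftrightarrow c(\MM|_S)+c(\MM/S)=2$''; the dichotomy between (i) and (ii) via the observation that a loop of $\MM/S$ is its own connected component is exactly the right mechanism, and the uniqueness argument via normal vectors $\Char{S_1} = \mu\,\Char{S_2} + \nu\,\1$ with a $0/1$-coordinate inspection is airtight because $S_1,S_2$ are proper and nonempty. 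Two small points deserve explicit mention rather than being left implicit. First, your reduction of $x_e \ge 0$ to $\ell_{E\setminus e}(\x) \le \rk_\MM(E\setminus e)$ and the absence of loops both need $|E|\ge 2$: a connected matroid on one element may be a coloop, but then $P_\MM$ is a point and the statement is vacuous, so you should flag this edge case. Second, your case (ii) relies on the convention that the one-element matroid consisting of a single loop is connected (so that $c(\MM/(E\setminus e))=1$ in the dimension count); this is consistent with the paper's definition of connected components via circuits, since a loop is itself a circuit, but since the whole dichotomy hinges on it, one sentence making the convention explicit would make the proof fully self-contained. With those two remarks added, the argument is complete and supplies precisely the proof the paper delegates to its references.
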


In~\cite{Sturmfels} the subsets $S$ in (i) were called \Defn{flacets} and we
stick to this name. In our study of the Theta rank and the levelness of base
configurations, the following asserts that we will only need to consider
flacets. For brevity, a \Defn{$k$-level flacet} $F$ of a matroid $\MM$ refers
to flacet of $\MM$ whose facet-defining affine function $\ell_F(\x)$ for
$V_\MM$ is $k$-level.

\begin{prop}\label{prop:only_flacets}
    Let $\MM$ be a connected matroid and $S = E \setminus e$. Then
    $\ell_S(\x)$ takes $2$ values on $V_\MM$ and hence is $1$-sos.
\end{prop}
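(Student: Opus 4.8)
The plan is to evaluate $\ell_S$ directly on the base configuration and read off its values. First I would use that every basis $B\in\BB$ has the same cardinality $k:=\rk_\MM(E)$, so for $S=E\setminus e$ we get
\[
\ell_S(\Char{B}) \;=\; |B\cap S| \;=\; |B| - |B\cap\{e\}| \;=\; k - |B\cap\{e\}|,
\]
and since $|B\cap\{e\}|\in\{0,1\}$ this value is either $k$ (when $e\notin B$) or $k-1$ (when $e\in B$). Hence $\ell_S$ takes at most the two values $k-1$ and $k$ on $V_\MM$.

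To see that both values are actually attained, so that the facet is genuinely $2$-level, I would argue that in a connected matroid on at least two elements $e$ is neither a loop nor a coloop. A coloop lies in no circuit, and a loop lies in no circuit together with a second element (otherwise $\{e\}$ would be a proper dependent subset of that circuit, contradicting minimality); in either case $\{e\}$ would be a connected component by itself. Connectivity of $\MM$ together with $|E|\ge 2$ excludes this, so there is a basis containing $e$ and a basis avoiding $e$, realizing the values $k-1$ and $k$ respectively.

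For the sos statement I would exhibit the single square explicitly. The non-negative facet-defining function attached to $H_\MM(S)$ is $\rk_\MM(S) - \ell_S(\x)$; since $e$ is not a coloop we have $\rk_\MM(S)=\rk_\MM(E\setminus e)=\rk_\MM(E)=k$. Restricted to $V_\MM$ this function sends $\Char{B}\mapsto k-\ell_S(\Char{B}) = |B\cap\{e\}| = (\Char{B})_e$, i.e.\ it agrees with the coordinate polynomial $x_e$ on $V_\MM$. Because $V_\MM\subseteq\{0,1\}^E$ we have $x_e = x_e^2$ as functions on $V_\MM$, so
\[
\rk_\MM(S)-\ell_S(\x) \;=\; x_e^2 \qquad\text{on } V_\MM,
\]
a sum of a single square of a degree-$1$ polynomial. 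This is exactly a $1$-sos certificate in the sense of~\eqref{eqn:k-sos}, which completes the argument.

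The computation is entirely routine; the only point that genuinely uses the hypotheses is the claim that both values $k-1$ and $k$ occur, equivalently that $e$ is neither a loop nor a coloop. This is where connectedness of $\MM$ (and $|E|\ge 2$) enters, and it is the single step I would take care to justify.
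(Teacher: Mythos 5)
Your proof is correct and takes essentially the same route as the paper: both reduce $\ell_S$ on $V_\MM$ to $r - x_e$ using the fact that the coordinate sum is constant (equal to $\rk(\MM)$) over all bases, from which the two values $r$ and $r-1$ are immediate. You additionally spell out two points the paper leaves implicit---that connectedness (with $|E|\ge 2$) forces $e$ to be neither a loop nor a coloop, so both values are genuinely attained, and the explicit certificate $\rk_\MM(S)-\ell_S(\x) = x_e^2$ on $V_\MM$---both of which are sound.
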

\begin{proof}
    For any basis $B \in \BB(\MM)$, we have $\ell_S(\1_B) = |S \cap B|$.
    Since every basis has the same cardinality, it follows that $\ell_S(\x)$
    takes only two distinct values.
\end{proof}

\begin{ex}
    The facets of the running example are four triangles and one square. The
    four triangles correspond to the two sets $\{1,2,4 \}$, $\{1,2,3\}$ of
    cardinality $|E|-1$ and the two flacets $\{2\}$, $\{1\}$, while the square
    corresponds to the flacet $\{3,4\}$. We have already described in the
    previous example the square facet. In the picture we highlight two
    triangular facets, the first one (green) corresponding to the flacet
    $\{1\}$, the second one (red) to the set $\{1,2,4\}$. The bases contained
    in the green triangle are exactly pairs of spanning trees in 
    shown deletion and contraction of $G$.
\begin{center}
\includegraphics[scale=1]{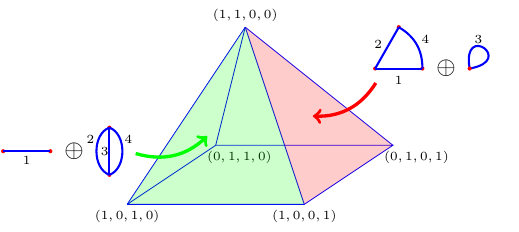}
\end{center}
\end{ex}

A seemingly trivial but useful class of matroids is given by the \Defn{uniform
matroids} $U_{n,k}$ for $0 \le k \le n$ given on ground set  $E = \{ 1,\dots,
n\}$ and bases $\BB(U_{n,k}) = \{ B \subseteq E : |B| = k \}$.

\begin{prop}\label{prop:TH-uniform}
    Uniform matroids are $2$-level and hence have Theta rank $1$.
\end{prop}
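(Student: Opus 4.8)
The plan is to establish $2$-levelness of $V_{U_{n,k}}$ directly and then quote Theorem~\ref{thm:GPT} to obtain Theta rank $1$. First I would dispose of the boundary cases $k \in \{0,n\}$: here $U_{n,k}$ has the single basis $\emptyset$ respectively $E$, so $V_{U_{n,k}}$ is a single point, which has no facets and hence Theta rank $0$. Assume therefore $0 < k < n$, so that $U_{n,k}$ is connected and $V_{U_{n,k}} = \{\Char{B} : B \in \binom{E}{k}\}$ is the set of all $0/1$-vectors with exactly $k$ ones.

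Next I would enumerate the facet-defining functions using the facet characterization recalled above together with Proposition~\ref{prop:only_flacets}. Every function $\ell_S$ with $S = E\setminus e$ is $2$-level by Proposition~\ref{prop:only_flacets}, so it remains to treat the flacets. Since $\rk(X) = \min(|X|,k)$ in $U_{n,k}$, a proper subset $S \subsetneq E$ is a flat exactly when $|S| < k$; for such $S$ the restriction $U_{n,k}|_S = U_{|S|,|S|}$ consists entirely of coloops and is therefore connected only when $|S| \le 1$. Hence the only flacets are the singletons $S = \{e\}$, which arise once $k \ge 2$, with $U_{n,k}/e = U_{n-1,k-1}$ connected because $1 < k < n$.

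Finally, for a flacet $S = \{e\}$ the associated function $\ell_S(\x) = x_e$ takes only the values $0$ and $1$ on the $0/1$-configuration $V_{U_{n,k}}$, i.e. exactly $2$ distinct values. Together with the two-valued functions coming from the sets $E\setminus e$, every facet-defining function takes at most $2$ values on $V_{U_{n,k}}$, so $V_{U_{n,k}}$ is $2$-level, and Theorem~\ref{thm:GPT} then yields $\Th(V_{U_{n,k}}) = 1$. \textbf{The only delicate point} is the flacet bookkeeping, namely verifying that the singletons really are flacets and that no larger flat qualifies. One can sidestep this entirely by recalling that the base polytope here is the hypersimplex $\{\x \in [0,1]^E : \sum_{e} x_e = k\}$, whose facets are cut out by the coordinate inequalities $0 \le x_e \le 1$; since each coordinate $x_e$ takes exactly the values $0$ and $1$ on $V_{U_{n,k}}$, this gives $2$-levelness with no case analysis at all.
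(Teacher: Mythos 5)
Your proof is correct, and its main body takes a genuinely different route from the paper's, although your closing ``sidestep'' remark is precisely the paper's own proof. The paper argues in two lines from the classical inequality description of the hypersimplex: $P_{U_{n,k}} = \bigl\{ \x \in \R^E : 0 \le x_e \le 1, \ \sum_e x_e = k \bigr\}$, so the facet-defining linear functions are among $\pm x_e$, which take only the values $0$ and $1$ on $0/1$-points. You instead invoke the matroid-theoretic facet characterization and do the flacet bookkeeping explicitly, and your computations check out: since $\rk(X) = \min(|X|,k)$, the proper flats of $U_{n,k}$ are exactly the sets $S$ with $|S| < k$; for such $S$ the restriction is $U_{|S|,|S|}$, disconnected unless $|S| \le 1$; and for a singleton the contraction $U_{n-1,k-1}$ is connected exactly when $k \ge 2$, so the singletons are the only flacets. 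Combined with Proposition~\ref{prop:only_flacets} for the sets $E \setminus e$, this covers every facet, and each facet-defining function takes at most two values. Your separate treatment of $k \in \{0,n\}$ (where $V_{U_{n,k}}$ is a point and the facet theorem, stated for connected matroids, does not apply) is a point the paper's proof silently skips. As for what each approach buys: the paper's argument is shorter and requires no connectivity or flat analysis, but it presupposes the inequality description of the hypersimplex; your enumeration derives the facets from matroid data alone and is a faithful warm-up for the flacet analyses that drive the rest of the paper (compare Propositions~\ref{prop:K4} and~\ref{prop:minor-list}).
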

\begin{proof}
    The base polytope of $U_{n,k}$ is also known as the
    \emph{$(n,k)$-hypersimplex} and is given by
    \[
        P_{U_{n,k}} \ = \ \conv \{ \1_B : B \subseteq E, |B| = k\} \ = \ \Bigl\{ \x
        \in \R^E : 0 \le x_e \le 1, \sum_e x_e = k \Bigr\}.
    \]
    The facet-defining functions are among the functions $\{
    \pm \ell_{\{e\}}(\x) = \pm x_e : e \in E \}$ 
    which can take only two different values on $0/1$-points.
\end{proof}

\section{$2$-level matroids}
\label{sec:2level}

In this section we investigate the excluded minors for the class of $2$-level
matroids and, by Theorem~\ref{thm:GPT}, equivalently the matroids of Theta
rank $1$. In this case we can give the complete and in particular finite list
of forbidden minors. We start by showing that matroids with few elements and
of small rank are always $2$-level. By Proposition~\ref{prop:only_flacets} we
only need to inspect the levelness of flacets of a matroid.

\begin{prop}\label{prop:smallmatroids}
    Let $\MM=(E,\BB)$ be a matroid.
    If $\rk(\MM) \le 2$ or $|E| \le 5$, then $\MM$ is $2$-level.
\end{prop}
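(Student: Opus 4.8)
The plan is to prove the two conditions separately, in each case reducing to a statement about facet-defining linear functions and using Proposition~\ref{prop:only_flacets} together with Theorem~\ref{thm:GPT} (Theta rank~$1$ $\Leftrightarrow$ $2$-level). By Theorem~\ref{thm:GPT} it suffices to show that every facet-defining linear function $\ell(\x)$ for $V_\MM$ takes exactly two distinct values on $V_\MM$. Moreover, by Proposition~\ref{prop:V_dim} and equation~\eqref{eqn:direct_sum}, levelness is invariant under direct sums (since $\Lev(V_1 \times V_2) = \max(\Lev(V_1),\Lev(V_2))$ by Proposition~\ref{prop:TH-product}), so I may assume $\MM$ is connected; otherwise $\MM$ decomposes as a direct sum of strictly smaller connected matroids and I can induct. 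Loops and coloops contribute a fixed coordinate to every base, so the configuration is an affine copy of the configuration obtained by deleting them; thus I may also assume $\MM$ is loopless and coloopless.

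\textbf{The rank~$\le 2$ case.} If $\rk(\MM) \le 1$, then every base is a single element (or empty), so $V_\MM$ is a standard basis configuration (a single point or a set of unit vectors), which is trivially $2$-level. So assume $\rk(\MM)=2$ and $\MM$ connected. By the facet theorem, facets correspond either to sets $S = E\setminus e$ or to flacets $S$. The former are handled directly by Proposition~\ref{prop:only_flacets}, which shows $\ell_S$ takes exactly two values. For a flacet $S$ with $\MM|_S$ connected of rank $< 2$, connectedness forces $\rk_\MM(S)=1$, so $S$ is a (rank-one) flat, i.e.\ a parallel class; then $\ell_S(\Char{B}) = |B\cap S| \in \{0,1\}$ for every base $B$ (a rank-$2$ matroid cannot contain two parallel elements in one base), giving exactly two values. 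Hence every facet is $2$-level and $\MM$ is $2$-level.

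\textbf{The $|E|\le 5$ case.} Here I again reduce to $\MM$ connected, simple, loopless and coloopless, and exploit duality via equation~\eqref{eqn:V-dual}: since $V_{\MM^*}=\1-V_\MM$ is an affine image of $V_\MM$, Proposition~\ref{prop:TH-aff} gives $\Lev(V_{\MM^*})=\Lev(V_\MM)$, so I may replace $\MM$ by $\MM^*$ and assume $\rk(\MM)\le \lfloor |E|/2\rfloor \le 2$ whenever $|E|\le 5$. For $|E|\le 4$ this immediately yields $\rk(\MM)\le 2$, reducing to the previous case; for $|E|=5$ it gives $\rk(\MM)\le 2$ as well, again reducing to the rank~$\le 2$ case. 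Thus in all cases $|E|\le 5$ the matroid (or its dual) has rank at most~$2$ and is therefore $2$-level.

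\textbf{Main obstacle.} The delicate point is the flacet analysis in the rank-$2$ argument: I must confirm that every facet-defining function arising from a flat $S$ takes only two values, which hinges on the structural fact that a rank-$2$ connected matroid has its rank-$1$ flats as parallel classes and that no base can meet such a class in more than one element. Verifying that the reduction by loops, coloops, and duality genuinely preserves $2$-levelness (rather than merely not increasing it) — and that the rank bound $\rk(\MM)\le 2$ really follows in every case $|E|\le 5$ after passing to the dual — is the step requiring the most care, but it is a finite and routine case check once the rank-$\le 2$ result is in hand.
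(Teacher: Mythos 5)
Your proposal is correct and follows essentially the same route as the paper: for rank $\le 2$ you observe that every proper flacet is a rank-$1$ flat so $\ell_F$ takes only the values $\{0,1\}$, and for $|E|\le 5$ you pass to the dual via~\eqref{eqn:V-dual} and Proposition~\ref{prop:TH-aff} to reduce to the rank-$\le 2$ case. The extra reductions you perform (connectedness via Proposition~\ref{prop:TH-product}, removal of loops and coloops) are sound but not needed in the paper's more compact version, which otherwise matches your argument step for step.
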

\begin{proof}
The case $\rk(\MM)=1$ is trivial since there is no proper flacet. On the other
hand, if $\rk(\MM)=2$ the proper flacets are necessarily flacets of rank $1$.
The linear function $\ell_F(\x)$ for any such flacet $F$ only takes values in
$\{0,1\}$ and thus is $2$-level.
By~\eqref{eqn:V-dual} and Proposition~\ref{prop:TH-aff}, $\MM$ and $\MM^*$
have the same Theta rank and levelness. If $|E| \le 5$, then either $\MM$ or
$\MM^*$ is of rank $\le 2$.
\end{proof}

A first example of a matroid of levelness $\ge 3$ is given by the graphic
matroid associated to the complete graph $K_4$.
\begin{prop}\label{prop:K4}
    The graphic matroid $\MM(K_4)$ is $3$-level.
\end{prop}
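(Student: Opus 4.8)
The plan is to compute the levelness of $V_{\MM(K_4)}$ directly from the facet description, showing that every facet-defining function takes at most three values while some takes exactly three. First I would record the basic data: since $K_4$ is $2$-connected, $\MM = \MM(K_4)$ is a connected matroid of rank $3$ on the six edges, and its base configuration $V_\MM \subset \R^6$ consists of the sixteen spanning trees of $K_4$, each a $0/1$-vector with exactly three ones. By the facet classification stated above, every facet of $V_\MM$ arises either from a flacet (case (i)) or from a set $S = E \setminus e$ (case (ii)); by Proposition~\ref{prop:only_flacets} the latter are $2$-level. Hence it suffices to analyse the flacets.

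Next I would enumerate the flacets. For the complete graph $K_4$ a flat of $\MM$ is precisely the edge set $S_\pi$ determined by a partition $\pi$ of the vertex set $\{1,2,3,4\}$, namely the set of all edges joining two vertices in the same block; its rank equals $4$ minus the number of blocks. The proper nonempty flats are therefore the six single edges (rank $1$), the four triangles $\MM(K_3)$ coming from a partition into a triple and a singleton (rank $2$), and the three perfect matchings coming from a partition into two pairs (rank $2$). Checking the flacet condition that $\MM|_S$ and $\MM/S$ be connected: a single edge gives $\MM|_S \cong U_{1,1}$ and a contraction $\MM/S$ that is the graphic matroid of a $2$-connected multigraph, both connected; a triangle gives $\MM|_S \cong U_{2,3}$ and $\MM/S \cong U_{1,3}$ (three parallel edges), both connected; a matching gives $\MM|_S \cong U_{1,1} \oplus U_{1,1}$, which is disconnected. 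Thus the flacets are exactly the six single edges and the four triangles.

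It then remains to count the values of $\ell_S$ on $V_\MM$ for each flacet. For a single edge $S = \{e\}$, the function $\ell_S(\x) = x_e$ takes only the values $0$ and $1$ on the $0/1$-configuration, so these facets are $2$-level. For a triangle $S$, the value $\ell_S(\Char{B}) = |B \cap S|$ counts the edges of the triangle contained in the spanning tree $B$; since a spanning tree is acyclic it contains at most two of the three triangle edges, while $\rk(S) = 2$ forces the maximum value to be $2$. The value $0$ is attained by the complementary star (the three edges at the vertex outside the triangle), and the values $1$ and $2$ are attained by suitable trees, so $\ell_S$ takes exactly the three values $\{0,1,2\}$ and the triangle facets are $3$-level.

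Combining these observations, every facet-defining linear function of $V_\MM$ takes at most three distinct values, so $V_{\MM(K_4)}$ is $3$-level; the triangle facet takes three values, so it is not $2$-level, and therefore $\Lev(V_{\MM(K_4)}) = 3$. The main obstacle is the flacet enumeration --- matching flats to vertex partitions and correctly verifying matroid-connectivity of the restriction and contraction --- and in particular recognizing that the matching flats fail to be flacets because their restriction splits as a direct sum; once this is in place the level counts are immediate.
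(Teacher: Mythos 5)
Your proof is correct and follows essentially the same route as the paper: exhibit a triangle flacet whose linear function takes the three values $0,1,2$, and bound every other facet-defining function at two or three values. The paper shortcuts your flacet enumeration by simply observing that every proper flacet of $\MM(K_4)$ has rank at most $\rk(\MM(K_4))-1=2$, so its linear function takes at most three values; the only blemish in your write-up is that your uniform-matroid subscripts ($U_{2,3}$, $U_{1,3}$) are flipped relative to the paper's convention $U_{n,k}$ with $n=|E|$, under which these should read $U_{3,2}$ and $U_{3,1}$.
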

\begin{proof}
    Let $F = \{3,4,6\}$ be the flat corresponding to the labelled graph illustrated in Example \ref{ex:K4}.  Both the contraction of $F$ and the restriction to $F$ are
    connected (or biconnected on the level of graphs) and thus $F$ is a flacet
    with $\ell_F(\x) =  x_1+x_2+x_3$. The spanning trees $B_1 = \{1,2,5\}$ and
    $B_2 = \{1,5,6\}$ satisfy $|F \cap B_1| < |F \cap B_2| < \rk(F)$ which
    shows that $\MM(K_4)$ is at least $3$-level.  To see that $\MM(K_4)$ is at
    most $3$-level we notice that every proper flacet $F$ has rank smaller or
    equal than $ \rk(\MM(K_4)) - 1 =2$ and hence $\ell_F(\x)$ can take at most
    three different values.
\end{proof}

Before analyzing other matroids we quickly recall a \Defn{geometric
representation} of certain matroids of rank $3$: The idea is to draw a diagram
in the plane whose points correspond to the elements of the ground set. 
Any subset of $3$ elements constitute a basis unless they are contained in a
depicted line.

\begin{ex}\label{ex:K4}
    Let us consider the graph $K_4$ and its geometric representation as a
    matroid:
\begin{center}
\includegraphics[scale=0.6]{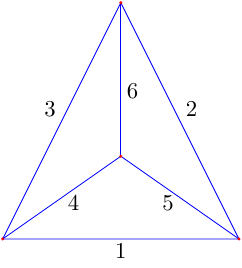} \qquad  \qquad \qquad
\includegraphics[scale=0.7]{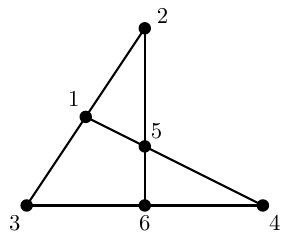}
\end{center}
Thus the geometric representation consists only of the four lines associated
to the $3$-circuits of $K_4$.
\end{ex}

Starting from the geometric representation of $\MM(K_4)$ we define three new matroids by removing one, two or three lines of the representation and we call them respectively $\mathcal{W}^3$, $Q_6$ and $P_6$. None of these matroids is graphic, but we can easily draw their geometric representations:
\begin{center}
\includegraphics[scale=0.6]{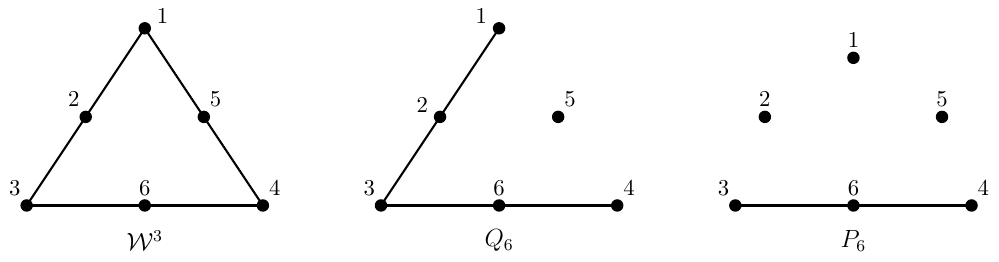}
\end{center}

\begin{prop}\label{prop:minor-list}
    The matroids $\mathcal{W}^3$, $Q_6$, and $P_6$ are $3$-level.
\end{prop}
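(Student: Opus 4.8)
The plan is to separate the two inequalities $\Lev(V_\MM)\le 3$ and $\Lev(V_\MM)\ge 3$, reusing the mechanism from the proof of Proposition~\ref{prop:K4}. The upper bound is in fact automatic for any matroid of rank $3$. Each of $\mathcal{W}^3$, $Q_6$, $P_6$ is a connected matroid of rank $3$ on six elements --- this is visible from the geometric representations --- obtained from $\MM(K_4)$ by declaring one, two, or three of its four three-point lines to be independent. A flacet $F$ is a proper flat, so in rank $3$ it satisfies $\rk_\MM(F)\le 2$; hence $\ell_F(\x)=\sum_{e\in F}x_e$ evaluates to $|B\cap F|\in\{0,1,2\}$ on every basis $B$ and takes at most three values. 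The only other facet-defining functions are those of the form $\ell_{E\setminus e}$, which take two values by Proposition~\ref{prop:only_flacets}. Since the matroids are connected, these exhaust all facets, and each matroid is at most $3$-level.

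For the lower bound I would produce, in each matroid, one flacet on which $\ell_F$ attains all three values $0,1,2$. The crucial observation is that removing lines only turns dependent triples into bases and never destroys an existing basis, so every basis of $\MM(K_4)$ survives. As at most three of the four three-point lines are removed, at least one three-point line $F$ remains a rank-$2$ flat; I would fix such an $F$. It is still a flacet: $\MM|_F\cong U_{2,3}$ is connected, and $\MM/F$ is a rank-$1$ matroid on the three elements off $F$ in which they are mutually parallel, hence connected as well. One also checks that removing the remaining lines does not add a fourth point to the closure of $F$, so $F$ stays a flat.

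With such a surviving line fixed, the three witnessing bases can be taken verbatim from $\MM(K_4)$. In the labeling where $F=\{1,2,3\}$ is a triangle and $\{4,5,6\}$ is the complementary star, the star $\{4,5,6\}$ is a basis disjoint from $F$ (value $0$), a spanning tree such as $\{1,5,6\}$ meets $F$ in a single element (value $1$), and $\{1,2,4\}$ meets $F$ in two elements (value $2$). All three are bases of $\MM(K_4)$ and therefore of the matroid at hand, so $\ell_F$ takes three distinct values and $\Lev(V_\MM)\ge 3$. Together with the upper bound this yields levelness exactly $3$.

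The step I expect to require the most care is not the arithmetic of the three values but verifying uniformly, across all three matroids, that the surviving line is genuinely a flacet once the other lines are removed --- that is, that no extra point enters its closure and that $\MM/F$ stays connected. Since the automorphism group of $\MM(K_4)$ acts transitively on the four three-point lines, I expect this bookkeeping to collapse, up to relabeling, to a single check valid simultaneously for $\mathcal{W}^3$, $Q_6$, and $P_6$.
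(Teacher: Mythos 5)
Your proof is correct and takes essentially the same route as the paper: both fix a surviving three-point line $F=\{1,2,3\}$, verify it is a flacet with $\MM|_F\cong U_{3,2}$ and $\MM/F\cong U_{3,1}$, exhibit three bases meeting $F$ in $0$, $1$, and $2$ elements (the paper uses $\{4,5,6\},\{1,4,6\},\{1,2,6\}$, you use $\{4,5,6\},\{1,5,6\},\{1,2,4\}$), and obtain the upper bound from the rank-$3$ argument of Proposition~\ref{prop:K4}; your relaxation-of-$\MM(K_4)$ framing merely makes the paper's ``easy to check'' step uniform across the three matroids. (One notational slip: in the paper's convention $U_{n,k}$ has $n$ elements and rank $k$, so write $U_{3,2}$, not $U_{2,3}$.)
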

\begin{proof}
    Let $\MM$ be any of the three given matroids and consider $F=\{ 3,4,6 \}$.
    It is easy to check that $\MM|_F \cong U_{3,2}$ and $\MM/F \cong U_{3,1}$
    which marks $F$ as a flacet. The vertices of the matroid polytope
    associated to the bases $\{1,2,5\},\{1,2,4\},\{ 1,3,4 \}$ lie on distinct
    hyperplanes parallel to $H_\MM(F) = \{ \ell_F(\x) = \rk_\MM(F) \}$.
    Therefore the matroids are at least $3$-level. Since $\rk(\MM)=3$, we can
    use the same argument as in the proof of Proposition~\ref{prop:K4}.
\end{proof}

The list of excluded minors for $\MLevels_{2}$ so far includes $\MM(K_4)$,
$\mathcal{W}^3$, $Q_6$, and $P_6$. To show that this list is complete, we will
approach the problem from the constructive side and consider how to
synthesize $2$-level matroids. We already saw that $\MLevels_{2}$ is closed
under taking direct sums. We will now consider three more operations that
retain levelness. Let $\MM_1 = (E_1, \BB_1)$ and $\MM_2 = (E_2, \BB_2)$ be
matroids such that $\{p\} = E_1 \cap E_2$.  We call $p$ a \Defn{base point}.
If $p$ is not a coloop of both, then we define the \Defn{series
connection} $\SeriesConn(\MM_1,\MM_2)$ with respect to $p$ as the matroid on
ground set $E_1 \cup E_2$ and with bases
\[
    \BB \ = \ \{ B_1 \cup B_2 : B_1 \in \BB_1, B_2 \in \BB_2, B_1 \cap B_2 =
    \emptyset \}.
\]
We also define the \Defn{parallel connection} with respect to $p$ as the
matroid $\SeriesConn(\MM_1^*,\MM_2^*)^*$ provided $p$ is not a loop of both.
Notice that $\SeriesConn(\MM_1,\MM_2)$ contains both $\MM_1$ and $\MM_2$ as a
minor.  

The operations of series and parallel connection, introduced by
Brylawski~\cite{Brylawski}, are inspired by the well-known series and parallel
operations on graphs. The following example illustrates the construction in
the graphic case.

\begin{ex}
Let us consider again the two graphic matroids $U_{3,2}$ and $ \MM(K_4)$. Their series connection is the following graph:
\begin{center}
 \includegraphics[scale=0.7]{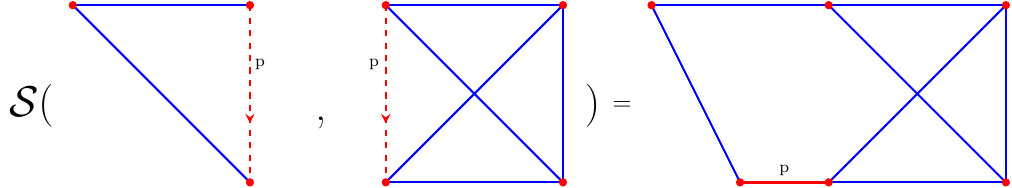}
 \end{center}
\end{ex}

An extensive treatment of these two operations is given
in~\cite[Sect.~7.1]{Oxley}. We focus here on the geometric properties from
which many combinatorial consequences can be deduced. Since $E_1 \cap E_2 =
\{p\}$, we will write $E_1 \uplus E_2 = (E_1 \cup E_2  \cup \{p_1,p_2\})
\setminus \{p\}$ for the \emph{disjoint union} in the following result. Thus,
$p_1$ and $p_2$ corresponds to $p \in E_1$ and $p \in E_2$, respectively.

\begin{lemma}\label{lem:P_series}
    Let $\MM_1 = (E_1,\BB_1)$ and $\MM_2 = (E_2,\BB_2)$  be matroids with
    $\{p\} = E_1 \cap E_2$ not a coloop of both.
    Then the base polytope $P_\SeriesConn$ of the series connection
    $\SeriesConn = \SeriesConn(\MM_1,\MM_2)$ is linearly isomorphic to
    \[
    (P_{\MM_1} \times P_{\MM_2}) \cap \{ \x \in \R^{E_1\uplus E_2} : x_{p_1} +
    x_{p_2} \le 1 \}.
    \]
\end{lemma}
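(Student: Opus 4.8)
The plan is to exhibit an explicit linear map between the two polytopes and to match their vertices. Write $Q := (P_{\MM_1} \times P_{\MM_2}) \cap \{\x \in \R^{E_1 \uplus E_2} : x_{p_1} + x_{p_2} \le 1\}$ and consider the linear \emph{gluing map}
\[
    \pi : \R^{E_1 \uplus E_2} \longrightarrow \R^{E_1 \cup E_2}, \qquad
    (\pi \x)_e = x_e \ \text{ for } e \neq p, \quad (\pi \x)_p = x_{p_1} + x_{p_2}.
\]
I would show that $\pi$ restricts to a linear isomorphism $Q \xrightarrow{\ \sim\ } P_\SeriesConn$. First I would describe the vertices of $Q$. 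The vertices of $P_{\MM_1} \times P_{\MM_2}$ are the $0/1$-points $(\Char{B_1}, \Char{B_2})$ with $B_i \in \BB_i$, and on such a point the functional $f(\x) = x_{p_1} + x_{p_2}$ takes the value $|\{i \in \{1,2\} : p \in B_i\}| \in \{0,1,2\}$.

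The key observation is that cutting with the halfspace $\{f \le 1\}$ creates no new vertices. Along any edge of a product polytope only one of the two factors varies, so at most one of $x_{p_1}, x_{p_2}$ changes along an edge and $f$ varies by at most $1$; hence no edge joins a vertex with $f=0$ to one with $f=2$. Consequently the vertices of $Q$ are exactly the product vertices with $f \le 1$, i.e.\ those $(\Char{B_1}, \Char{B_2})$ for which $p \notin B_1 \cap B_2$. Since $E_1 \cap E_2 = \{p\}$, this is precisely the condition $B_1 \cap B_2 = \emptyset$ defining the bases $B_1 \cup B_2$ of $\SeriesConn$, and $\pi$ sends $(\Char{B_1}, \Char{B_2})$ to $\Char{B_1 \cup B_2}$. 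Thus $\pi$ maps the vertex set of $Q$ onto $V_\SeriesConn$, which already gives $\pi(Q) = \conv \pi(\mathrm{vertices}) = P_\SeriesConn$.

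To promote this to an isomorphism I would check that $\pi$ is injective on $\mathrm{aff}(Q)$. First, the correspondence on vertices is a bijection: for a basis $B$ of $\SeriesConn$ with $p \in B$, the only candidate decompositions are $(\,(B\cap E_1)\cup p,\ B\cap E_2\,)$ and $(\,B\cap E_1,\ (B\cap E_2)\cup p\,)$, and these cannot both be valid, since a matroid has no two bases of different cardinality; hence exactly one product vertex maps to $\Char{B}$. Second, $\ker \pi$ is the line spanned by $e_{p_1} - e_{p_2}$, and the functional $g(\x) := x_{p_1} + \sum_{e \in E_1 \setminus \{p\}} x_e$ is constant (equal to $\rk \MM_1$) on $P_{\MM_1} \times P_{\MM_2}$, hence on $Q$; yet $g(e_{p_1} - e_{p_2}) = 1 \neq 0$. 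Therefore $e_{p_1} - e_{p_2} \notin \mathrm{lin}(Q)$, so $\pi|_{\mathrm{aff}(Q)}$ is injective, and combined with $\pi(Q) = P_\SeriesConn$ this yields the desired linear isomorphism.

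The one step genuinely requiring care is the claim that the cut produces no new vertices; everything else is bookkeeping. I expect the hypothesis that $p$ is not a coloop of both $\MM_1$ and $\MM_2$ to enter only to guarantee that $\SeriesConn$ is a genuine matroid, equivalently that $Q \neq \emptyset$: were $p$ in every basis of both matroids, no pair $(B_1,B_2)$ could satisfy $B_1 \cap B_2 = \emptyset$ and every product vertex would have $f = 2$.
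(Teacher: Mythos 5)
Your proof is correct and follows essentially the same route as the paper: the same gluing map $\pi$, the same key step that the hyperplane $\{x_{p_1}+x_{p_2}=1\}$ cannot cross the relative interior of an edge of $P_{\MM_1}\times P_{\MM_2}$, and injectivity on the affine hull via the functional $g$ in place of the paper's explicit linear inverse $s$. If anything, your no-new-vertex step is slightly more elementary, using only the product structure of edges together with $0/1$-vertices, where the paper invokes the fact that edges of $P_{\MM_1}\times P_{\MM_2}$ are parallel to differences $\1_e-\1_f$, a (mildly) deeper property of matroid base polytopes.
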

\begin{proof}
    It is clear that the base configuration $V_\SeriesConn$ is isomorphic
    to
    \[
    V^\prime \ = \ (V_{\MM_1} \times V_{\MM_2}) \cap \{ \x \in \R^{E_1\uplus E_2} : x_{p_1} +
    x_{p_2} \le 1 \}
    \]
    under the linear map $\pi : \R^{E_1 \uplus E_2} \rightarrow \R^{E_1 \cup
    E_2}$ given by $\pi(\1_{p_1}) =  \pi(\1_{p_2}) = \1_{p}$ and $\pi(\1_{e})
    = \1_e$ otherwise. Indeed, let $r_i = \rk(\MM_i)$, then a linear inverse
    is given by $s : \R^{E_1 \cup E_2} \rightarrow \R^{E_1 \uplus E_2}$ with
    $s(\x)_{p_i} = r_i - \ell_{E_i}(\x)$ for $i=1,2$ and the identity
    otherwise. 
    
    It is therefore sufficient to show that the vertices of
    \[
        P^\prime \ = \ (P_{\MM_1} \times P_{\MM_2}) \cap \{ \x \in
        \R^{E_1\uplus E_2} : x_{p_1} + x_{p_2} \le 1 \}.
    \]
    are exactly the points in $V^\prime$. Clearly $V^\prime$ is a subset of
    the vertices and any additional vertex of $P^\prime$ would be the
    intersection of the relative interior of an edge of $P_{\MM_1} \times
    P_{\MM_2}$ with the hyperplane $H = \{ \x : x_{p_1} + x_{p_2} = 1 \}$. However,
    every edge of $P_{\MM_1} \times P_{\MM_2}$ is
    parallel to some $\1_e - \1_f$ for $e,f \in E_1$ or $e,f \in E_2$. Thus
    every edge of $P_{\MM_1} \times P_{\MM_2}$ can meet $H$ only in one of its
    endpoints which proves the claim.
\end{proof}

It is interesting to note that the operation that
related $P_{\MM_1}$ and $P_{\MM_2}$ to $P_{\SeriesConn(\MM_1,\MM_2)}$ is
exactly a \emph{subdirect product} in the sense of McMullen~\cite{mcmullen}.
From the description of $P_{\SeriesConn(\MM_1,\MM_2)}$ we instantly get
information about the Theta rank and levelness of the series and parallel
connection.

\begin{cor}\label{cor:Series_level}
    Let $\SeriesConn = \SeriesConn(\MM_1,\MM_2)$ be the series connection of
    matroids $\MM_1$ and $\MM_2$. Then
    \[
        \Th(\SeriesConn) \ = \ \max( \Th(\MM_1), \Th(\MM_2) ).
    \]
    The same holds true for the parallel connection as well as for the levelness.
\end{cor}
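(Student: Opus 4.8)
The plan is to prove the two inequalities separately for the series connection and then transfer everything to the parallel connection by duality, treating $\Th$ and $\Lev$ in parallel throughout. For the lower bound $\Th(\SeriesConn) \ge \max(\Th(\MM_1),\Th(\MM_2))$, I would invoke the observation recorded just above that $\SeriesConn(\MM_1,\MM_2)$ contains both $\MM_1$ and $\MM_2$ as minors. Since $\MTheta_k$ is closed under taking minors (Theorem~\ref{thm:minor_closed}), the Theta rank of a minor cannot exceed that of $\SeriesConn$; the identical argument with $\MLevels_k$ gives the lower bound for the levelness.

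For the upper bound, the key input is Lemma~\ref{lem:P_series}: up to affine isomorphism, which is harmless by Proposition~\ref{prop:TH-aff}, I may replace $V_\SeriesConn$ by $V' = (V_{\MM_1}\times V_{\MM_2}) \cap \{g \le 1\}$ with $g(\x) = x_{p_1}+x_{p_2}$, where $P' = \conv(V')$ arises from $P_{\MM_1}\times P_{\MM_2}$ by cutting off the halfspace $\{g > 1\}$. The proof of that lemma already shows this cut creates no new vertices, so $V'$ is exactly the vertex set of $P'$. I would then use the standard polytopal fact that the facets of a polytope sliced by a halfspace are exactly the new facet $P'\cap\{g=1\}$ together with the surviving old facets $F\cap\{g\le 1\}$ coming from facets $F$ of $P_{\MM_1}\times P_{\MM_2}$. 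Consequently every facet-defining function of $V'$ is either $g$ itself or a facet-defining function $\ell$ of the product.

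The two types of facet function are then handled separately. The function $g$ takes only the values $0$ and $1$ on $V'$ (the product points satisfy $g \in \{0,1,2\}$ and $V'$ lies in $\{g\le 1\}$), so it is $2$-level and hence $1$-sos. For a facet function $\ell$ inherited from the product, $\Th(V_{\MM_1}\times V_{\MM_2}) = \max(\Th(\MM_1),\Th(\MM_2)) =: k$ by Proposition~\ref{prop:TH-product} yields a $k$-sos representation of $\ell$ over the whole product; since $V'$ is a subconfiguration of the product, that representation restricts verbatim to $V'$, and $\ell$ takes no more values on $V'$ than on the product. Combining the cases gives $\Th(V') \le k$ and $\Lev(V')\le \max(\Lev(\MM_1),\Lev(\MM_2))$: the contribution of $g$ is absorbed because whenever $g$ actually defines a new facet the slice is genuine, forcing the product to be nontrivial and thus $k \ge 1$ and $\max(\Lev(\MM_1),\Lev(\MM_2)) \ge 2$; when the slice is trivial one has $V' = V_{\MM_1}\times V_{\MM_2}$ directly. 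This matches the lower bound and settles the series connection.

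Finally, the parallel connection is $\SeriesConn(\MM_1^*,\MM_2^*)^*$, and by~\eqref{eqn:V-dual} dualization acts on base configurations as an affine map, so it preserves both invariants by Proposition~\ref{prop:TH-aff}; together with $\Th(\MM_i^*) = \Th(\MM_i)$ (and the analogous identity for $\Lev$) this transfers the series result to the parallel connection. The step I expect to require the most care is the polytopal claim that slicing introduces no facet-defining function beyond $g$ and the restrictions of product facets, together with the bookkeeping of the degenerate cases where $g$ fails to define a genuine facet; everything else is a direct application of the cited results.
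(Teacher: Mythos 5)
Your proposal is correct and follows essentially the same route as the paper: Lemma~\ref{lem:P_series} to identify the facet-defining functions of $P_\SeriesConn$ as those of $P_{\MM_1}\times P_{\MM_2}$ plus $g(\x)=x_{p_1}+x_{p_2}$, the observation that $g$ takes only the values $0$ and $1$, Proposition~\ref{prop:TH-product} for the product, and duality via~\eqref{eqn:V-dual} and Proposition~\ref{prop:TH-aff} for the parallel connection. The only difference is one of explicitness: you spell out the lower bound via minor-closedness (Theorem~\ref{thm:minor_closed}), the polytopal fact behind the facet identification, and the degenerate-slice bookkeeping, all of which the paper's terser proof leaves implicit in the asserted equality $\Th(V_\SeriesConn)=\Th(V_{\MM_1}\times V_{\MM_2})$.
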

\begin{proof}
    Lemma~\ref{lem:P_series} shows that the facet-defining affine functions of
    $P_\SeriesConn$ are among those of ${P_{\MM_1} \times P_{\MM_2}}$ and
    $\ell(\x) = x_{p_1} + x_{p_2}$. However, by the characterization of the
    bases of $\SeriesConn$, $\ell(\x)$ can take only values in $\{0,1\}$.
    Hence, $\Th(V_\SeriesConn) \ = \ \Th(V_{\MM_1}\times V_{\MM_2})$ and
    Proposition~\ref{prop:TH-product} finishes the proof.
\end{proof}

\begin{cor}\label{cor:Series_closed}
    The classes $\MTheta_{ k}$ and $\MLevels_{ k}$ are closed under
    taking series and parallel connections.
\end{cor}

The most important operation that we will need is derived from the series
connection. Let $\MM_1 = (E_1,\BB_1)$ and $\MM_2 = (E_2,\BB_2)$ be matroids
with $E_1 \cap E_2 = \{p\}$. If $p$ is not a loop nor a coloop for neither $\MM_1$ nor $\MM_2$, then we define
the \Defn{$2$-sum} 
\[
    \MM_1 \oplus_2 \MM_2 \ := \ \SeriesConn(\MM_1,\MM_2) / p.
\]
This is the matroid on the ground set $E = (E_1 \cup E_2) \setminus p$ and
with bases
\[
    \BB \ := \ \{ B_1\cup B_2\setminus p  :  B_1\in \BB_1, B_2\in
    \BB_2, p\in B_1 \triangle B_2 \}
\]
where $B_1 \triangle B_2$ is the symmetric difference.

The 2-sum is an associative operation for matroids which defines, by analogy
to the direct sum, the 3-connectedness: a connected matroid $\MM$ is
\Defn{3-connected} if and only if it cannot be written as a 2-sum of two
matroids each with fewer elements than $\MM$.

\begin{ex}
Let us consider the $2$-sum of a matroid $U_{3,2}\bigoplus_2 \MM(K_4)$: both matroids are graphic, therefore we can illustrate the operation for the corresponding graphs.
\begin{center}
 \includegraphics[scale=0.7]{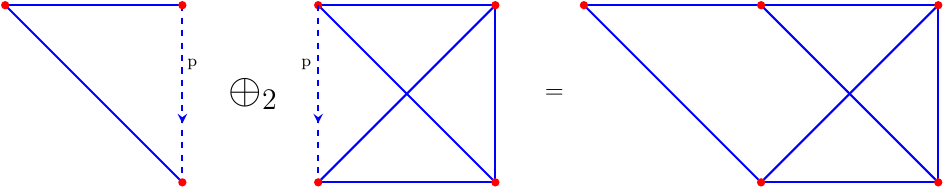}
 \end{center}
To perform the $2$-sum we select an element for each matroid, while in the picture it looks like we also need to orient the chosen element. This is the case only because we are drawing an embedding of a graphic matroids; in fact the structure given by the vertices is forgotten when we look at the matroid. Whitney's $2$-Isomorphism Theorem \cite[Thm.~5.3.1]{Oxley} clarifies that the matroid structure does not depend on the orientation we decide for the chosen elements.
\end{ex}

We will need the following two properties of $2$-sums.

\begin{lemma}[{\cite[Lem.~2.3]{Chaourar}}]\label{lem:3-uniform}
Let $\MM$ be a $3$-connected matroid having no minor isomorphic to any of
$\MM(K_4)$, $\mathcal{W}^{3}$, $Q_6$, $P_6$. Then $\MM$ is uniform.
\end{lemma}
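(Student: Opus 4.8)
The statement is equivalent, by contraposition, to the assertion that every $3$-connected non-uniform matroid contains one of $\MM(K_4)$, $\mathcal{W}^3$, $Q_6$, $P_6$ as a minor, and the plan is to prove this form by induction on $|E|$. Throughout I would use that $\MM$ is non-uniform if and only if it has a non-spanning circuit, i.e.\ a circuit $C$ with $|C| \le \rk(\MM)$, together with the fact that each of the four excluded matroids is self-dual, so that the whole hypothesis is invariant under passing to $\MM^*$ (recall $V_{\MM^*} = \1 - V_\MM$). For the reductions: a $3$-connected matroid with $|E| \ge 4$ is simple and cosimple, so loops, coloops, parallel and series elements may be ignored. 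If $\rk(\MM) \le 2$ then, being simple, $\MM \cong U_{|E|,\rk(\MM)}$ is uniform; dually if $\rk(\MM^*) \le 2$. Hence in the non-uniform case $\rk(\MM) \ge 3$ and $\rk(\MM^*) \ge 3$, so $|E| \ge 6$.

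For the base case $|E| = 6$ we have $\rk(\MM) = \rk(\MM^*) = 3$, so $\MM$ is a simple rank-$3$ matroid on six points, i.e.\ a planar point configuration, and non-uniformity means it has at least one $3$-point line. A line with four or more points would yield a $2$-separation, contradicting $3$-connectivity, so every line has at most three points. I would then enumerate by the number $t$ of $3$-point lines: $t=1$ forces $P_6$; $t=2$ forces the two lines to meet, since two disjoint $3$-point lines give a $2$-separation, yielding $Q_6$; $t=3$ yields $\mathcal{W}^3$; and $t=4$ yields $\MM(K_4)$, while $t \ge 5$ is impossible on six points of rank $3$ without creating a long line. This recovers the four matroids exactly.

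For the inductive step $|E| \ge 7$, I would invoke Tutte's Wheels-and-Whirls theorem: either $\MM$ is a wheel $\MM(W_k)$ or a whirl $\mathcal{W}^k$ with $k \ge 4$, or there is an element $e$ with $\MM \setminus e$ or $\MM / e$ again $3$-connected. In the first case, contracting spokes reduces $\MM(W_k)$ to $\MM(W_3) = \MM(K_4)$ and $\mathcal{W}^k$ to $\mathcal{W}^3$, so a forbidden minor appears. In the second case let $\MM'$ be the $3$-connected single-element minor. If $\MM'$ is non-uniform we are done by induction, since a forbidden minor of $\MM'$ is one of $\MM$. The remaining possibility is that every such $\MM'$ is uniform; dualizing if necessary I may assume $\MM' = \MM \setminus e = U_{|E|-1,\rk(\MM)}$, so that $\MM$ is a single-element extension of a uniform matroid whose non-uniformity is caused solely by $e$, which lies on a non-spanning circuit $C$. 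Contracting $|C|-3$ elements of $C$ turns three remaining elements into a $3$-point line, and contracting and deleting further among the generic elements of the uniform part down to six elements leaves this as the only dependent triple; the resulting minor is then $P_6$.

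The crux, and the main obstacle, is precisely this last subcase: one must check that the controlled reduction from a $3$-connected extension of a uniform matroid really lands on $P_6$ and not on a lower-connectivity configuration such as two disjoint $3$-point lines, which contains none of the four. This is exactly where the genericity of the uniform part is essential, and where one must track carefully that no second dependency is created during the contractions. The base-case enumeration is routine once the connectivity bound on line lengths is in hand, and the wheels-and-whirls case is immediate; so the real work is isolating a \emph{single} defect and reducing it cleanly to $P_6$.
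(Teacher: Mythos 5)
The paper never proves this lemma itself---it is imported verbatim from Chaourar's work \cite[Lem.~2.3]{Chaourar}---so your attempt can only be judged on its own terms, and there it has a genuine gap, precisely at the step you yourself flag. The skeleton is sound: the four excluded matroids form a self-dual family, so dualizing is legitimate; the six-element base case (rank $3$ by the rank/corank bounds, lines of length $\le 3$ by $3$-connectivity, enumeration by the number $t$ of $3$-point lines) is correct, modulo the small omission that for $t=3$ one must also rule out three concurrent lines, which is impossible on six points; and the wheels-and-whirls dichotomy handles the first branch. But the final subcase is asserted, not proved, and the assertion ``the resulting minor is then $P_6$'' is false as stated. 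Concretely, let $N$ be the rank-$3$ matroid on $\{e,f,g,x,y,u,v\}$ obtained by extending $U_{6,3}$ by a point $e$ lying on the three mutually disjoint lines $fg$, $xy$, $uv$ (a legitimate single-element extension, since disjoint lines in rank $3$ are not a modular pair). Then $N$ is $3$-connected, $N\setminus e = U_{6,3}$ is uniform, and $e$ lies on a non-spanning circuit---all the hypotheses your reduction actually uses---yet $N$ has \emph{no} $P_6$ minor: every six-element restriction of $N$ containing $e$ and a full line retains at least two $3$-point lines, so the reachable forbidden minor is $Q_6$, not $P_6$. Your procedure, applied verbatim, cannot be steered to $P_6$.

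The fix is available and it also dissolves the danger you worry about: after contracting inside the uniform part, the deletion of $e$ is \emph{still} uniform, so every $3$-point line (indeed every non-spanning circuit) of every minor produced by your reduction passes through $e$. Hence the feared configuration of two disjoint $3$-point lines can never arise, and two lines through $e$ cannot share a second point either (the two underlying lines of the uniform matroid would form a modular pair, forcing $e$ parallel to the shared point, contradicting simplicity). With that structure, a rank-$3$, corank-$\ge 3$ minor whose $3$-point lines are all concurrent at $e$ always contains $P_6$ \emph{or} $Q_6$ as a restriction, and either suffices. What your sketch is additionally silent on is the existence of a good contraction set: when reducing the rank from $r'\ge 4$ to $3$ one must check that $e$ does not become a loop or parallel element (other circuits through $e$ could collapse under contraction); this needs an analysis of the modular cut of the extension, which shows that every flat of the cut of rank $\le r'-2$ contains the line $fg$, so the triangle is the unique circuit through $e$ of size $\le r'-1$ and no degeneration occurs. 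Without the ``all defects pass through $e$'' observation, the relaxation of the target from $P_6$ to $\{P_6, Q_6\}$, and the modular-cut control of the contractions, the crux of your induction remains an assertion rather than a proof.
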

\begin{lemma}[{\cite[Thm.~8.3.1]{Oxley}}]\label{lem:not3con}
Every matroid that is not $3$-connected can be constructed from $3$-connected proper minors of itself by a sequence of direct sums and $2$-sums.
\end{lemma}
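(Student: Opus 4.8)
The plan is to argue by strong induction on $|E|$, peeling off one layer of connectivity at a time: by the very definitions recalled above, a failure of $3$-connectedness is a splitting of $\MM$ either as a direct sum or as a $2$-sum into strictly smaller pieces, and we recurse on those pieces until only $3$-connected matroids remain.

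To set up the induction I would assume the statement for every matroid on fewer than $|E|$ elements and let $\MM$ be not $3$-connected. There are exactly two ways this can happen. If $\MM$ is disconnected, then grouping its connected components into two nonempty blocks $E = E_1 \sqcup E_2$ yields $\MM = \MM|_{E_1} \oplus \MM|_{E_2}$, with each summand a restriction and hence a proper minor of $\MM$ on fewer elements. If $\MM$ is connected but not $3$-connected, then by the definition of $3$-connectedness it admits a writing $\MM = \MM_1 \oplus_2 \MM_2$ in which both $\MM_1$ and $\MM_2$ have strictly fewer elements than $\MM$.

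In either case I then invoke the inductive hypothesis on the two summands. Each summand is either already $3$-connected, in which case it contributes a single $3$-connected proper minor, or, being smaller and not $3$-connected, decomposes into $3$-connected proper minors of itself joined by direct sums and $2$-sums. Since a minor of a summand is again a minor of $\MM$, and every piece produced is strictly smaller than $\MM$ and thus proper, substituting these inner decompositions into the outer direct sum or $2$-sum exhibits $\MM$ itself as built from $3$-connected proper minors of $\MM$ by direct sums and $2$-sums, closing the induction.

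The delicate point, and the one I expect to require the most care, is justifying that the $2$-sum summands $\MM_1,\MM_2$ really are (isomorphic to) minors of $\MM = \SeriesConn(\MM_1,\MM_2)/p$, so that the phrase \emph{proper minors of itself} is warranted. We already recorded that $\SeriesConn(\MM_1,\MM_2)$ contains both summands as minors; the work is to transport this fact through the contraction of the base point $p$ that produces the $2$-sum. Concretely, one reduces the elements of $E_1 \setminus p$ by a suitable sequence of deletions and contractions down to a single element $f$ and checks that the resulting matroid on $(E_2 \setminus p) \cup \{f\}$ is isomorphic to $\MM_2$, with $f$ assuming the role of the base point; the symmetric procedure recovers $\MM_1$. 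The only subtlety is the bookkeeping of when an element becomes a loop or a coloop along the way, precisely the degeneracies ruled out by the hypothesis that $p$ is neither a loop nor a coloop of either summand, and once these cases are dispatched the minor-recovery, and with it the induction, goes through.
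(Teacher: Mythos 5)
The paper supplies no proof of this lemma at all: it is imported verbatim from Oxley (Thm.~8.3.1, due to Seymour; the iterated form is the Cunningham--Edmonds tree decomposition). Your inductive argument is therefore necessarily a different route, and it is correct \emph{relative to this paper's definitions}. The crucial point is that the paper defines $3$-connectedness non-standardly: ``a connected matroid $\MM$ is $3$-connected if and only if it cannot be written as a $2$-sum of two matroids each with fewer elements than $\MM$.'' With that definition your case split (disconnected $\Rightarrow$ direct sum of two restrictions; connected but not $3$-connected $\Rightarrow$ $2$-sum of strictly smaller matroids) is indeed definitional, the substitution step works because each summand is first reassembled from its own pieces and then joined by the outer sum, and the only genuine mathematical content is the point you correctly isolate: that the parts $\MM_1,\MM_2$ of a $2$-sum are isomorphic to proper minors of $\MM_1 \oplus_2 \MM_2$. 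Your sketch of this is the standard argument and can be made precise in one line: since $p$ is neither a loop nor a coloop of $\MM_1$, it lies in a circuit $C$ of $\MM_1$ with $|C|\ge 2$; choosing $f \in C \setminus p$, one checks that $\bigl(\MM_1 \oplus_2 \MM_2\bigr) \setminus (E_1 \setminus C) \, / \, (C \setminus \{p,f\}) \cong \MM_2$ under the relabelling $f \mapsto p$ (this fact is itself recorded in Oxley alongside the quoted theorem). Be aware, however, of what your proof does \emph{not} buy: under Tutte's standard definition of $3$-connectivity (absence of $2$-separations), the entire substance of Oxley's Thm.~8.3.1 is exactly the implication your first step gets for free, namely that a connected matroid with a $2$-separation decomposes as a $2$-sum of strictly smaller matroids. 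So your argument proves the lemma as stated within this paper's framework, but it does not reprove the cited theorem in its original formulation, and if the paper had used the standard definition your proof would have a genuine gap at that step.
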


We can finally give a complete characterization of the class $\MLevels_{2}
= \MTheta_{ 1}$.

\begin{thm}\label{thm:main1}
    For a matroid $\MM$ the following are equivalent.
    \begin{enumerate}[\rm (i)]
        \item $\MM$ has Theta rank $1$.
        \item $\MM$ is $2$-level.
        \item $\MM$ has no minor isomorphic to $\MM(K_4)$, $\mathcal{W}^3$,
            $Q_6$, or $P_6$.
        \item $\MM$ can be constructed from uniform matroids by taking direct
            or $2$-sums.
    \end{enumerate}
\end{thm}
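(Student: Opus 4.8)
The plan is to prove the cyclic chain of implications (i) $\Rightarrow$ (ii) $\Rightarrow$ (iii) $\Rightarrow$ (iv) $\Rightarrow$ (i), drawing on the machinery assembled so far. The equivalence (i) $\Leftrightarrow$ (ii) is immediate from Theorem~\ref{thm:GPT}, which characterizes Theta rank $1$ as $2$-levelness for \emph{every} finite point configuration, so no matroid-specific work is needed there. This reduces the theorem to establishing the three remaining implications, and by symmetry one may collapse (i) and (ii) into the single statement ``$\MM \in \MLevels_2$'' throughout.

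For (ii) $\Rightarrow$ (iii) I would argue by contraposition. Each of $\MM(K_4)$, $\mathcal{W}^3$, $Q_6$, and $P_6$ has been shown to be $3$-level in Propositions~\ref{prop:K4} and~\ref{prop:minor-list}, hence none lies in $\MLevels_2$. If $\MM$ had one of them as a minor, then Theorem~\ref{thm:minor_closed} (minor-closedness of $\MLevels_k$) would force $\Lev(V_\MM) \ge 3$, contradicting $2$-levelness. So a $2$-level matroid excludes all four as minors.

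The implication (iii) $\Rightarrow$ (iv) is the structural heart of the argument, and I expect it to be the main obstacle, though the two cited lemmas do almost all the heavy lifting. Suppose $\MM$ has no minor isomorphic to $\MM(K_4)$, $\mathcal{W}^3$, $Q_6$, or $P_6$. I would induct on $|E|$. If $\MM$ is $3$-connected, then Lemma~\ref{lem:3-uniform} directly gives that $\MM$ is uniform, which is the base case of the construction. If $\MM$ is not $3$-connected, then Lemma~\ref{lem:not3con} expresses $\MM$ as a sequence of direct sums and $2$-sums of proper minors of itself; each such minor inherits the forbidden-minor hypothesis (the class in (iii) is minor-closed, being defined by excluded minors), so by the inductive hypothesis each factor is built from uniform matroids via direct sums and $2$-sums, and assembling them yields the desired construction for $\MM$. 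The delicate point here is purely bookkeeping: verifying that the proper minors supplied by Lemma~\ref{lem:not3con} are genuinely smaller and still excluded-minor-free, so that the induction is well-founded.

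Finally, (iv) $\Rightarrow$ (i) follows by closure of the class $\MTheta_1$ under the relevant operations. Uniform matroids have Theta rank $1$ by Proposition~\ref{prop:TH-uniform}. The class $\MTheta_1$ is closed under direct sums, since the base configuration of a direct sum is a Cartesian product (equation~\eqref{eqn:direct_sum}) and $\Th$ of a product is the maximum of the factors' Theta ranks by Proposition~\ref{prop:TH-product}. It is closed under $2$-sums because $2$-sums are obtained from series connections by contraction: Corollary~\ref{cor:Series_closed} gives closure under series and parallel connections, and $\MTheta_1$ is minor-closed by Theorem~\ref{thm:minor_closed}, so contracting the base point $p$ preserves membership. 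Hence any matroid constructed from uniform matroids by iterated direct sums and $2$-sums remains in $\MTheta_1$, closing the cycle.
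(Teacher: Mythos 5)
Your proof is correct and follows essentially the same route as the paper's: the same chain of implications via Theorem~\ref{thm:GPT}, minor-closedness with Propositions~\ref{prop:K4} and~\ref{prop:minor-list}, the Chaourar and Oxley decomposition lemmas (Lemmas~\ref{lem:3-uniform} and~\ref{lem:not3con}), and closure under series connection plus minor-closedness to handle the $2$-sum. Your only departures are presentational --- an explicit induction for (iii)~$\Rightarrow$~(iv), which is actually unnecessary since Lemma~\ref{lem:not3con} already delivers $3$-connected factors directly, and spelling out the contraction argument that the paper compresses into ``by definition'' --- both of which are fine.
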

\begin{proof}
    (i) $\Rightarrow$ (ii) is just Theorem~\ref{thm:GPT}. (ii) $\Rightarrow$
    (iii) follows from Theorem~\ref{thm:minor_closed} and
    Proposition~\ref{prop:minor-list}. Let $\MM$ be a matroid satisfying
    (iii). If $\MM$ is $3$-connected, then $\MM$ is uniform by
    Lemma~\ref{lem:3-uniform}. If $\MM$ is not $3$-connected, then, by
    Lemma~\ref{lem:not3con}, we can decompose $\MM$ into $3$-connected
    matroids and we may repeat the argument for each of these matroids. This
    shows (iv).  Finally, uniform matroids have Theta rank $1$ by
    Proposition~\ref{prop:TH-uniform}. Theta rank $\le k$ is retained by
    series connection (Corollary~\ref{cor:Series_level}) and, by definition,
    also by the $2$-sum.
\end{proof}

\begin{ex}\label{ex:serpargraphs}
    If we look at the family of $2$-level graphic matroids, the only excluded
    minor is the graph $K_4$. The class of graphs which do not contain $K_4$
    as a minor is the well-known class of \Defn{series-parallel graphs}
    $\GSeriesPar$. The theorem implies $\GLevels_{ 2}=\GSeriesPar$.
\end{ex}

There are other point configurations that are naturally associated to a
matroid $\MM$, most notably the configuration $D_\MM = \{ \Char{X} : X
\subseteq E \text{ dependent} \}$. For \emph{binary} matroids, the associated
polytope (up to translation and scaling) is called the \emph{cycle polytope}.
The practical relevance stems from the situation where $\MM = \MM(G)^*$ for
some graph $G$.  In this case, $D_\MM$ represents the collection of cuts in
$G$ which are important in combinatorial optimization.  The Theta rank of
$D_\MM$ has been studied in~\cite{GLPT}.  In particular, the paper gives a
characterization of binary matroids with $\Th(D_\MM)=1$ in terms of forbidden
minors with some additional conditions on the cocircuits.  The situation is
slightly different as the Theta rank of circuit configurations is monotone
with respect to deletion minors but not necessarily with respect to
contraction minors.  The characterization of $2$-level cut polytopes has been
also obtained by Sullivant~\cite{Sullivant}.

\section{Generation and psd rank}
\label{sect:gen_psd}

In this section we study two further face-hereditary properties of point
configurations that are intimately related to Theta-$1$ configurations.

\subsection{Degree of generation} 

For a point configuration $V \subset \R^d$, the \Defn{vanishing ideal} of $V$
is 
\[
    I(V) \ := \ \{ f(\x) \in \R[\x] = \R[x_1,\dots,x_d] : f(v) = 0 \text{ for
    all } v \in V \}.
\]

We say that $V$ is \Defn{generated in degree $\leq k$} if $I(V)$ is generated
as an ideal by $\{ f \in I(V) : \deg f \le k \}$ and we write $\Gen(V) = k$ if
$k$ is minimal with that property.  We define
\[
    \VGen_{ k} \ := \ \{ V \text{ point configuration} : \Gen(V) \le k \}.
\]

It is clear that $\Gen(V)$ is an affine invariant and, since all point
configurations are finite, we get
\begin{prop}
    The class $\VGen_{ k}$ is face-hereditary.
\end{prop}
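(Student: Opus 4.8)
The plan is to reduce the whole statement to the single ideal-theoretic identity $I(V') = I(V) + (g)$, where $V' = V \cap H$ is a face of a configuration $V \in \VGen_{k}$ and $g$ is the affine (degree-one) function with $H = \{ \x : g(\x) = 0 \}$. Granting this identity, the conclusion is immediate: a generating set $f_1,\dots,f_m$ of $I(V)$ of degree $\le k$ (which exists since $\Gen(V) \le k$) together with $g$ yields the generating set $f_1,\dots,f_m,g$ of $I(V')$, and every one of these generators has degree $\le \max(k,1)$. For $k \ge 1$ this exhibits $V'$ as being of degree $\le k$, i.e.\ $V' \in \VGen_{k}$; the case $k = 0$ is degenerate, as $\Gen(V) = 0$ forces $I(V) = \R[\x]$ and hence $V = \emptyset$, for which there is nothing to prove.

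It therefore remains to prove the identity. The inclusion $I(V) + (g) \subseteq I(V')$ is clear, since every element of $I(V)$ vanishes on $V' \subseteq V$ and $g$ vanishes on $V' \subseteq H$. For the reverse inclusion I would argue as in the proof of Lemma~\ref{lem:face-hed}. Write $W := V \setminus V'$; since $V' = \{ v \in V : g(v) = 0\}$, the function $g$ is non-zero at every point of the finite set $W$. Given $f \in I(V')$, I want to correct $f$ by a multiple of $g$ so that the result vanishes on all of $V$. Because $W$ is finite and $g$ does not vanish on it, the prescribed values $f(w)/g(w)$, $w \in W$, can be interpolated by a single polynomial $q$ (the evaluation map $\R[\x] \to \R^{W}$ is onto). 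Then $f - qg$ vanishes on $W$ by construction and on $V'$ because both $f$ and $g$ do, hence on all of $V$; thus $f - qg \in I(V)$ and $f = (f - qg) + qg \in I(V) + (g)$.

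I expect the only point that needs a word of care to be the degree bookkeeping around the interpolation: the auxiliary polynomial $q$ may have arbitrarily large degree, but this is harmless, since $q$ only witnesses membership of $f$ in the ideal $I(V) + (g)$ and plays no role in its generating set. The generating set is fixed once and for all as $\{ f_1,\dots,f_m,g \}$, whose maximal degree is bounded by $k$ alone. In this way $\VGen_{k}$ joins $\VTheta_{k}$ and $\VLevels_{k}$ as a face-hereditary class, which is exactly what the minor-closedness arguments will require.
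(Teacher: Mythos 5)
Your proof is correct and takes essentially the same route as the paper: the paper's proof consists of asserting the key identity $I(V \cap H) = I(V) + \langle \ell(\x) \rangle$ and concluding from the linearity of $\ell$, while you additionally supply the finite-interpolation argument (implicit in the paper's remark that all point configurations are finite) that actually establishes this identity. Your degree bookkeeping, including the observation that the high-degree interpolant $q$ only witnesses ideal membership and never enters the generating set, is exactly right.
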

\begin{proof} 
    Let $H = \{ p : \ell(p) = 0 \}$ be a supporting hyperplane for $V$.  Since
    $V$ is finite, the vanishing ideal of $V^\prime = V \cap H$ is the ideal
    generated by $I(V)$ and $\ell(\x)$. Since $\ell(\x)$ is affine, this then
    shows that $\Gen(V^\prime) \le \Gen(V)$. 
\end{proof}

The relation to point configurations of Theta rank $1$ is given by the
following proposition which is implicit in~\cite{GPT}.

\begin{prop}\label{prop:gen2}
    If $V \subset \R^d$ is a point configuration of Theta rank $1$, then
    $\Gen(V) \le 2$.
\end{prop}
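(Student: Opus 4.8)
The plan is to reduce to a $0/1$-configuration and then exhibit an explicit set of generators of degree $\le 2$ whose vanishing locus is forced to be $V$ by the $2$-level structure. By Theorem~\ref{thm:GPT}, a configuration of Theta rank $1$ is $2$-level, and every $2$-level configuration is affinely isomorphic to a $0/1$-configuration. Since $\Gen$ is an affine invariant, as noted above, I may assume $V \subseteq \{0,1\}^E \subset \R^E$ and that $P = \conv(V)$ is full-dimensional inside $\mathrm{aff}(V)$.

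Next I would assemble a candidate ideal $J \subseteq I(V)$, generated in degrees $\le 2$, from three families: (a) the linear forms cutting out $\mathrm{aff}(V)$; (b) the \emph{cube quadrics} $x_e^2 - x_e$ for $e \in E$, which lie in $I(V)$ because $V$ consists of $0/1$-points; and (c) for every facet $F$ of $P$, with facet-defining function $\ell_F$, the quadric $q_F := (\ell_F - a_F)(\ell_F - b_F)$, where $a_F < b_F$ are the exactly two values that $\ell_F$ takes on $V$ by $2$-levelness. Each $q_F$ vanishes on $V$ and has degree $2$, so indeed $J \subseteq I(V)$.

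The heart of the argument is to show $J = I(V)$. First I would compute the vanishing locus $\mathbb{V}(J)$: by (b) every point lies in $\{0,1\}^E$, by (a) it lies in $\mathrm{aff}(V)$, and by (c) it satisfies $\ell_F \in \{a_F,b_F\} \subseteq (-\infty, b_F]$ for every facet, hence lies in $P$. A general and elementary fact about $0/1$-polytopes then finishes the set computation: if $w \in \{0,1\}^E \cap \conv(V)$ and $w = \sum_v \lambda_v v$ is a convex combination of vertices $v \in V$, then matching each coordinate (using $v_e, w_e \in \{0,1\}$) forces every $v$ with $\lambda_v > 0$ to agree with $w$, so $w \in V$. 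Thus $\mathbb{V}(J) = V$.

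Finally I would upgrade this set-theoretic equality to the ideal equality, which I expect to be the main obstacle. Since $J$ contains all $x_e^2 - x_e$, the quotient $\R[\x]/J$ is a quotient of $\R[\x]/\langle x_e^2 - x_e : e \in E \rangle \cong \R^{\{0,1\}^E}$, a product of copies of $\R$. In such a product of fields every ideal is the ideal of functions vanishing on a subset, so $\R[\x]/J \cong \R^{\mathbb{V}(J)} = \R^V$, which has dimension $|V| = \dim_\R \R[\x]/I(V)$. Together with $J \subseteq I(V)$ this forces $J = I(V)$, and $J$ is generated in degrees $\le 2$, giving $\Gen(V) \le 2$. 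The delicate point is precisely this passage from equal vanishing loci to equal ideals, and it is exactly where the cube quadrics in (b) are essential: they make the ambient ring reduced and let the product-of-fields structure collapse any radical discrepancy.
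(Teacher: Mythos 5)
Your proof is correct, and its skeleton matches the paper's: both reduce via Theorem~\ref{thm:GPT} to a $2$-level $0/1$-configuration, take as candidate generators the cube quadrics $x_e^2-x_e$ together with the facet quadrics $(\ell_F-a_F)(\ell_F-b_F)$, and identify the vanishing locus with $V$ via the elementary fact that a $0/1$-point of a $0/1$-polytope is a vertex. Where you genuinely diverge is the passage from $\mathbb{V}(J)=V$ to $J=I(V)$: the paper observes that the common zero set is a smooth, real subset of $\{0,1\}^d$ and invokes real radicality of the generated ideal, whereas you note that modulo the cube quadrics the coordinate ring is the product of fields $\R^{\{0,1\}^E}$, in which every ideal is the vanishing ideal of a subset, and close with the dimension count $\dim_\R \R[\x]/J = |V| = \dim_\R \R[\x]/I(V)$. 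Your route is more elementary and self-contained --- it replaces a tersely stated real-algebraic-geometry fact with a Chinese-remainder argument --- and it also handles non-spanning configurations directly by keeping the linear forms of $\mathrm{aff}(V)$ among the generators, where the paper instead normalizes to a full-dimensional polytope. One tiny wrinkle: you justify $\mathbb{V}(J)\subseteq P$ via the single bound $\ell_F \le b_F$, which silently presumes an orientation of the facet inequalities; since your points satisfy $\ell_F \in \{a_F,b_F\}$ they in fact meet both bounds $a_F \le \ell_F \le b_F$, so the containment holds under either convention, but it would be cleaner to say so explicitly.
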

\begin{proof}
    From Theorem~\ref{thm:GPT} we infer that the points $V$ are in convex
    position and the polytope $P=\conv(V)$ is $2$-level. We may assume that
    the configuration is spanning and hence up to affine equivalence, the
    polytope is given by
    \[
        P \ = \ \left\{ p \in \R^d : 
            \begin{array}{r@{\ }c@{\ }l@{\ }l} 
                0 \le & p_i & \le  1 &\text{ for } i = 1,\dots,d \\
                \delta_j^-  \le & \ell_j(p) & \le \delta_j^+ &\text{ for }
                j=1,\dots,n
            \end{array}
        \right\}
    \]
    for unique linear functions $\ell_j(\x)$ and $\delta_j^- < \delta_j^+$. In
    particular, $V \subseteq \{0,1\}^d$.  We claim that $I(V)$ is generated by
    the quadrics 
    \[
        x_i(x_i-1)  \quad \text{for } 1 \le i \le d, 
        \qquad 
        (\ell_j(\x) - \delta_j^-)(\ell_j(\x)-\delta^+_j)  \quad \text{for } 1
            \le j \le n.
    \]
    The vanishing locus $U$ is a smooth subset of $\{0,1\}^d$.  Thus, the
    polynomials span a real radical ideal. Now, every vertex $v \in V
    \subseteq \{0,1\}^d$ satisfies $\ell_j(v) = \delta_j^{\pm}$. Hence $V
    \subseteq U$. Conversely, every $u \in U$ is a vertex of $P$ and hence $U
    \subseteq V$.
\end{proof}

The following example illustrates the fact that degree of generation is
invariant under projective transformations while Theta rank is not.
\begin{ex}\label{ex:gen2}
To see that generation in degrees $\le 2$ is necessary for Theta rank $1$ but not
sufficient, consider the planar point configuration $V = \{
(1,0),(0,1),(2,0),(0,2)\}$. The configuration is clearly not $2$-level and
hence not Theta $1$, however the vanishing ideal $I(V)$ is generated by $x_1x_2$ and
$(x_1+x_2-1)(x_1+x_2-2)$ which implies $\Gen(V)\le 2$.
\begin{center}
\includegraphics[scale=0.7]{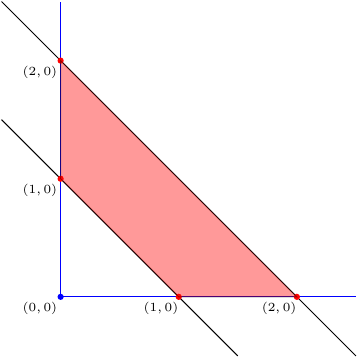}
\end{center}
\end{ex}

The vanishing ideals of base configurations are easy to write down explicitly.

\begin{prop}\label{prop:matroid_ideal}
    Let $\MM = (E,\BB)$ be a matroid of rank $r$. The vanishing ideal for
    $V_\MM$ is generated by
    \[
        x_e^2-x_e \text{ for all } e \in E, \quad \ell_E(\x) -r, \quad
        \x^C=\prod_{e\in C}x_e \text{ for all circuits } C \subset E.
        \]
\end{prop}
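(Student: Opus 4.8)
The plan is to show that the vanishing ideal $I(V_\MM)$ of the base configuration equals the ideal $J$ generated by the listed polynomials. One inclusion is immediate: each generator vanishes on $V_\MM$. Indeed, $x_e^2 - x_e$ vanishes because $V_\MM \subset \{0,1\}^E$; the linear form $\ell_E(\x) - r$ vanishes because every basis has exactly $r$ elements; and for a circuit $C$, the monomial $\x^C = \prod_{e \in C} x_e$ vanishes on $\Char{B}$ for every basis $B$, since no basis contains a circuit (circuits are dependent, bases are independent). Hence $J \subseteq I(V_\MM)$.

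For the reverse inclusion I would argue that the two ideals have the same vanishing locus and that $J$ is radical, so that $I(V_\MM) = I(V(J)) = \sqrt{J} = J$. First I would compute the common zero set $U = V(J)$ over $\R$ (equivalently over $\mathbb{C}$, once radicality is established). The equations $x_e^2 = x_e$ force any $u \in U$ to be the characteristic vector $\Char{X}$ of some subset $X \subseteq E$. The condition $\ell_E(u) = r$ forces $|X| = r$. Finally, $\x^C(u) = 0$ for every circuit $C$ means $C \not\subseteq X$ for every circuit, i.e. $X$ contains no circuit and is therefore independent. An independent set of size exactly $r = \rk(\MM)$ is a basis, so $U = V_\MM$.

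The main point to address carefully is radicality. Since $U \subseteq \{0,1\}^E$ is a finite set of real points and each is a reduced (smooth, isolated) solution of the binomial system $x_e^2 = x_e$, the ideal is real radical; I would invoke the same reasoning used in the proof of Proposition~\ref{prop:gen2}, where the vanishing locus being a smooth real subset of $\{0,1\}^d$ guarantees that the generators span a real radical ideal. Concretely, the Jacobian of the system $\{x_e^2 - x_e\}_{e \in E}$ is diagonal with entries $2x_e - 1 = \pm 1$ at any $0/1$-point, hence of full rank, so every point of $U$ is a simple zero and $J$ is radical. Therefore $I(V_\MM) = I(U) = \sqrt{J} = J$, completing the proof.

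I expect the genuinely substantive step to be verifying that the three families together cut out exactly the bases — in particular translating the circuit monomials into the combinatorial statement that $X$ is independent, and observing that size $r$ plus independence forces $X$ to be a basis. The radicality argument, while essential for the ideal-theoretic (as opposed to set-theoretic) conclusion, is routine given the $0/1$-structure and mirrors the earlier proposition.
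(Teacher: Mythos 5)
Your proof is correct and takes essentially the same route as the paper's: identify the common zero locus of the three families of generators as exactly the characteristic vectors of bases ($0/1$ points of coordinate sum $r$ containing no circuit), then upgrade the set-theoretic statement to equality of ideals via (real) radicality, citing the same smooth-$0/1$-point reasoning as Proposition~\ref{prop:gen2}. Your explicit Jacobian computation for the quadrics $x_e^2 - x_e$ merely spells out what the paper leaves implicit in ``arguments similar to those used in the proof of Proposition~\ref{prop:gen2}.''
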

\begin{proof}
    Any complex solution to the first two sets of equations is of the form
    $\1_B \in \{0,1\}^E$ for some $B \subseteq E$ with $|B| = r$ and, in
    particular, is a real point. For the last set of equations, we note that
    $(\1_B)^C = 0$ for all circuits $C$ if and only if $B$ does not contain a
    circuit. This is equivalent to $B \in \BB$. To show that the ideal is real
    radical, we can argue as in the proof of Proposition~\ref{prop:gen2}: Each
    of the finitely many points is smooth.
\end{proof}

Let us write $\MGen_{ k}$ for the class of matroids $\MM$ with
$\Gen(V_\MM)\le k$. The previous proposition is a little deceiving in the
sense that it suggests a direct connection between the size of circuits and
the degree of generation. This is not quite true. Indeed, let $G = K_4
\setminus e$ be the complete graph on $4$ vertices minus an edge. Then
$\MM(G)$ has a circuit of cardinality $4$ but $\MM(G) \in \MLevels_{ 2}
\subseteq \MGen_{ 2}$ by Theorem~\ref{thm:main1} and
Proposition~\ref{prop:gen2}. The main result of this section is that for base
configurations the condition of Proposition~\ref{prop:gen2} is also
sufficient.

\begin{thm}\label{thm:main2}
    Let $\MM$ be a matroid. Then $V_\MM$ is Theta $1$ if and only if
    $\Gen(V_\MM) \le 2$.
\end{thm}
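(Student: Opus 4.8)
The plan is to prove the two implications separately, with essentially all of the work going into the converse. The forward implication, $\Th(V_\MM)=1 \Rightarrow \Gen(V_\MM)\le 2$, is immediate: it is exactly Proposition~\ref{prop:gen2} applied to the configuration $V=V_\MM$, where we use that $\Gen$ is an affine invariant to pass to the affine hull of $V_\MM$ so that the hypothesis of that proposition is met. I would therefore prove the converse in contrapositive form, namely that a matroid which is not Theta~$1$ satisfies $\Gen(V_\MM)\ge 3$.

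The first step is to reduce the converse to finitely many base cases. I would record that $\MGen_2$ is closed under taking minors, by the same argument that proves Theorem~\ref{thm:minor_closed}: by Proposition~\ref{prop:X-faces}, every minor $\NN$ of $\MM$ is realized, up to affine isomorphism, as a face $V_\MM\cap H$, and since the class $\VGen_2$ is face-hereditary and $\Gen$ is affine-invariant, this gives $\Gen(V_\NN)\le\Gen(V_\MM)$. Hence, if $\MM$ is not Theta~$1$, then by Theorem~\ref{thm:main1} it has a minor isomorphic to one of $\MM(K_4)$, $\mathcal{W}^3$, $Q_6$, or $P_6$, and minor-closedness reduces the claim to showing that each of these four matroids satisfies $\Gen\ge 3$.

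It remains to verify $\Gen(V_\NN)\ge 3$ for the four excluded minors, and this is the crux. Each $\NN$ is a rank-$3$ matroid on six elements, so by Proposition~\ref{prop:matroid_ideal} the ideal $I(V_\NN)$ is generated by the linear form $\ell_E-3$, the quadrics $x_e^2-x_e$, and the circuit monomials $\x^C$, the shortest of which (coming from three-circuits) are cubic. The goal is to show that such a cubic monomial cannot be rewritten modulo the degree-$\le 2$ part of the ideal, i.e.\ that $\x^C\notin\langle I(V_\NN)_{\le 2}\rangle$ for some three-circuit $C$. The naive set-theoretic route — exhibiting a real point in $V(\langle I(V_\NN)_{\le 2}\rangle)$ outside $V_\NN$ — is bound to fail, at least for $\MM(K_4)$: the relations $x_e^2-x_e$ and $\ell_E-3$ already confine the zero set to the three-element subsets of $E$, and a short computation using the symmetry of $K_4$ shows that each dependent three-subset is excluded by some quadric in the ideal, so that $V(\langle I(V_\NN)_{\le 2}\rangle)=V_\NN$ as sets. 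The obstruction is therefore scheme-theoretic: $\langle I(V_\NN)_{\le 2}\rangle$ is a proper, non-radical subideal of $I(V_\NN)$, which I would detect by a direct (and small) computation for each of the four matroids, for instance by showing that the degree-$3$ part of $I(V_\NN)$ is not spanned by products of linear polynomials with elements of $I(V_\NN)$ of degree $\le 2$. This is the main obstacle, and inspecting the four matroids individually is genuinely necessary: as the example $\MM(K_4\setminus e)\in\MLevels_2$ shows, the mere presence of a three-circuit does not by itself force a cubic generator.
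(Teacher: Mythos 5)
Your overall architecture coincides with the paper's: the forward implication is Proposition~\ref{prop:gen2}, and the converse is reduced, via minor-closedness of $\MGen_{2}$ (face-heredity of $\VGen_{2}$ plus Proposition~\ref{prop:X-faces}, exactly as in Theorem~\ref{thm:minor_closed}) and Theorem~\ref{thm:main1}, to showing $\Gen \ge 3$ for the four excluded minors; the paper settles those four cases by a computation with degree-compatible Gr\"obner bases in \emph{Macaulay2}. Up to that point your proposal is fine. The crux, however, contains a factual error that inverts the situation. You claim that for $\MM(K_4)$ every dependent triple is excluded by some quadric in the ideal, so that $V(\langle I_{\le 2}\rangle)=V_\NN$ as sets and the obstruction must be detected scheme-theoretically. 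The opposite is true: the space of polynomials of degree $\le 2$ vanishing on the $16$ spanning trees of $K_4$ is exactly the $13$-dimensional space spanned by $x_e^2-x_e$ for $e\in E$, by $\ell_E(\x)-3$, and by $(\ell_E(\x)-3)x_e$ --- that is, only the relations valid on the \emph{entire} slice $\{\x\in\{0,1\}^6 : \ell_E(\x)=3\}$. Indeed, restricted to weight-$3$ $0/1$-points, polynomials of degree $\le 2$ span the $15$-dimensional space generated by the pair indicators $B\mapsto |\{e,f\}\cap B|=2$, and using the signed vectors $\sum_{i,j,k}(-1)^{i+j+k}\1_{\{a_i,b_j,c_k\}}$ attached to pair-partitions $\{a_1,a_2\},\{b_1,b_2\},\{c_1,c_2\}$ of $E$ (which are orthogonal to all pair indicators) one checks that no nonzero function in this span vanishes on all $16$ trees; hence the evaluation map has rank $15$ and kernel of dimension $28-15=13$, which the listed polynomials already fill. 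Consequently \emph{every} quadric in $I(V_{\MM(K_4)})$ vanishes at $\1_T$ for each triangle $T$, so $\1_T \in V(\langle I_{\le 2}\rangle)\setminus V_\MM$: the naive set-theoretic route you dismissed actually succeeds, since the circuit monomial $\x^T\in I$ does not vanish at $\1_T$. Moreover the same $13$-dimensional space contains the degree-$\le 2$ parts for $\mathcal{W}^3$, $Q_6$, $P_6$ as well (their base configurations contain $V_{\MM(K_4)}$, and each has a $3$-circuit that is not a basis), so one argument disposes of all four minors.

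This misdiagnosis matters doubly, because the fallback you propose is not by itself a valid certificate. The ideal $J=\langle I_{\le 2}\rangle$ is inhomogeneous, and an element of $J$ of degree $3$ need not be a combination with multipliers of degree $\le 1$: higher-degree multipliers can cancel in the top degree. So verifying that the cubics of $I$ do not lie in the span of $(\text{affine forms})\cdot I_{\le 2}$ does not prove that they lie outside $J$; to make the membership test rigorous one must homogenize or compute normal forms with respect to a degree-compatible term order, which is in effect what the paper's \emph{Macaulay2} computation does. In short: your reduction to the four base cases is correct and matches the paper, but the treatment of those base cases rests on a false computation, and the repaired argument is simpler than what you propose --- one exhibits the real point $\1_C$, for a $3$-circuit $C$, in the zero set of $\langle I_{\le 2}\rangle$ but outside $V_\NN$.
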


\begin{proof}
From Proposition~\ref{prop:gen2} we already know that $\MTheta_{1} \subseteq
\MGen_{2}$. Now, if $\MM \in \MGen_{ 2} \setminus \MTheta_{ 1}$, then $\MM$
has a minor isomorphic to $\MM(K_4)$, $P_6$, $Q_6$, or $\mathcal{W}^3$. Since
$\MGen_{ 2}$ is closed under taking minors, the following proposition yields a
contradiction.
\end{proof}

\begin{prop}
$\MM(K_4)$, $\mathcal{W}^3$, $Q_6$, and $P_6$ are not in $\MGen_{ 2}$.
\end{prop}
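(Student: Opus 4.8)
The plan is to exhibit, for each of the four matroids, a single cubic generator that the degree-$\le 2$ part of the ideal cannot account for. By Proposition~\ref{prop:matroid_ideal} the vanishing ideal $I(V_\MM)$ is generated by the relations $x_e^2-x_e$, the linear form $\ell_E-r$, and the circuit monomials $\x^C$; among these only the circuit monomials can have degree larger than $2$. All four matroids are simple of rank $3$ and, in the labelling of Proposition~\ref{prop:minor-list}, share the common $3$-circuit $C=\{1,2,3\}$ (the line that is not deleted), so $x_1x_2x_3\in I(V_\MM)$. I would prove $\MM\notin\MGen_{2}$ by showing that this cubic is not contained in the ideal $J:=\langle\, f\in I(V_\MM):\deg f\le 2\,\rangle$ generated by the degree-$\le 2$ part of $I(V_\MM)$. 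Since $\Gen(V_\MM)\le 2$ is equivalent to $J=I(V_\MM)$, this forces $\Gen(V_\MM)\ge 3$.

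The mechanism is a \emph{ghost point}. Put $p:=\Char{C}$. As $C$ is dependent we have $p\notin V_\MM$, while $x_1x_2x_3(p)=1$. The key claim is that every degree-$\le 2$ polynomial of $I(V_\MM)$ vanishes at $p$; granting this, every generator of $J$ lies in $I(V_\MM\cup\{p\})$, whence $J\subseteq I(V_\MM\cup\{p\})\subsetneq I(V_\MM)$ and in particular $x_1x_2x_3\in I(V_\MM)\setminus J$, which finishes the proof. Writing $\phi(v)=(1,(v_e)_e,(v_ev_f)_{e<f})$ for the degree-$\le 2$ monomial evaluation map, a quadric $q\in I(V_\MM)$ vanishes on every basis, so it vanishes at $p$ precisely when $\phi(p)$ lies in the linear span of $\{\phi(\Char{B}):B\in\BB\}$. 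Thus the whole statement reduces to the linear-algebra assertion
\[
    \phi(\Char{C})\ \in\ \operatorname{span}\{\phi(\Char{B}):B\in\BB\}.
\]

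I expect the verification of this span membership to be the only real work. It is unified across the four matroids by nestedness: deleting a line from the geometric representation turns a dependent triple into a basis, so $\BB(\MM(K_4))\subseteq\BB(\mathcal{W}^3)\subseteq\BB(Q_6)\subseteq\BB(P_6)$ and the corresponding spans are nested; hence it suffices to prove the membership for $\MM(K_4)$, where $C$ remains a circuit. For $\MM(K_4)$ the computation is made tractable by the $S_3$-symmetry stabilising the triangle $\{1,2,3\}$: under it the $16$ spanning trees split into four orbits (the three stars at the triangle's vertices, the star at the remaining vertex, and two orbits of six trees each), and a symmetric choice of coefficients collapses the constant-, linear-, and quadratic-moment conditions to a handful of equations. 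These are solved by weighting the remaining vertex's star by $1$, the two six-element orbits by $+\tfrac12$ and $-\tfrac12$, and the three triangle-vertex stars by $0$. Checking that this assignment reproduces every coordinate of $\phi(\Char{C})$ is the crux of the argument and, once confirmed, completes the proof.
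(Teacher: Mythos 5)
Your proposal is correct, but it takes a genuinely different route from the paper. The paper's proof is a one-liner that delegates everything to machine computation: if $I(V_\MM)$ were generated in degrees $\le 2$, then so would be any Gr\"obner basis with respect to a degree-compatible term order, and \emph{Macaulay2} verifies that a degree-$3$ element is unavoidable. You instead give a computer-free, uniform certificate: the ghost point $p=\Char{\{1,2,3\}}$ at the common $3$-circuit, the duality observation that every quadric of $I(V_\MM)$ vanishes at $p$ if and only if $\phi(p)\in\operatorname{span}\{\phi(\Char{B}):B\in\BB\}$, and the nestedness $\BB(\MM(K_4))\subseteq\BB(\mathcal{W}^3)\subseteq\BB(Q_6)\subseteq\BB(P_6)$ (deleting lines from the geometric representation only creates bases), which reduces all four cases to $\MM(K_4)$ while $x_1x_2x_3$ remains in all four ideals. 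I checked the one computation you deferred, and it goes through exactly as you set it up, provided you put $+\tfrac12$ on the orbit of six paths using \emph{two} triangle edges and $-\tfrac12$ on the orbit using \emph{one}: with star at the apex weighted $1$ and triangle-vertex stars weighted $0$, the moment conditions read $6\cdot\tfrac12-6\cdot\tfrac12+1=1$ (constant), $4\cdot\tfrac12-2\cdot\tfrac12=1$ (triangle edge), $2\cdot\tfrac12-4\cdot\tfrac12+1=0$ (spoke), $2\cdot\tfrac12=1$ (pair of triangle edges), $-2\cdot\tfrac12+1=0$ (pair of spokes), and $\tfrac12-\tfrac12=0$ (mixed pair), as required. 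What each approach buys: the paper's argument is shorter and trivially adapts to any finite configuration, but it is a black-box certificate; yours is longer but human-checkable, treats the four matroids simultaneously, and pinpoints \emph{why} generation fails --- the circuit monomial $\x^C$ of the surviving $3$-circuit cannot be recovered from quadrics because its characteristic vector is a degree-$2$ moment combination of the bases. It is also in the spirit of the separating-polynomial arguments the paper uses elsewhere (Propositions~\ref{prop:cube_gen} and~\ref{prop:W5_theta3}), so it fits the paper's toolkit while removing the software dependency for this proposition.
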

\begin{proof}
    For a point configuration $V \subset \R^n$, let
    $I \subset \R[x_1,\dots,x_n]$ be its vanishing ideal. If $I$ is generated
    in degrees $\le k$, then so is any Gr\"obner basis of $I$ with respect to
    a degree-compatible term order. The claim can now be verified by, for
    example, using the software \emph{Macaulay2}~\cite{M2}.
\end{proof}

\newcommand\PSD{\mathcal{S}}
\subsection*{Psd rank and minimality}
Let $\PSD^m \subset \R^{m \times m}$ be the vector space of symmetric $m \times
m$ matrices. The \Defn{psd cone} is the closed convex cone
$
\PSD^m_+ \ = \ \{ A \in \PSD^m : A \text{ positive semidefinite} \}.
$

\begin{defi}
    A polytope $P \subset \R^d$ has a \Defn{psd-lift} of size $m$ if there is
    an affine subspace $L \subset \PSD^m$ and a linear projection $\pi : \PSD^m
    \rightarrow \R^d$
    such that $  P = \pi(\PSD^m_+ \cap L)$.
     The \Defn{psd rank} $\Psd(P)$ is the size of a smallest psd-lift.
\end{defi} 

Psd-lifts together with lifts for more general cones were introduced by
Gouveia, Parrilo, and Thomas~\cite{GPT2} as natural generalization of
\emph{polyhedral lifts} or \emph{extended formulations}. Let us define
$\VPsd_{ k}$ as the class of point configurations $V$ in convex position
such that $\conv(V)$ has a psd-lift of size $\le k$.  In~\cite{GRT} it was
shown that for a $d$-dimensional polytope $P$ the psd rank is always $\ge
d+1$. A polytope $P$ is called \Defn{psd-minimal} if $\Psd(P) = \dim P +1$. We
write $\VPmin$ for the class of psd-minimal (convex position) point
configurations.

\begin{prop}\label{prop:PSD_closed}
    The classes $\VPsd_{ k}$ and $\VPmin$ are face-hereditary.
\end{prop}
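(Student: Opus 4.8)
The plan is to treat both classes uniformly by restricting a psd-lift to the preimage of the face, and then, for psd-minimality, showing that the lift size drops by exactly one across each facet. For $\VPsd_{k}$, suppose $V\in\VPsd_{k}$, so that $P=\conv(V)=\pi(\PSD^m_+\cap L)$ for some $m\le k$, a linear subspace $L\subseteq\PSD^m$, and a linear projection $\pi$. Let $H=\{x:\ell(x)=0\}$ be a supporting hyperplane with $\ell\ge 0$ on $P$, so that $F:=P\cap H=\conv(V\cap H)$ is the face spanned by $V\cap H$. I would compose $\ell$ with $\pi$ to get a linear functional on $\PSD^m$ that is nonnegative on $\PSD^m_+\cap L$, written as $A\mapsto\langle C,A\rangle$ for a symmetric matrix $C$, and then set $L':=L\cap\{A:\langle C,A\rangle=0\}$, again a linear subspace. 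One checks directly that $\PSD^m_+\cap L'=\{A\in\PSD^m_+\cap L:\ell(\pi(A))=0\}$, so $\pi(\PSD^m_+\cap L')=P\cap H=F$. This exhibits a psd-lift of $F$ of size $m\le k$, whence $V\cap H\in\VPsd_{k}$; in passing it shows $\Psd(F)\le\Psd(P)$.

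For $\VPmin$ this monotonicity is not enough, since we need the size to drop all the way to $\dim F+1$. Since every face of dimension $<\dim P$ is a facet of some face one dimension larger, it suffices, by downward induction on the dimension of the face, to prove the following \emph{facet lemma}: if $Q$ is psd-minimal and $G$ is a facet of $Q$, then $G$ is psd-minimal. Writing $m=\Psd(Q)=\dim Q+1$ and fixing a minimal lift $Q=\pi(\PSD^m_+\cap L)$, the construction above yields $G=\pi(S_G)$ with $S_G:=\PSD^m_+\cap L'$ and $L'=L\cap\ker\langle C,\cdot\rangle$, where now $\langle C,\cdot\rangle=\ell\circ\pi$ for the facet-defining functional $\ell$ of $G$. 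The crux is to show that the spectrahedral slice $S_G$ contains no positive definite matrix: if some $A\in S_G$ had full rank it would lie in the interior of $\PSD^m_+$, so $A+tD\in\PSD^m_+\cap L$ for every $D\in L$ and all small $|t|$; nonnegativity of $\langle C,\cdot\rangle$ on $\PSD^m_+\cap L$ together with $\langle C,A\rangle=0$ would then force $\langle C,D\rangle=0$ for all $D\in L$, i.e. $\ell\equiv 0$ on $P=\pi(\PSD^m_+\cap L)$, contradicting that $\ell$ is facet-defining.

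Granting this, every matrix in $S_G$ has rank at most $m-1$. Taking $W$ to be the range of a relative-interior (hence maximal-rank) element $A^*$ of $S_G$ and using that $A^*+\epsilon(A^*-A)\succeq 0$ for small $\epsilon>0$, I would conclude $\mathrm{range}(A)\subseteq W$ for all $A\in S_G$. Thus $S_G$ lies inside the face $\{A\succeq 0:\mathrm{range}(A)\subseteq W\}\cong\PSD^{\dim W}_+$ of $\PSD^m_+$, and restricting $\pi$ produces a psd-lift of $G$ of size $\dim W\le m-1=\dim G+1$. Together with the lower bound $\Psd(G)\ge\dim G+1$ from~\cite{GRT} this gives $\Psd(G)=\dim G+1$, so $G$ is psd-minimal and the induction closes. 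The hard part will be exactly the no-full-rank claim; once one knows $S_G$ sits in a proper face of the psd cone, the size reduction and the dimension bookkeeping are routine.
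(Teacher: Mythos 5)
Your proof is correct, and its first half is exactly the paper's argument for $\VPsd_k$: the paper takes the lift $(L\cap\pi^{-1}(H),\pi)$ of $P\cap H$, which is the same slicing of the lift by the zero set of $\ell\circ\pi$ that you perform (both versions share the same harmless gloss, since $\ell\circ\pi$ is affine rather than linear when $L$ is taken to be a linear subspace). The difference lies in the $\VPmin$ part: the paper disposes of the facet case in one line by citing \cite[Prop.~3.8]{GRT}, which asserts precisely the strict drop $\Psd(P)\ge\Psd(F)+1$ across a facet, and leaves the downward induction over the face lattice implicit, whereas you re-derive that inequality from scratch by facial reduction. Your derivation is sound: a positive definite point of the slice would force the linear part of $\ell\circ\pi$ to vanish on all of $L$, making $\ell$ identically zero on $P$ and contradicting that $\ell$ is facet-defining; the convex-combination argument ($A^*=\frac{1}{1+\epsilon}\bigl((A^*+\epsilon(A^*-A))+\epsilon A\bigr)$, and $x^{\top}A^*x=0$ forcing $x^{\top}Ax=0$ for psd summands) correctly shows every range is contained in the range $W$ of a relative-interior point; and confining the slice to the face $\{A\succeq 0:\mathrm{range}(A)\subseteq W\}\cong\mathcal{S}^{\dim W}_+$ with $\dim W\le m-1$ yields the desired lift of $G$ of size $\le\dim Q=\dim G+1$. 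Note that your no-positive-definite-point argument never uses that $G$ is a facet, only that $\ell\not\equiv 0$ on $Q$, so you have in effect re-proved the cited GRT proposition for arbitrary proper faces. What the paper's route buys is brevity; what yours buys is a self-contained proof that makes the source of the rank drop transparent and makes explicit the induction (every face of a polytope is an iterated facet) that the paper's two-line proof leaves to the reader. The only bookkeeping worth a remark is the degenerate case $\dim W=0$ (a vertex lifted to $\{0\}$), where one should use the convention $\Psd(\mathrm{point})=1\le m-1$, harmless since $m\ge 2$ whenever $\dim Q\ge 1$.
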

\begin{proof}
    Let $V \in \VPsd_{ k}$ and let $(L,\pi)$ be a psd-lift of $P=\conv(V)$.
    For a supporting hyperplane $H$ we observe that $(L \cap \pi^{-1}(H),
    \pi)$ is a psd-lift of $P \cap H$ of size $m\leq k$.

    Let $P$ be psd-minimal and let $F = P \cap H$ a face of dimension $\dim F
    = \dim P -1$. If $F$ is not psd-minimal, then by~\cite[Prop.~3.8]{GRT},
    $\Psd(P) \ge \Psd(F)+1 > \dim F + 2 = \dim P + 1$.
\end{proof}

A characterization of psd-minimal polytopes in small
dimensions was obtained in~\cite{GRT} and, in particular, the following
relation was shown.
\begin{prop}[{\cite[Cor.~4.2]{GRT}}]\label{prop:psd}
    Let $V$ be a point configuration in convex
    position. If $\Th(V) = 1$, then $P = \conv(V)$ is psd-minimal.
\end{prop}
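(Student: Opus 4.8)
The plan is to produce a psd-lift of $P=\conv(V)$ of size exactly $\dim P+1$. Since~\cite{GRT} supplies the lower bound $\Psd(P)\ge \dim P+1$ for every polytope, any such lift forces $\Psd(P)=\dim P+1$, which is precisely psd-minimality. The lift I would use is the canonical spectrahedral representation of the \emph{first theta body} $\ThetaBody_1(I)$ of the vanishing ideal $I=I(V)$.

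First I would recall the defining link between Theta rank and theta bodies from~\cite{GPT}: one always has $\ThetaBody_k(I)\supseteq \conv(V)$, with equality if and only if every linear function non-negative on $V$ admits a degree-$k$ sum-of-squares certificate modulo $I$, that is, is $k$-sos. By the very definition of the Theta rank, the hypothesis $\Th(V)=1$ is therefore equivalent to $\ThetaBody_1(I)=\conv(V)=P$.

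Next I would write down the psd-lift that $\ThetaBody_1(I)$ carries. By sum-of-squares/moment duality, $\ThetaBody_1(I)$ is the image under the coordinate projection $y\mapsto (y_{x_1},\dots,y_{x_d})$ of the spectrahedral slice
\[
    \{\, y : M_1(y)\succeq 0,\ y_0=1,\ y|_{I}=0 \,\},
\]
where $M_1(y)$ is the degree-one moment matrix whose rows and columns are indexed by a basis of the degree-$\le 1$ part of the coordinate ring $\R[\x]/I$, and $y|_{I}=0$ records the finitely many linear conditions on degree-$\le 2$ moments imposed by $I$. Because $V$ affinely spans its ambient space, the classes of $1,x_1,\dots,x_d$ are linearly independent modulo $I$, so this index set has exactly $\dim P+1$ elements; hence $M_1(y)$ is a symmetric matrix of size $\dim P+1$, and the pair $(L,\pi)$, with $L$ the affine slice cut out by $y_0=1$ and $y|_{I}=0$ and $\pi$ reading off the linear entries $y_{x_i}$ of $M_1(y)$, is a psd-lift of $\ThetaBody_1(I)=P$ of size $\dim P+1$.

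The only real obstacle is checking that the index set of $M_1(y)$ does not collapse below $\dim P+1$, that is, that no non-trivial affine relation holds on $V$ modulo $I$; this is exactly the assumption that $V$ spans its affine hull (after restricting to that hull if $V$ is not full-dimensional). Granting this, I obtain $\Psd(P)\le \dim P+1$, and combining with the lower bound of~\cite{GRT} gives $\Psd(P)=\dim P+1$, so $P$ is psd-minimal.
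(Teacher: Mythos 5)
Your proof is correct, but it takes a genuinely different route from the paper. In fact, the paper does not prove Proposition~\ref{prop:psd} at all: it is quoted from~\cite{GRT}, and the argument there (whose machinery the paper sets up immediately afterwards for the converse direction) goes through slack matrices. Namely, by Theorem~\ref{thm:GPT} the hypothesis $\Th(V)=1$ makes $P$ a $2$-level polytope, so after positively rescaling each facet-defining functional the slack matrix $S_P$ has entries in $\{0,1\}$; a $0/1$-matrix is its own Hadamard square root, whence $\Hrk(S_P)\le\rk(S_P)=\dim P+1$, and the characterization $\Psd(P)=\dim P+1 \Leftrightarrow \Hrk(S_P)=\dim P+1$ of~\cite[Thm.~3.5]{GRT} yields psd-minimality. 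Your moment-matrix lift of $\ThetaBody_1(I)$ instead works directly from the definition of Theta rank, bypassing $2$-levelness and slack matrices entirely, and has the merit of exhibiting the size-$(\dim P+1)$ spectrahedral lift explicitly; both routes ultimately combine an explicit lift (yours a slice of the psd cone by moment conditions, theirs the rank-one factorization coming from a minimal-rank Hadamard square root) with the same lower bound $\Psd(P)\ge\dim P+1$ from~\cite{GRT}. One step you should make explicit: in general $\ThetaBody_1(I)$ is only the \emph{closure} of the projection of the moment slice, so the assertion that your pair $(L,\pi)$ is a lift of $\ThetaBody_1(I)=P$ needs the sandwich $P\subseteq\pi(L\cap\{M\succeq 0\})\subseteq\ThetaBody_1(I)$. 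The left inclusion holds because each $v\in V$ produces the genuine moment matrix $m(v)m(v)^{\top}$ with $m(v)=(1,v)$, which lies in the slice, and the slice is convex; the right inclusion holds because if $\ell$ is $1$-sos modulo $I$, say $\ell-\sum_i h_i^2\in I\cap\R[\x]_{\le 2}$ with $\deg h_i\le 1$, then any feasible $y$ satisfies $\ell(\pi(y))=\sum_i h_i^{\top}M_1(y)\,h_i\ge 0$. Since $\Th(V)=1$ gives $\ThetaBody_1(I)=P$, the sandwich collapses and no closure issue remains; with that sentence added, your argument is complete.
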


In~\cite{GRT} an example of a psd-minimal polytope that is not $2$-level is
given, showing that the condition above is sufficient but not necessary.
The main result of this section is that the situation is much better for base
configurations.

\begin{thm}\label{thm:main3}
    Let $\MM$ be a matroid. The base polytope $P_\MM = \conv(V_\MM)$ is
    psd-minimal if and only if $\Th(\MM) = 1$.
\end{thm}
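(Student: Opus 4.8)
The plan is to prove both implications of Theorem~\ref{thm:main3}, namely that for a matroid $\MM$, the base polytope $P_\MM$ is psd-minimal if and only if $\Th(\MM) = 1$. One direction is essentially free: if $\Th(\MM) = 1$, then $V_\MM$ is a point configuration in convex position of Theta rank $1$, and Proposition~\ref{prop:psd} immediately gives that $P_\MM$ is psd-minimal. So the entire content lies in the converse.

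For the converse, I would argue by contradiction using the excluded-minor characterization from Theorem~\ref{thm:main1} together with the face-hereditary behavior of psd-minimality from Proposition~\ref{prop:PSD_closed}. Suppose $P_\MM$ is psd-minimal but $\Th(\MM) \neq 1$. Then by Theorem~\ref{thm:main1}, $\MM$ has a minor isomorphic to one of $\MM(K_4)$, $\mathcal{W}^3$, $Q_6$, or $P_6$. By Theorem~\ref{thm:minor\_closed} (applied at the geometric level via Proposition~\ref{prop:X-faces}), such a minor appears as a face $V_\MM \cap H$ of the base configuration, and since $\VPmin$ is face-hereditary by Proposition~\ref{prop:PSD_closed}, psd-minimality descends to the base polytope of that forbidden minor. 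Thus it suffices to establish that none of $\MM(K_4)$, $\mathcal{W}^3$, $Q_6$, $P_6$ has a psd-minimal base polytope, which then contradicts our assumption.

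The crux, therefore, is to verify that the four base polytopes $P_{\MM(K_4)}$, $P_{\mathcal{W}^3}$, $P_{Q_6}$, $P_{P_6}$ are \emph{not} psd-minimal, i.e.\ that each has psd rank strictly greater than $\dim P + 1$. This is the step I expect to be the main obstacle, since the lower bounds from~\cite{GRT} give $\Psd(P) \ge \dim P + 1$ but proving a strict inequality requires a genuine obstruction. Each of these four polytopes is low-dimensional (dimension $3$ for the rank-$3$ matroids on $\le 6$ elements, after accounting for the affine hull), so I would lean on the classification of psd-minimal polytopes in small dimensions obtained in~\cite{GRT}: one checks that each of the four base polytopes, together with its combinatorial and geometric data, fails the criteria characterizing psd-minimal polytopes in that dimension. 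Concretely, I expect the cleanest route is to identify each $P_\MM$ explicitly (using the geometric representations already drawn) and certify non-psd-minimality either by direct appeal to the dimension-specific classification in~\cite{GRT} or by exhibiting the combinatorial feature that obstructs a minimal psd-lift.

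One subtlety to address carefully is that face-hereditariness of $\VPmin$ is stated for faces cut out by supporting hyperplanes, and one should confirm that the induction on single-element minors from Theorem~\ref{thm:minor\_closed} produces precisely such faces; this is guaranteed by Proposition~\ref{prop:X-faces}, which realizes every minor as $V_\MM \cap H_\MM(X)$. With that observation in place, the logical skeleton is straightforward and the entire difficulty is concentrated in the finite verification for the four forbidden minors, which can ultimately be settled by the small-dimensional psd-minimality results of~\cite{GRT}.
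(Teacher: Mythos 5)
Your overall skeleton matches the paper's: the forward direction is indeed immediate from Proposition~\ref{prop:psd}, and the converse is correctly reduced, via Theorem~\ref{thm:main1}, Proposition~\ref{prop:X-faces}, and the face-hereditariness of $\VPmin$ (Proposition~\ref{prop:PSD_closed}), to showing that the four excluded minors $\MM(K_4)$, $\mathcal{W}^3$, $Q_6$, $P_6$ do not have psd-minimal base polytopes. However, there is a genuine gap at exactly the step you yourself identify as the crux. You assert that these base polytopes are $3$-dimensional ``after accounting for the affine hull,'' but this confuses the rank of the matroid with the dimension of the polytope: all four matroids are connected on $6$ elements, so by Proposition~\ref{prop:V_dim} each base polytope has dimension $|E| - c(\MM) = 5$. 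Consequently the classification of psd-minimal polytopes in small dimensions from~\cite{GRT}, which only reaches dimension $4$, simply does not apply, and your proposed route to certifying non-psd-minimality has no support. This is not a cosmetic issue: with a $5$-dimensional polytope one must rule out $\Psd(P_\MM) = 6$ by some other means.

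The paper closes this gap with the Hadamard square root criterion from~\cite{GRT}: $P$ is psd-minimal if and only if $\Hrk(S_P) = \dim P + 1$, which here means one must show $\Hrk(S_{P_\MM}) \ge 7$. Concretely, the paper extracts a $7 \times 7$ submatrix $A$ of the slack matrix (indexed by an explicit choice of seven bases and seven flacets, the same choice working for all four matroids), and shows that \emph{every} Hadamard square root of $A$ is nonsingular. After Laplace expansions exploiting the zero pattern and splitting the determinant into rational and $\sqrt{2}$-parts, this reduces to showing that every $\pm 1$ Hadamard square root of the $4 \times 4$ matrix $A_0$ (all ones off the diagonal, zeros on it) is nonsingular, which is Proposition~\ref{prop:tech} and is certified by a Macaulay2 computation showing the relevant ideal contains $1$. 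If you want to repair your proposal, you would need to supply an argument of this kind — or some other genuine lower bound on $\Psd$ in dimension $5$ — since no appeal to a low-dimensional classification is available.
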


In light of Proposition~\ref{prop:psd} it remains to show that there is no
psd-minimal matroid $\MM$ with $\Th(\MM) > 1$. Since $\VPmin$ is
face-hereditary, it is sufficient to show that the excluded minors
$\MM(K_4)$, $\mathcal{W}^3$, $Q_6$, and $P_6$ are not psd-minimal. 

In order to do so, we need to recall the connection to slack
matrices and Hadamard square roots developed in~\cite{GRT}. For a more
coherent picture of the relations in particular to cone factorizations we
refer to the papers~\cite{GPT2,GRT}.
Let $P$ be be a
polytope with vertices $v_1,\dots,v_t$ and facet-defining affine functions
$\ell_j(\x) = \beta - \langle a_j, \x\rangle$ for $j = 1,\dots,f$.
The \Defn{slack matrix} of $P$ is the non-negative matrix
$S_P\in\mathbb{R}^{t\times f}$ with
\[
    (S_P)_{ij}=\beta_j-\langle a_j,v_i\rangle
\]
for $i=1,\dots,t$ and $j=1,\dots,f$. A \Defn{Hadamard square root} of $S_P$ is
a matrix $H \in \R^{t \times f}$ such that $(S_P)_{ij} = H^2_{ij}$ for all
$i,j$. Moreover, we define $\Hrk S_P$ as the smallest rank among all
Hadamard square roots. The following is the main connection between Hadamard
square roots and the psd-rank.

\begin{thm}[{\cite[Thm.~3.5]{GRT}}]\label{thm:hsrpsdmin}
    A polytope $P$ is psd-minimal if and only if \[
    \Hrk(S_P) \ = \  \dim P + 1.\]
\end{thm}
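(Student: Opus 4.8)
The statement is an equivalence, and I would prove it through the intermediate notion of \emph{rank-one psd factorizations} of the slack matrix. Recall first from~\cite{GPT2} that $\Psd(P)$ equals the smallest $m$ for which $S_P$ admits a \emph{psd factorization} $(S_P)_{ij}=\langle A_i,B_j\rangle$ with $A_i,B_j\in\PSD^m_+$; this is the slack-matrix reformulation of the psd-lift definition. The plan is to observe that rank-one psd factorizations are exactly Hadamard square roots. Indeed, if every $A_i=a_ia_i^\top$ and $B_j=b_jb_j^\top$ is of rank one, then $(S_P)_{ij}=\langle a_i,b_j\rangle^2$, so the matrix $H_{ij}:=\langle a_i,b_j\rangle$ is a Hadamard square root of $S_P$ and, being a product $H=AB^\top$ of a $t\times m$ and an $f\times m$ matrix, has rank at most $m$; conversely a Hadamard square root $H$ of rank $r$ factors as $H=AB^\top$ with $A,B$ having $r$ columns, and the rows $a_i,b_j$ yield rank-one psd factors $a_ia_i^\top,b_jb_j^\top\in\PSD^r_+$.

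With this dictionary the easy implication is immediate. Suppose $\Hrk(S_P)=\dim P+1$ and let $H$ be a Hadamard square root of this rank. The construction above turns it into a psd factorization of size $\dim P+1$, so $\Psd(P)\le\dim P+1$. Since~\cite{GRT} gives the universal lower bound $\Psd(P)\ge\dim P+1$, we conclude $\Psd(P)=\dim P+1$, i.e.\ $P$ is psd-minimal. In fact this argument shows the chain $\Hrk(S_P)\ge\Psd(P)\ge\dim P+1$ holds for every polytope, so $\dim P+1$ is always the minimum possible value of $\Hrk(S_P)$; the content of the theorem is that this minimum is attained \emph{exactly} for psd-minimal polytopes.

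For the converse I would assume $P$ psd-minimal and fix a psd factorization $(S_P)_{ij}=\langle A_i,B_j\rangle$ of the minimal size $m=\dim P+1$; by the dictionary it suffices to prove that all factors can be taken of rank one. The mechanism I expect to drive the proof is the following: a vertex $v_i$ lies on a facet $j$ precisely when $(S_P)_{ij}=0$, and for positive semidefinite $A_i,B_j$ the vanishing of $\langle A_i,B_j\rangle=\mathrm{tr}(A_iB_j)$ forces $A_iB_j=0$, i.e.\ $\operatorname{range}(A_i)\perp\operatorname{range}(B_j)$. A vertex of a $\dim P$-dimensional polytope lies on facets whose normals span a space of dimension $\dim P$, so $\operatorname{range}(A_i)$ is orthogonal to a whole collection of ranges $\operatorname{range}(B_j)$; once one knows these ranges together span a $(\dim P)$-dimensional subspace of $\R^{m}=\R^{\dim P+1}$, the orthogonal complement is one-dimensional and $A_i$ is forced to be rank one. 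The symmetric argument applied to facets, each of which contains enough vertices, then controls the $B_j$.

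The step I expect to be the genuine obstacle is exactly establishing this spanning condition: that in a \emph{minimal}-size factorization the relevant ranges $\operatorname{range}(B_j)$ (respectively $\operatorname{range}(A_i)$) really do fill out a $(\dim P)$-dimensional space rather than collapsing into something smaller. This is where minimality of the factorization together with the fact that $\rk S_P=\dim P+1$ must be exploited, and it has to be argued simultaneously on both sides to avoid circularity. Once the rank-one property of all factors is secured, the dictionary of the first paragraph immediately produces a Hadamard square root of $S_P$ of rank at most $\dim P+1$; combined with the universal bound $\Hrk(S_P)\ge\dim P+1$ this gives $\Hrk(S_P)=\dim P+1$ and completes the equivalence.
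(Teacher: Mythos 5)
This theorem enters the paper only as a quotation of \cite[Thm.~3.5]{GRT}; the paper supplies no proof, so your attempt must be measured against the source. Your dictionary between Hadamard square roots and rank-one psd factorizations is correct in both directions, and your ``easy'' implication is complete: a Hadamard square root of rank $\dim P+1$ produces a psd factorization of that size, which together with the universal bound $\Psd(P)\ge \dim P+1$ from \cite{GRT} gives psd-minimality, and the chain $\Hrk(S_P)\ge \Psd(P)\ge \dim P+1$ is correctly derived. But the converse is where the entire content of the theorem sits, and you have not proved it --- you have only located the obstacle. The assertion that in a size-$(\dim P+1)$ factorization the ranges $\operatorname{range}(B_j)$, over the facets $j$ through a fixed vertex, span a $(\dim P)$-dimensional subspace of $\R^{\dim P+1}$ is not a deferrable verification: the $d$-dimensional span of the active facet normals lives in the ambient $\R^d$ and has no a priori image under the assignment $j\mapsto \operatorname{range}(B_j)$, so nothing in the definition of the factorization prevents these ranges from collapsing. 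As written, your proposal establishes one implication of an ``if and only if.''

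For comparison, the argument in \cite{GRT} closes this gap by inducting on \emph{facets} rather than working at a vertex: for a facet $F$, the vanishing slacks give $\operatorname{tr}(A_iB_F)=0$ for every vertex $v_i\in F$, so the vertex factors of $F$ can be compressed into $\operatorname{range}(B_F)^{\perp}$, yielding an induced factorization of $S_F$ of size $\dim P+1-\rk(B_F)$ and hence $\Psd(P)\ge \Psd(F)+\rk(B_F)$ --- this is the mechanism behind \cite[Prop.~3.8]{GRT}, the very inequality invoked in Proposition~\ref{prop:PSD_closed} of this paper. In the psd-minimal case, induction on dimension forces $\rk(B_F)=1$ for every facet, and the vertex factors are then handled by running the same argument on the transposed slack matrix (equivalently, on the polar polytope), which also dissolves the circularity you worried about: the two sides are treated sequentially, not simultaneously. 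Only after all factors are known to be rank one does your (correct) dictionary deliver the Hadamard square root of rank $\dim P+1$. You should either carry out this facet induction or supply a substitute for the spanning claim; without it the proof is genuinely incomplete.
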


The matroid base polytopes of $\MM(K_4),\mathcal{W}^3, Q_6, P_6$ are all of
dimension $5$. Thus, to complete the proof of Theorem~\ref{thm:main3}, it
suffices to show that for each of these four matroids, the Hadamard square
roots of the corresponding slack matrices have rank at least $7$. Using
Proposition~\ref{prop:PSD_closed}, this then implies that every matroid that
has a minor isomorphic to $\MM(K_4),\mathcal{W}^3, Q_6,$ or $P_6$ is not
psd-minimal.  We start with a technical result.

\begin{prop}\label{prop:tech}
    The matrix
\[
    A_0 \ =  \ \begin{pmatrix}
        0 & 1 & 1 & 1 \\
        1 & 0& 1 & 1 \\
        1 & 1 & 0 & 1 \\
        1 & 1 & 1 & 0 
        \end{pmatrix}
\]
has $\Hrk A_0 = 4$.
\end{prop}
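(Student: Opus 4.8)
The plan is to prove the stronger statement that \emph{every} Hadamard square root of $A_0$ is invertible; since $A_0$ is $4\times 4$ this forces the rank of each Hadamard square root to be the maximal value $4$, so in particular the minimum, namely $\Hrk A_0$, equals $4$. Because the entries of $A_0$ lie in $\{0,1\}$, any Hadamard square root $H$ must satisfy $H_{ii}=0$ on the diagonal and $H_{ij}\in\{+1,-1\}$ off the diagonal (no symmetry of $H$ is assumed). There are thus $2^{12}$ candidate matrices, and it suffices to show that none of them is singular, i.e.\ that $\det H\neq 0$ in every case.

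Rather than enumerate sign patterns, I would exploit a parity invariant. The determinant is a polynomial with integer coefficients in the entries of $H$, so its reduction modulo $2$ depends only on the reductions of those entries. Each off-diagonal entry, whether $+1$ or $-1$, reduces to $1$ in $\mathbb{F}_2$, so $H$ reduces to $J-I$ over $\mathbb{F}_2$, where $J$ denotes the $4\times 4$ all-ones matrix. Consequently
\[
    \det H \ \equiv \ \det(J-I) \pmod 2
\]
for \emph{every} Hadamard square root $H$, independently of the chosen signs. This collapses all $2^{12}$ sign choices into a single parity computation.

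It then remains to compute $\det(J-I)$ over $\mathbb{Z}$. Since the $4\times 4$ all-ones matrix $J$ has eigenvalues $4,0,0,0$, the matrix $J-I$ has eigenvalues $3,-1,-1,-1$, so $\det(J-I)=3\cdot(-1)^3=-3$, which is odd. Therefore $\det H$ is odd, and in particular nonzero, for every Hadamard square root $H$ of $A_0$; each such $H$ has full rank $4$, and hence $\Hrk A_0 = 4$. The only conceptual step is the reduction modulo $2$, which is also precisely what rules out the worry that some clever (possibly asymmetric) sign pattern might secretly lower the rank; once this observation is in place the remaining computations are routine, and I anticipate no real obstacle.
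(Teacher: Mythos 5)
Your proof is correct, and it takes a genuinely different route from the paper's. The paper argues computationally: it writes a generic Hadamard square root $H$ with zero diagonal and off-diagonal indeterminates $y_1,\dots,y_{12}$ subject to $y_i^2=1$, and uses \emph{Macaulay2} to certify that $1 \in \langle y_1^2-1,\dots,y_{12}^2-1,\det H\rangle \subseteq \mathbb{C}[y_1,\dots,y_{12}]$, i.e.\ that no choice of signs makes $H$ singular. Your parity argument replaces this Gr\"obner-basis certificate with a hand computation: since every real solution of $H_{ij}^2=(A_0)_{ij}$ has integer entries (which you correctly establish first, and which is exactly what licenses the reduction), $\det H \equiv \det(J-I) \equiv -3 \equiv 1 \pmod 2$, so $\det H$ is odd and in particular nonzero for all $2^{12}$ sign patterns at once. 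What your approach buys: it is elementary, machine-free, explains \emph{why} the result holds, and generalizes immediately to zero-diagonal sign matrices of any even size $n$, where $\det(J-I)=(-1)^{n-1}(n-1)$ is odd (for odd $n$ the parity vanishes and singular sign patterns do in fact exist, so the restriction is sharp). What the paper's approach buys: the ideal-membership computation is a flexible general-purpose tool that would in principle apply even when entries of the slack matrix are not $0/1$; indeed, in the companion proposition an entry equal to $2$ produces a $\pm\sqrt{2}$ in the Hadamard square root, and there the authors use Laplace expansion to reduce to $A_0$ --- your mod-$2$ trick would require that same preliminary reduction, since it needs integrality. As a minor aside, the paper's certificate works over $\mathbb{C}$, but this adds nothing here: $y_i^2=1$ forces $y_i=\pm 1$ over $\mathbb{C}$ as well, so your argument covers the same ground.
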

\begin{proof}
    Every Hadamard square root of $A_0$ is of the form
    \[
        H \  = \ \begin{pmatrix}
            0 & y_1 & y_2 & y_3 \\
            y_4 & 0 & y_5 & y_6 \\
            y_7 & y_8 & 0 & y_9 \\
            y_{10} & y_{11} & y_{12} & 0 
        \end{pmatrix}
        \]
    with $y_i^2=1$, $i=1,..,12$.  Claiming that $\Hrk A_0 = 4$ is
    equivalent to the claim that every Hadamard square root $H$ is
    non-singular. Using the computer algebra software
    \emph{Macaulay2}~\cite{M2} it can be checked that the ideal
    \[
        I \ =  \ \langle y_1^2-1,...,y_{12}^2-1, \det H \rangle \ \subseteq \
        \mathbb{C}[y_1,\dots,y_{12}]
    \]
    contains $1$ which excludes the existence of a rank-deficient Hadamard
    square root.
\end{proof}

\begin{prop}
    Let $P = P_\MM$ the base polytope for $\MM \in \{
    \MM(K_4), \mathcal{W}^3, Q_6, P_6\}$. Then $\Hrk (S_P) \geq
    7$.
\end{prop}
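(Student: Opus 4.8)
The plan is to produce, for each of the four matroids $\MM\in\{\MM(K_4),\mathcal{W}^3,Q_6,P_6\}$, an explicit $7\times 7$ submatrix $M$ of the slack matrix $S_P$ and to prove that \emph{every} Hadamard square root of $M$ is non-singular. Since restricting any Hadamard square root of $S_P$ to the chosen seven rows and seven columns gives a Hadamard square root of $M$, this immediately forces $\Hrk(S_P)\ge 7$. First I would make the slack matrices completely explicit. The vertices of $P_\MM$ are the indicator vectors of the bases, and by the description of the facets of $V_\MM$ together with Proposition~\ref{prop:only_flacets}, the facets are the flacets and the sets $E\setminus e$; the latter are $2$-level while the $3$-level behaviour comes from the rank-$2$ flacets. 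Consequently every entry of $S_P$ lies in $\{0,1,2\}$, and the incidences are easy to tabulate.

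The key reduction is to select seven vertices and seven facets so that, after reordering rows and columns, the submatrix takes the block-triangular form
\[
    M \ = \ \begin{pmatrix} A_0 & * \\ 0 & D \end{pmatrix},
\]
where $A_0$ is the $4\times 4$ matrix of Proposition~\ref{prop:tech} and $D$ is a $3\times 3$ triangular block with non-zero diagonal. Concretely, I would pick four vertices and four facets realizing the incidence pattern of $A_0$ (each of the four vertices lying on exactly one of the four facets, all remaining slacks equal to $1$), and then three further facets all containing the four $A_0$-vertices together with three further vertices whose slacks against the new facets form a triangular pattern with non-zero diagonal. The abundance of facets and vertices ($16$ vertices and $10$ facets for $\MM(K_4)$, and comparable counts for the others) makes such a selection available, and confirming the pattern is a finite check done by hand or in \emph{Macaulay2}.

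Granting such an $M$, the conclusion is automatic. A zero entry of $S_P$ forces the corresponding entry of any Hadamard square root to vanish, so every Hadamard square root $H$ of $M$ inherits the same block-triangular shape, whence $\det H=\det(H_{11})\cdot\det(H_{22})$. The block $H_{11}$ is a Hadamard square root of $A_0$ and is non-singular by Proposition~\ref{prop:tech}, while $H_{22}$ is a Hadamard square root of $D$ and is therefore triangular with non-zero diagonal, hence non-singular. Thus $\det H\neq 0$, so $\Hrk(M)=7$ and $\Hrk(S_P)\ge 7$ follows.

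The main obstacle I anticipate lies entirely in the second step: ensuring that all the required incidences coexist in one $7\times 7$ submatrix. Demanding that three extra vertices lie on all four $A_0$-facets is restrictive in a $5$-dimensional polytope, so I would keep the argument flexible by allowing the complementary block $D$ to be \emph{any} $3\times 3$ block whose Hadamard square roots are provably non-singular, certified by a small determinantal computation exactly as in the proof of Proposition~\ref{prop:tech}. I would also exploit that $\mathcal{W}^3$, $Q_6$, and $P_6$ arise from $\MM(K_4)$ by successively deleting lines of the geometric representation, so that a single template submatrix can be tracked across all four cases rather than rediscovered each time. Should the structured choice prove too rigid, the fallback is a direct \emph{Macaulay2} verification that the relevant $7\times 7$ slack submatrix admits no rank-deficient Hadamard square root, in the same spirit as Proposition~\ref{prop:tech}.
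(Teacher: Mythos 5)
Your reduction steps are individually sound---restricting a Hadamard square root of $S_P$ to a $7\times 7$ submatrix $M$ gives a Hadamard square root of $M$, zero slack entries force zero entries in any root, and a block-triangular root factors as $\det H=\det(H_{11})\det(H_{22})$---but the structured submatrix your whole proof rests on \emph{cannot exist}, and the obstacle you flagged as merely ``restrictive'' is in fact fatal. All four base polytopes are $5$-dimensional ($\dim P_\MM=|E|-c(\MM)=6-1=5$), and a $3\times 4$ or $4\times 3$ zero block in a slack matrix has rigid geometric consequences. In your displayed orientation the zero block says three extra vertices lie on all four $A_0$-facets. Any three distinct vertices of a polytope are affinely independent (a collinear triple would make the middle point a convex combination of the other two), so they span a $2$-flat $\Pi$; the affine functions on the $5$-dimensional affine hull vanishing on $\Pi$ form a space of dimension $5-2=3$, so the four slack functions $s_1,\dots,s_4$ satisfy a nontrivial dependence $\lambda_1 s_1+\cdots+\lambda_4 s_4\equiv 0$. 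Evaluating at the four $A_0$-vertices gives $A_0\lambda=0$, contradicting $\det A_0=-3\neq 0$. In the transposed orientation that your prose actually describes (three facets containing the four $A_0$-vertices), nonsingularity of $A_0$ forces those four vertices to be affinely independent (an affine dependence $\alpha$ with $\sum_j\alpha_j=0$ would yield $\alpha^{T}A_0=0$), so they span a $3$-flat; the three new slack functions then lie in a $2$-dimensional space, and evaluating the resulting dependence $\mu$ at the three new vertices gives $D\mu=0$, contradicting your triangular block with nonzero diagonal. So no choice of vertices and facets---in these polytopes or in any $5$-dimensional polytope---realizes your template, and replacing $D$ by an arbitrary certified $3\times 3$ block does not help, since the contradiction only uses the zero block and the nonsingularity of $A_0$.

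For comparison, the paper's proof shares your first move (one explicit $7\times 7$ submatrix, chosen from bases and flacets common to all four matroids so a single template covers every case) but deliberately avoids any zero block. Instead it exploits the unique slack entry equal to $2$: a Hadamard root carries $\pm\sqrt{2}$ there, the determinant has the form $a+b\sqrt{2}$ with $a,b\in\mathbb{Z}$, and it suffices to show $b\neq 0$; successive Laplace expansions, first at an entry that is alone in its column and then at the lone $\sqrt{2}$ entry, reduce $b$ (up to sign) to the determinant of a Hadamard square root of $A_0$, which Proposition~\ref{prop:tech} certifies never vanishes. Your stated fallback---verify directly in \emph{Macaulay2} that some explicit $7\times 7$ slack submatrix admits no rank-deficient Hadamard root---is viable in principle (it is the computation of Proposition~\ref{prop:tech} with an additional variable $z$ subject to $z^2=2$), but as written you neither exhibit such a submatrix nor carry out the check, and the structural principle you propose for finding one is the impossible template above. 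As it stands, the proof is incomplete.
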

\begin{proof}
      We explicitly give the argument for $\MM = \MM(K_4)$ and $P = P_\MM$. This proof works also for the other matroids for the same choice of the collection of bases and flacets. It will be sufficient to find a $7 \times 7$-submatrix $A$ of
    $S_P$ with $\Hrk(N) \ge 7$. Consider the following collection of bases and
    flacets of $\MM$:
\[
        \begin{array}{l@{ \ \ = \ \ }l@{\hspace{1cm}}l@{ \ \ = \ \ }l}
            B_1 & \{1,2,4\} & F_1 & \{ 1 \}\\
            B_2 & \{1,2,5\} & F_2 & \{ 2 \}\\
            B_3 & \{1,2,6\} & F_3 & \{ 3 \}\\
            B_4 & \{1,3,6\} & F_4 & \{ 4 \}\\
            B_5 & \{1,4,6\} & F_5 & \{ 5 \}\\
            B_6 & \{1,5,6\} & F_6 & \{ 6 \}\\
            B_7 & \{2,4,6\} & F_7 & \{ 3,4,6 \}\\
        \end{array}
    \]
   It is straightforward to verify by means of Theorem \ref{thm:matroidfacets}
   that the subsets $F_1, \ldots, F_7$ are flacets of $\MM$. Notice that the
   list is not complete, but to prove the claim, it suffices to find a
   suitable submatrix of the complete slack matrix. Consider the induced
   submatrix of $S_P$
    \[
        A \ = \ 
        \bordermatrix{ %
        ~& {\scriptstyle \{1\}} & {\scriptstyle \{2\}} & {\scriptstyle \{3\}} &
           {\scriptstyle \{4\}} & {\scriptstyle \{5\}} & {\scriptstyle \{6\}} &
           {\scriptstyle \{3,4,6\}}\cr %
            {\scriptstyle \{1,2,4\}}& 0 & 0 & 1 & 0 & 1 & 1 & 1  \cr
            {\scriptstyle \{1,2,5\}}& 0 & 0 & 1 & 1 & 0 & 1 & 2  \cr
            {\scriptstyle \{1,2,6\}}& 0 & 0 & 1 & 1 & 1 & 0 & 1  \cr
            {\scriptstyle \{1,3,6\}}& 0 & 1 & 0 & 1 & 1 & 0 & 0  \cr
            {\scriptstyle \{1,4,6\}}& 0 & 1 & 1 & 0 & 1 & 0 & 0  \cr
            {\scriptstyle \{1,5,6\}}& 0 & 1 & 1 & 1 & 0 & 0 & 1  \cr
            {\scriptstyle \{2,4,6\}}& 1 & 0 & 1 & 0 & 1 & 0 & 0  \cr
        }
    \]
Then $\Hrk(A) = 7$ if and only if 
\newcommand\HL[1]{\fcolorbox{red}{white}{$#1$}}
\[
\begin{vmatrix}
        0 & 0 & \pm 1 & 0 & \pm 1 & \pm 1 & \pm 1  \\
        0 & 0 & \pm 1 & \pm 1 & 0 & \pm 1 & \pm \sqrt{2}  \\
        0 & 0 & \pm 1 & \pm 1 & \pm 1 & 0 & \pm 1  \\
        0 & \pm 1 & 0 & \pm 1 & \pm 1 & 0 & 0  \\
        0 & \pm 1 & \pm 1 & 0 & \pm 1 & 0 & 0  \\
        0 & \pm 1 & \pm 1 & \pm 1 & 0 & 0 & \pm 1  \\
        \fcolorbox{red}{white}{$\pm 1$} & 0 & \pm 1 & 0 & \pm 1 & 0 & 0  \\
        \end{vmatrix} \ = \
    \pm \begin{vmatrix}
        0 & \pm 1 & 0 & \pm 1 & \pm 1 & \pm 1  \\
        0 & \pm 1 & \pm 1 & 0 & \pm 1 & \HL{\pm\sqrt{2}}  \\
        0 & \pm 1 & \pm 1 & \pm 1 & 0 & \pm 1  \\
        \pm 1 & 0 & \pm 1 & \pm 1 & 0 & 0  \\
        \pm 1 & \pm 1 & 0 & \pm 1 & 0 & 0  \\
        \pm 1 & \pm 1 & \pm 1 & 0 & 0 & \pm 1 
    \end{vmatrix} \ \neq \  0 .
\]
The last determinant is of the form $a + \sqrt{2} \cdot b$ for some integers
$a,b$. To check that this determinant is nonzero, we can check that $b$ is
nonzero.  By Laplace expansion, this is the case if
\[
    \begin{vmatrix}
        0 & \pm 1 & 0 & \pm 1 &  \HL{\pm 1}  \\
        0 & \pm 1 & \pm 1 & \pm 1 & 0  \\
        \pm 1 & 0 & \pm 1 & \pm 1 & 0 \\
        \pm 1 & \pm 1 & 0 & \pm 1 & 0 \\
        \pm 1 & \pm 1 & \pm 1 & 0 & 0 
    \end{vmatrix} \ = \ 
    \pm \begin{vmatrix}
        0 & \pm 1 & \pm 1 & \pm 1  \\
        \pm 1 & 0 & \pm 1 & \pm 1 \\
        \pm 1 & \pm 1 & 0 & \pm 1 \\
        \pm 1 &  \pm 1 & \pm 1 & 0 
        \end{vmatrix} \ \neq  \ 0.
\]
The latter is exactly the claim that the matrix $A_0$ of 
Proposition~\ref{prop:tech} has $\Hrk(A_0)=4$.
\end{proof}

\section{Higher level graphs}\label{sec:higher}

In this section we study the class $\GLevels_k$ of $k$-level graphic matroids
for arbitrary $k$.  For sake of brevity, we will simply refer to them as
`graphs'. The Robertson-Seymour theorem assures that the list of forbidden
minors characterizing $\GLevels_k$ is finite and we give an explicit
description in the next subsection.  In Section~\ref{ssec:3level}, we focus on
the class of $3$-level graphs which is characterized by exactly one forbidden
minor, the wheel $W_4$ with $4$ spokes. The class of $W_4$-minor-free graphs
was studied by Halin and we recover its building blocks from levelness
considerations. In Section~\ref{ssec:4level} we focus on the class of graphs
with Theta rank $2$. Forbidden minors for this class can be obtained from the
structure of $4$-level graphs.

\subsection{Excluded minors for $k$-level graphs}

A consequence of Theorem~\ref{thm:main1} is that a graph $G$ is $2$-level if
and only if $G$ does not have $K_4$ as a minor. In order to give a 
characterization of $k$-level graphs in terms of forbidden minors, we first
need to view $K_4$ from a different angle. 
\begin{defi}\label{dfn:cone}
    The \Defn{cone} over a graph $G = (V,E)$ with apex $w \not\in V$ is the
    graph 
    \[
        \cone(G) = (V \cup \{w\},E \cup \{ wv : v \in V\} ).
    \]
\end{defi}

Let us denote by $C_n$ the \Defn{$n$-cycle}. Thus, we can view $K_4$ as the
cone over $C_3$. As in the previous section, we only need to consider graphic
matroids $\MM(G)$ which are connected. In terms of graph theory these correspond
exactly to biconnected graphs.
For a flacet $F$ let us denote by $V_F \subseteq V$ the vertices covered by
$F$.
\begin{prop}\label{prop:V_closed}
    Let $G = (V,E)$ be a biconnected graph and $F \subset E$ a flacet with $|E
    \backslash F| \ge 2$. Then $G|_F$ is a vertex-induced subgraph.
\end{prop}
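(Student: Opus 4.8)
The plan is to translate the matroid hypotheses on the flacet $F$ into graph-theoretic statements about the subgraph $G|_F = (V_F,F)$ and then exploit the flat (closure) property. Recall that for the graphic matroid $\MM(G)$ the rank of an edge set $F$ equals $|V_F| - c$, where $c$ is the number of connected components of $(V_F,F)$, and that an edge $e = uv \notin F$ satisfies $\rk(F \cup e) = \rk(F)$ precisely when $u$ and $v$ lie in the same component of $(V_F,F)$. Since $F$ is a flat, no such $e$ exists outside $F$; in other words, whenever two vertices of $V_F$ are joined by a path using only edges of $F$, every edge of $G$ between them already belongs to $F$.

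First I would establish that $(V_F,F)$ is connected. By the definition of a flacet, $\MM|_F = \MM(G)\setminus(E\setminus F)$ is a connected matroid, and $\MM|_F$ is exactly the cycle matroid of $(V_F,F)$. As every vertex of $V_F$ is incident to an edge of $F$, the subgraph $(V_F,F)$ has no isolated vertices; were it disconnected, it would split into two vertex-disjoint edge-bearing pieces, and $\MM|_F$ would decompose as a nontrivial direct sum, contradicting connectivity. Hence $(V_F,F)$ is connected (indeed $2$-connected, although mere connectivity is all I need).

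With connectivity in hand the induced-subgraph claim is immediate: given any two vertices $u,v \in V_F$ joined by an edge $e = uv$ of $G$, connectivity of $(V_F,F)$ provides a $u$--$v$ path inside $F$, so $e$ closes a cycle and thus $\rk(F\cup e) = \rk(F)$. The flat property then forces $e \in F$. Consequently $F$ contains every edge of $G$ with both endpoints in $V_F$, which is exactly the statement that $G|_F = G[V_F]$ is a vertex-induced subgraph.

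The hypotheses on $G$ play an auxiliary role. Biconnectedness of $G$ makes $\MM(G)$ connected, so that the flacet characterization of the preceding theorem is available, and it also ensures that the type-(ii) facets $E\setminus e$ are never flats—removing one edge from a $2$-connected graph leaves it connected—which is reflected in the assumption $|E\setminus F|\ge 2$ confining us to genuine flacets, for which the conclusion holds (without it, $G|_{E\setminus e}$ would cover all of $V$ yet miss the edge $e$ and so fail to be vertex-induced). The only genuinely delicate point is the translation in the second step, namely deducing graph connectivity of $(V_F,F)$ from matroid connectivity of $\MM|_F$; this reduces to the standard fact that the cycle matroid of a disconnected, isolated-vertex-free graph is a nontrivial direct sum, so I expect no serious obstacle.
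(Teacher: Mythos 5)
Your proof is correct, but it routes the contradiction through a different part of the flacet definition than the paper does. Both arguments begin identically: suppose $e \in E\setminus F$ joins two vertices of $V_F$, and use connectivity of the matroid $\MM|_F$ to get connectivity of the graph $(V_F,F)$ --- a step the paper's proof leaves implicit and you rightly make explicit --- so that $e$ closes a cycle with $F$ and $\rk(F\cup e)=\rk(F)$. At this point you invoke the flat property of $F$ and are done. The paper instead reads the same fact as ``$e$ is a loop of $G/F$'' and derives the contradiction from connectivity of $\MM/F$: a connected matroid on at least two elements has no loops, and this is precisely where the hypothesis $|E\setminus F|\ge 2$ enters. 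The two facts are of course equivalent ($e\in\mathrm{cl}(F)$ if and only if $e$ is a loop of $\MM/F$), so the difference is which flacet axiom delivers the punchline, and what each route buys is slightly different. Yours is independent of the cardinality hypothesis: since the paper defines a flacet to be a flat with connected restriction and contraction, your argument works verbatim even when $|E\setminus F|=1$; as your closing remark correctly observes, in a biconnected graph $E\setminus e$ has full rank and hence is never a flat, so the hypothesis really only guards against the type-(ii) facet sets $E\setminus e$, for which the conclusion genuinely fails. The paper's route buys a one-line proof at the cost of silently using both the graph connectivity of $(V_F,F)$ (needed for $e$ to actually become a loop under contraction) and the size hypothesis. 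Your aside that $(V_F,F)$ is in fact $2$-connected is also true but, as you note, not needed.
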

\begin{proof}
    By contradiction, suppose that $e\in E \backslash F$ is an edge with both endpoints
    in $V_F$. Since $F$ is a flacet, $G/F$ is a biconnected graph with loop
    $e$. This contradicts $|E \backslash F| \ge 2$.
\end{proof}

The definition of flacets requires the graph $G/F$ to be biconnected.
This, in turn, implies that $G|_{E\backslash F}$ is connected. Let us write $C(F): = \{
uv \in E: u \in V_F, v \not\in V_F\}$ for the induced \Defn{cut}.  Moreover,
let us write $\oF := E \setminus (F \cup C(F))$. The next result allows us to
find minors $G'$ of $G$ with $\Lev(G') = \Lev(G)$.

\begin{lemma}\label{lem:contract_oF}
    Let $G$ be a biconnected graph and $F$ a $k$-level flacet of $\MM(G)$.
    Then $F$ is a $k$-level flacet of $\MM(G/\oF)$.
\end{lemma}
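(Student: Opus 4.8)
The plan is to pass to the graphic matroid $\MM = \MM(G)$ and compare the values of the linear function $\ell_F(\x) = \sum_{e\in F} x_e$ on the spanning trees (i.e. bases) of $G$ and of $G/\oF$. Recall that $F$ being a $k$-level flacet means that $\ell_F$ takes exactly $k$ distinct values on $V_\MM$. The first thing I would record is the standard matroid fact that the set of values $\{\,|B\cap F| : B \text{ a basis of }\MM\,\}$ is the full integer interval $[\,\rk(\MM)-\rk(\MM\setminus F),\ \rk_\MM(F)\,]$: given two bases, the exchange axiom produces a sequence of single-element swaps between them, and each swap changes $|B\cap F|$ by at most one, so every intermediate value is attained. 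Consequently $k$ is determined by the minimum and the maximum of $\ell_F$ alone, and it suffices to prove that $F$ is still a flacet of $\MM/\oF$ and that these two extreme values are unchanged when $\oF$ is contracted.

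The structural key is that $F$ and $\oF$ span \emph{disjoint} vertex sets. Indeed, by Proposition~\ref{prop:V_closed} the subgraph $G|_F$ is vertex-induced, so each edge outside $F$ either lies in the cut $C(F)$ or has both endpoints in $V\setminus V_F$; the latter are exactly the edges of $\oF$. Hence $G|_{F\cup\oF}$ is a disjoint union and $\rk_\MM(F\cup\oF)=\rk_\MM(F)+\rk_\MM(\oF)$. Feeding this into the contraction rank formula $\rk_{\MM/\oF}(A)=\rk_\MM(A\cup\oF)-\rk_\MM(\oF)$, one reads off at once that $\rk_{\MM/\oF}(F)=\rk_\MM(F)$, so the maximum of $\ell_F$ is preserved; and a short computation shows that the minimum $\rk(\MM/\oF)-\rk\bigl((\MM/\oF)\setminus F\bigr)$ collapses to $\rk(\MM)-\rk_\MM(C(F)\cup\oF)=\rk(\MM)-\rk_\MM(E\setminus F)$, which is precisely the minimum of $\ell_F$ on $\MM$. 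Thus both endpoints of the interval, and therefore $k$, survive the contraction.

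It remains to verify that $F$ is a flacet of $\MM/\oF$, and this is where biconnectivity genuinely enters. Contracting $\oF$ only identifies vertices inside $V\setminus V_F$, so it leaves $G|_F$ untouched and $(\MM/\oF)|_F=\MM|_F$ is connected; and since adding any crossing edge $e\in C(F)$ to $F\cup\oF$ joins the two components $V_F$ and $V\setminus V_F$ and raises the rank, $F$ is a flat of $\MM/\oF$. The one delicate point — and the main obstacle — is the connectivity of $(\MM/\oF)/F=\MM/(F\cup\oF)$, since contraction does not preserve connectivity in general. Here I would use that $G/F$ is biconnected: the vertex $f$ obtained by contracting $V_F$ is not a cut vertex, so $G/F-f=(V\setminus V_F,\oF)$ is connected and spans $V\setminus V_F$. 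Contracting this spanning-but-for-$f$ connected subgraph collapses $G/(F\cup\oF)$ to the two vertices $f$ and the image of $V\setminus V_F$, joined by the nonempty bundle of cut edges $C(F)$, which is a connected matroid. Finally, the degenerate case $\oF=\emptyset$ (which is exactly the range $|E\setminus F|\le 1$ excluded in Proposition~\ref{prop:V_closed}) needs no argument, since there $G/\oF=G$.
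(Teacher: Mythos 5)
Your proof is correct, but on the key point --- preservation of the levelness --- it takes a genuinely different route from the paper's. The paper argues by hand: starting from a spanning tree $T_1$ whose restriction to the connected graph $G|_{E\setminus F}$ is spanning (so $|T_1\cap F|$ is minimal), it constructs an explicit sequence of spanning trees $T_1,\dots,T_k$ with $|T_i\cap F|=|T_1\cap F|+i-1$ and with constant intersection with $\oF$, at each step trading an edge of $F$ for a cut edge in $C(F)$; contracting these trees exhibits $k$ levels in $G/\oF$, and the upper bound is the trivial observation that a minor cannot have more levels. You instead observe that in any matroid the values $|B\cap F|$ over all bases form the full integer interval $[\,\rk(\MM)-\rk_\MM(E\setminus F),\ \rk_\MM(F)\,]$ (basis-exchange walk plus a discrete intermediate-value argument), so that the levelness of a flacet is the closed formula $\rk_\MM(F)-\rk(\MM)+\rk_\MM(E\setminus F)+1$; then the contraction rank formula, combined with Proposition~\ref{prop:V_closed} to get the additivity $\rk_\MM(F\cup\oF)=\rk_\MM(F)+\rk_\MM(\oF)$ from vertex-disjointness, shows both interval endpoints are unchanged in $\MM/\oF$. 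Your interval observation is essentially the ``$k$-sequence of bases'' that the paper only formalizes later, in Section~\ref{sec:klevel}; packaging it as a rank identity buys you a computation that avoids the explicit tree construction and whose only graph-specific ingredient is the vertex-disjointness of $F$ and $\oF$, so your argument is closer to a purely matroidal one, whereas the paper's tree-exchange is self-contained and previews the technique reused in Proposition~\ref{prop:vertexdegreelevelness}. Your verification that $F$ remains a flacet coincides with the paper's: $(\MM/\oF)|_F=\MM|_F$ and $(\MM/\oF)/F=\MM/(F\cup\oF)\cong U_{|C(F)|,1}$, using biconnectivity of $G/F$ to see that $\oF$ connectedly spans $V\setminus V_F$.

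One parenthetical is wrong, though harmlessly so: $\oF=\emptyset$ is not equivalent to $|E\setminus F|\le 1$. For the triangle flacet of $K_4$ one has $\oF=\emptyset$ while $|E\setminus F|=3$; conversely, $|E\setminus F|\le 1$ never occurs for a flacet of a connected matroid, since $E\setminus F=\{e\}$ with $F$ a flat would force $e$ to be a coloop. Since you handle the case $\oF=\emptyset$ trivially (and correctly), and every proper flacet satisfies the hypothesis $|E\setminus F|\ge 2$ of Proposition~\ref{prop:V_closed}, nothing in the proof breaks --- but the case split should be stated as ``$\oF=\emptyset$ versus $\oF\neq\emptyset$,'' not via the size of $E\setminus F$.
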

\begin{proof}
    Let $H = G/\overline{F}$. It follows from the definition of flacets, that
    $G|_{\oF}$ is connected and thus $H/F = G/(F \cup \oF)$ is biconnected,
    since $\MM(H/F)= U_{|C(F)|,1}$ is a connected matroid. Moreover $H|_F =
    G|_F$ is biconnected and therefore $F$ is a flacet of $H$. 

    For the levelness of $F$, observe that it cannot be bigger than $k$. Let
    $T_1 \subset E$ be a spanning tree such that the restriction to the
    connected graph $G|_{E \setminus F}$ is also a spanning tree. In
    particular, $|T_1 \cap F|$ is minimal among all spanning trees. It now
    suffices to show that there is a sequence of spanning trees
    $T_1,T_2,\dots,T_k \subset E$ with $|T_i \cap F| = |T_1 \cap F| + i-1$ for
    all $i=1,\dots,k$ and such that $T_i \cap \oF = T_j \cap \oF$ for all
    $i,j$.  The contractions $T_i / \oF$ then show that $F$ is at least
    $k$-level for $H$.

    If $T_i \cap F$ is not a spanning tree for $G|_F$, then pick $e \in F
    \setminus T_i$ such that $e$ connects two connected components of
    $(V_F,T_i \cap F)$.  Since $T_i$ is a spanning tree, there is a cycle in
    $T_i \cup e$ that uses at least one cut edge $f \in C(F) \cap T_i$. Hence
    $T_{i+1} = (T_i \setminus e) \cup f$ is the new spanning tree with the
    desired properties.
\end{proof}

The contraction of $\oF$ in $G$ gives a graph with vertices $V_F \cup
\{w\}$, where $w$ results from the contraction of $\oF$.

\begin{prop}\label{prop:vertexdegreelevelness}
    Let $G=(V,E)$ be a simple, biconnected graph and let $w$ be a vertex such
    that the set of edges $F$ of $G - w$ is a flacet. Then $F$ is a $k$-level
    flacet of $\MM(G)$ if and only if $\deg(w)= k$. 
\end{prop}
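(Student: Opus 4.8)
The plan is to reduce the levelness of $F$ to a purely graph-theoretic count of how many distinct degrees the vertex $w$ can attain in a spanning tree. Since $\MM = \MM(G)$, the base configuration $V_\MM$ consists of the indicator vectors of spanning trees, and for a spanning tree $T$ the value $\ell_F(\Char{T}) = |T \cap F|$ counts the edges of $T$ that avoid $w$. In the present situation $V_F = V \setminus \{w\}$, so the cut $C(F)$ is exactly the set of edges incident to $w$ and $\oF = \emptyset$. As every spanning tree has $|V|-1$ edges, of which precisely $\deg_T(w)$ meet $w$, we obtain
\[
    \ell_F(\Char{T}) \ = \ (|V|-1) - \deg_T(w),
\]
where $\deg_T(w)$ denotes the degree of $w$ in $T$. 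Hence the number of values taken by $\ell_F$ on $V_\MM$ equals the number of distinct values of $\deg_T(w)$ as $T$ ranges over all spanning trees, and it suffices to prove that this set of values is exactly $\{1,2,\dots,\deg(w)\}$, which shows that $F$ is $\deg(w)$-level.

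For the two extreme values I would argue directly. The star consisting of all $\deg(w)$ edges at $w$ is a forest, and any forest extends to a spanning tree of the connected graph $G$; this yields a tree $T$ with $\deg_T(w) = \deg(w)$. At the other end, biconnectivity of $G$ guarantees that $G - w$ is connected, so choosing a spanning tree of $G-w$ and attaching $w$ by a single edge produces a spanning tree in which $w$ is a leaf, realizing $\deg_T(w) = 1$.

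The main point is to realize every intermediate value, for which I would use an exchange argument. Given a spanning tree $T$ with $\deg_T(w) = j \ge 2$, remove one edge $wv \in T$; this splits $T$ into the component $T_w$ containing $w$ and a component $T_v$ not containing $w$. Because $j \ge 2$, the set $V(T_w) \setminus \{w\}$ is nonempty, so $V \setminus \{w\}$ is partitioned into the two nonempty parts $V(T_v)$ and $V(T_w)\setminus\{w\}$. Since $G - w$ is connected, some edge $f \in E$ with both endpoints different from $w$ joins these two parts; then $(T \setminus wv) \cup f$ is again a spanning tree, now with $w$-degree $j-1$. Iterating lowers the degree one step at a time from $\deg(w)$ down to $1$, so all of $\{1,\dots,\deg(w)\}$ occur. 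Consequently $\ell_F$ takes exactly $\deg(w)$ distinct values on $V_\MM$, that is, $F$ is $k$-level precisely when $\deg(w) = k$. The one delicate point is the existence of the reconnecting edge $f$ avoiding $w$; this is exactly where biconnectivity of $G$ (equivalently, connectivity of $G - w$) is indispensable.
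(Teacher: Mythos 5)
Your proof is correct and takes essentially the same approach as the paper: both reduce the levelness of $F$ to counting the distinct values of $\deg_T(w)$ over spanning trees, realize the extreme values using simplicity of $G$ (for $\deg_T(w)=\deg(w)$) and connectivity of $G-w$ (for $\deg_T(w)=1$), and obtain all intermediate values by an edge-exchange argument. The only difference is cosmetic: the paper invokes the exchange sequence from Lemma~\ref{lem:contract_oF} to increase $|T\cap F|$ step by step, whereas you spell out the equivalent exchange directly, decreasing $\deg_T(w)$ one unit at a time.
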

\begin{proof}
      Let $E_w$ be the edges incident to $w$.  For a spanning tree $T
      \subseteq E$, we have $\ell_F(\1_T) = |F \cap T| = |T| - |E_w \cap T|$.
      Hence, $F$ is $k$-level if and only if there are at most $k$ spanning
      trees $T_1,\dots,T_k$ such that every $T_i$ uses a different number of
      edges from $E_w$. Since $|E_w|=\deg(w)$ and every spanning tree contains
      at least one edge of $E_w$, there are at most $\deg(w)$ spanning trees
      with different size of the intersection with $F$, thus $k\leq \deg(w)$.
      Moreover, $G$ is simple, thus there exists a spanning tree $T_1$ such
      that $E_w\subseteq T_1$. Applying the same reasoning of the proof of
      Lemma \ref{lem:contract_oF}, we obtain the sequence of spanning trees
      with the desired properties. Finally, we observe that $T_1\cap F$ has
      $\deg(w)-1$ connected components, thus the sequence is made of at least
      $\deg(w)$ trees, proving that $\deg(w)\leq k$.
\end{proof}

It follows from Proposition \ref{prop:vertexdegreelevelness} that the cone
over a biconnected graph on $k$ vertices has a \mbox{$k$-level} flacet.  The next
result gives a strong converse to this observation.  A graph $G$ is called
\Defn{minimally biconnected} if $G\setminus e$ is not biconnected for all $e
\in E$. For more background on this class of graphs we refer to \cite{Plummer}
and \cite{dirac}. 
\begin{prop}\label{prop:pyramidminor}
   Let $G$ be a simple, biconnected graph with a vertex $w$ such that the set
   of edges $F$ not incident to $w$ is a flacet. If $F$ is a $k$-level flacet
   of $\MM(G)$, then $G$ has a minor $\cone(H)$ where $H$ is a minimally
   biconnected graph on $k$ vertices.
\end{prop}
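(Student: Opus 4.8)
The plan is to build the cone minor by working inside the graph $H = G/\oF$ guaranteed by Lemma~\ref{lem:contract_oF}, where $F$ remains a $k$-level flacet and the structure is as clean as possible. After contracting $\oF$, the vertex set is $V_F \cup \{w\}$, where $w$ is the single vertex obtained from the contraction; by Proposition~\ref{prop:V_closed} the edges $F$ induce precisely the subgraph on $V_F$, and every edge of $H$ outside $F$ goes from some vertex of $V_F$ to $w$ (these are the cut edges $C(F)$). Thus $H$ already looks like a cone: it is $G|_F$ together with an apex $w$ joined to some subset of $V_F$. The first task is therefore to promote this to a \emph{full} cone, i.e. to arrange that $w$ is adjacent to \emph{every} vertex of $V_F$, so that $H$ becomes $\cone(G|_F)$ possibly after further minor operations.

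Next I would read off what the levelness hypothesis buys us combinatorially. By the argument in Proposition~\ref{prop:vertexdegreelevelness}, the number of levels of $F$ equals $\deg_H(w)$ when $w$ is adjacent to all of $V_F$; more generally the $k$-levelness controls how many distinct values $|T \cap F| = |T| - |E_w \cap T|$ can occur over spanning trees $T$, which ties $k$ to the degree of the apex and to the connectivity of $G|_F$. The key point is that $G|_F$ is biconnected (it is the restriction of a flacet), and I want to extract from it a \emph{minimally} biconnected graph $H_0$ on exactly $k$ vertices. The natural mechanism is to delete and contract edges of $G|_F$ one at a time, each time preserving biconnectivity of the restriction and the cone structure, reducing the vertex count of $V_F$ down to $k$ while keeping the apex adjacent to all surviving vertices. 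Each such reduction step must be checked to not increase the levelness past $k$ and to land on a minimally biconnected target; minimality is obtained by repeatedly deleting any edge whose removal leaves the graph biconnected.

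The main obstacle, and the step I would spend the most care on, is coordinating the reduction of $V_F$ with the maintenance of the cone structure and the exact count of $k$ vertices. Contracting or deleting an edge inside $G|_F$ can change the adjacencies to $w$ and can alter the number of levels, so I would phrase the reduction as: given the vertex $w$ adjacent to all of $V_F$, the level number $k$ equals $\deg(w)$, hence $|V_F| \geq k$; I then argue that one can contract $G|_F$ down to a minimally biconnected graph $H$ on $k$ vertices while keeping $w$ joined to every vertex, so the resulting minor is exactly $\cone(H)$. Here I would invoke a standard fact about minimally biconnected graphs (as in \cite{Plummer,dirac}), namely that a biconnected graph on at least $k$ vertices contains, as a minor, a minimally biconnected graph on exactly $k$ vertices; combining this with the observation that contractions inside $V_F$ commute with coning over $w$ yields the claimed $\cone(H)$ minor. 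The delicate bookkeeping is ensuring that after all these operations the apex degree stays equal to $k$ so that the contracted graph has precisely $k$ cone-vertices, which is where the $k$-levelness of $F$ is used decisively.
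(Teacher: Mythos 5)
Your overall strategy matches the paper's: fix the apex $w$, use Proposition~\ref{prop:vertexdegreelevelness} to get $\deg(w)=k$ (hence $|V_F|\ge k$), and then shrink $G|_F$ by deletions and contractions to a minimally biconnected graph on exactly $k$ vertices while preserving the cone structure. (Note that your opening application of Lemma~\ref{lem:contract_oF} is vacuous here: under the hypothesis $F$ is already the set of all edges avoiding $w$, so $\oF=\emptyset$.) However, the decisive step is precisely the one you defer, and the shortcut you offer in its place fails. The ``standard fact'' you invoke --- that a biconnected graph on at least $k$ vertices has a minimally biconnected minor on exactly $k$ vertices --- is true, but extracting such a minor of $G|_F$ \emph{in isolation} does not commute with coning over $w$: the apex is adjacent to only $k$ of the $|V_F|\ge k$ vertices, and a contraction that merges two neighbors of $w$ creates parallel edges at $w$ whose simplification drops the apex degree below $k$; the end result is then a cone over fewer than $k$ vertices, or a $k$-vertex graph that is not a cone at all. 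So the ``delicate bookkeeping'' you flag is not a detail to be checked afterwards --- it is the entire content of the proof, and your proposal contains no mechanism for it. Relatedly, your sentence ``given the vertex $w$ adjacent to all of $V_F$, the level number $k$ equals $\deg(w)$, hence $|V_F|\ge k$'' is muddled: Proposition~\ref{prop:vertexdegreelevelness} needs no full-adjacency hypothesis, and under full adjacency one would have $|V_F|=k$ outright, leaving nothing to contract.

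The paper supplies the two ingredients you are missing. First, after deleting edges so that $G|_F$ is minimally biconnected (deletions inside $F$ never change $\deg(w)$), Tutte's theorem (see \cite[Thm.~4.3.1]{Oxley}) guarantees that contracting \emph{any} edge of a minimally biconnected graph again yields a biconnected graph --- this, not a generic minor-extraction fact, is what licenses the contractions. Second, one contracts only edges with \emph{at most one} endpoint adjacent to $w$; such an edge exists as long as some vertex of $V_F$ is a non-neighbor of $w$, since any edge at that vertex qualifies (and biconnectivity of $G|_F$ gives it degree at least $2$ in $F$). Each such contraction keeps $\deg(w)=k$, so by Proposition~\ref{prop:vertexdegreelevelness} the new edge set is again a $k$-level flacet; re-deleting to restore minimal biconnectivity and iterating terminates exactly when every remaining vertex of $V_F$ is a neighbor of $w$, i.e., when $|V_F|=\deg(w)=k$ and the resulting minor is $\cone(H)$ with $H$ minimally biconnected on $k$ vertices. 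You should make this edge-selection rule and the appeal to Tutte's theorem explicit; without them your argument does not go through.
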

\begin{proof}
    Let $m = |V_F|$. By Proposition \ref{prop:vertexdegreelevelness},
    $\deg(w)=k$ and thus $m\geq k$. By removing edges if necessary, we can
    assume that $F$ is minimally biconnected. By Lemma~\ref{lem:tutte}, the
    contraction of any edge of $F$ leaves a biconnected graph. Contract an
    edge such that at most one endpoint is connected to $w$. The new edge set
    $F^\prime$ is still a $k$-level flacet.  By iterating these
    deletion-contraction steps, we obtain a cone over $F^\prime$ with apex
    $w$.
\end{proof}

\begin{thm}\label{thm:excludminorsgraphs}
    A graph $G$ is $k$-level if and only if $G$ has no minor $\cone(H)$ where
    $H$ is a minimally biconnected graph on $k+1$ vertices.
\end{thm}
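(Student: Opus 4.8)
The plan is to prove the two implications separately, leaning on the preceding structural results together with the minor-closedness of $\GLevels_k$ (Theorem~\ref{thm:minor_closed}). For the easy direction I would argue contrapositively: suppose $G$ has a minor $\cone(H)$ with $H$ minimally biconnected on $k+1$ vertices. Since $H$ is in particular biconnected, the remark following Proposition~\ref{prop:vertexdegreelevelness} shows that the edge set of $H$ is a flacet of $\cone(H)$ whose apex has degree $k+1$, hence a $(k+1)$-level flacet. Thus $\cone(H)$ is not $k$-level, and as $\cone(H)$ is graphic and $\GLevels_k$ is closed under minors, neither is $G$.

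For the converse, suppose $G$ is not $k$-level. First I would reduce to a simple, biconnected graph: the cycle matroid of $G$ decomposes as a direct sum over the $2$-connected blocks of $G$, so by Proposition~\ref{prop:TH-product} some block $B$ is not $k$-level, and $B$ is a deletion minor of $G$; deleting redundant parallel edges makes $B$ simple. By Proposition~\ref{prop:only_flacets} the facets of type $E\setminus e$ are $2$-level, so the failure of $k$-levelness is witnessed by a genuine flacet $F$ that is $m$-level for some $m\ge k+1$. Contracting $\oF$ as in Lemma~\ref{lem:contract_oF} produces a graph $G'$ on vertices $V_F\cup\{w\}$ in which $F$ is still an $m$-level flacet and is precisely the set of edges not incident to the apex $w$; after deleting parallel cut edges I may take $G'$ simple, so that $\deg(w)=m$ by Proposition~\ref{prop:vertexdegreelevelness}.

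The crucial step is to lower the level from $m$ to exactly $k+1$. I would delete $m-(k+1)$ edges incident to $w$. Such deletions leave $G'-w=G'|_F$ untouched and keep $w$ adjacent to at least $k+1\ge 2$ vertices of the biconnected graph $G'|_F$; one then checks that the resulting graph $G''$ remains simple and biconnected and that $F$ is still a flacet, since its restriction is unchanged and its contraction becomes a parallel class of size $\ge 2$. By Proposition~\ref{prop:vertexdegreelevelness}, $F$ is now a $(k+1)$-level flacet of $G''$, because $\deg_{G''}(w)=k+1$. Applying Proposition~\ref{prop:pyramidminor} to the pair $(G'',w)$ then yields a minor $\cone(H)$ of $G''$, and hence of $G$, with $H$ minimally biconnected on $k+1$ vertices, as required.

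I expect the main obstacle to lie in this reduction: confirming that each apex-edge deletion drops the flacet's level by exactly one rather than destroying the flacet, while simultaneously preserving simplicity and biconnectivity so that Propositions~\ref{prop:vertexdegreelevelness} and~\ref{prop:pyramidminor} remain applicable. The identity equating the level of $F$ with $\deg_{G''}(w)$ from Proposition~\ref{prop:vertexdegreelevelness} is exactly what converts this otherwise delicate control of levelness into a transparent degree count.
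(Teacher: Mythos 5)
Your proof is correct, and it shares the paper's skeleton---reduce to a simple biconnected block, contract $\oF$ via Lemma~\ref{lem:contract_oF} so the offending $m$-level flacet $F$ becomes the edge set of $G'-w$, and exploit the identity ``level of $F$ equals $\deg(w)$'' from Proposition~\ref{prop:vertexdegreelevelness}---but you handle the descent from $m$ to $k+1$ differently. The paper applies Proposition~\ref{prop:pyramidminor} right away at level $m$, obtaining a minor $\cone(H')$ with $H'$ minimally biconnected on $m$ vertices, and then descends \emph{inside the base}: by iterating the Tutte contraction step from the proof of Proposition~\ref{prop:pyramidminor}, $H'$ has a minimally biconnected minor $H$ on $k+1$ vertices, and $\cone(H)$ is a minor of $\cone(H')$ (contracting a base edge merges two spokes into a parallel pair, one of which is deleted). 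You instead descend \emph{at the apex}: delete $m-(k+1)$ spokes, verify that the result stays simple and biconnected (the base $G'|_F$ is $2$-connected and $w$ keeps at least two neighbors) and that $F$ stays a flacet (restriction unchanged, contraction $\cong U_{k+1,1}$, and $F$ remains a flat since every non-$F$ edge meets $w$), so that Proposition~\ref{prop:vertexdegreelevelness} makes $F$ exactly $(k+1)$-level, and only then invoke Proposition~\ref{prop:pyramidminor} once. These checks all go through, so your route is valid; it buys you independence from the fact that minimally biconnected graphs admit minimally biconnected minors on each smaller vertex count, and from the bookkeeping that coning commutes with minors of the base, at the price of the flacet-preservation verification, which you supply. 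You are in fact more careful than the paper on a point it glosses over: after contracting $\oF$ the cut edges may form parallel spokes, while simplicity is a hypothesis of Propositions~\ref{prop:vertexdegreelevelness} and~\ref{prop:pyramidminor}; your deletion of duplicate spokes is legitimate because a spanning tree uses at most one edge per parallel class, so the value set of $\ell_F$ (hence the level of $F$) is unchanged---the same observation, or equivalently Corollary~\ref{cor:Series_level} applied to parallel connection with $U_{2,1}$, also justifies your initial simplification of the block $B$, which otherwise would only give an upper bound on its levelness. Your ``easy'' direction matches the paper's implicit argument via the remark after Proposition~\ref{prop:vertexdegreelevelness} and minor-closedness (Theorem~\ref{thm:minor_closed}).
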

\begin{proof}
      Let $G=(V,E)$ be a graph and $F \subset E$ a $m$-level flacet such that $m > k$. By Lemma~\ref{lem:contract_oF}, we may assume that
    $F$ is the set of edges not incident to some $w \in V$. By
    Proposition~\ref{prop:pyramidminor}, we may also assume that $G|_F$ is
    minimally biconnected on $m$ vertices. Now, $G|_F$ contains a minor $H$
    that is minimally biconnected on $k+1$ vertices and hence $G$ contains
    $\cone(H)$ as a minor. 
\end{proof}

\subsection{The class of $3$-level graphs}\label{ssec:3level}

According to Theorem~\ref{thm:excludminorsgraphs}, the excluded minors for
$\GLevels_3$ are cones over minimally biconnected graphs on $4$ vertices. The
only minimally biconnected graph on $4$ vertices is the $4$-cycle
and hence the excluded minor is $W_4 =
\cone(C_4)$, the wheel with $4$ spokes. In general, let us write $W_n =
\cone(C_n)$ for the \Defn{$n$-wheel}, which is a $n$-level graph.
The family of $W_4$-minor-free graphs was considered by R.~Halin
(see~\cite[Ch.~6]{Diestel}). In this section, we will rediscover the
\emph{building blocks} for this class.

We start with the observation that by Lemma~\ref{lem:not3con} and
Corollary~\ref{cor:Series_level}, we may restrict to \mbox{$3$-connected}
graphic matroids. The following proposition allows us to focus on simple
$3$-connected graphs.
\begin{prop}[{\cite[Prop.~1.2]{Oxley5wheel}}]\label{prop:3connequiv}
    If $G$ is a graph with at least $4$ vertices, then $G$ is
    \mbox{$3$-connected} and simple if and only if $\MM(G)$ is $3$-connected.
\end{prop}

In this section we will use higher connectivity of graphs.  Recall that a
graph $G$ is \Defn{$k$-connected} if the removal of any $k-1$ vertices leaves
$G$ connected. In general, graphic matroids of $k$-connected graphs are
not necessarily $k$-connected matroids. However, we are going to consider only
simple graphs with more than 4 vertices, thus by
Proposition~\ref{prop:3connequiv} we do not need to specify if we use
$3$-connectivity in the sense of graphs or in the sense of matroids.
Also, a graph is \Defn{$k$-regular} if every vertex is
incident to exactly $k$ edges.  

\begin{prop}\label{prop:3level_3conn}
    A $3$-level, $3$-connected simple graph is $3$-regular.
\end{prop}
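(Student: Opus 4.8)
The plan is to combine the degree characterization of levelness in Proposition~\ref{prop:vertexdegreelevelness} with the fact that $3$-connectivity pins vertex degrees from both sides. Fix an arbitrary vertex $w$ and write $E_w$ for the edges incident to $w$ and $F = E \setminus E_w$ for the edges of $G - w$. The governing idea is: if $F$ is a flacet, then Proposition~\ref{prop:vertexdegreelevelness} says $F$ is $\deg(w)$-level, and since $G$ is $3$-level every flacet is at most $3$-level, forcing $\deg(w) \le 3$. A matching lower bound then yields $\deg(w) = 3$.

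The first step is to verify that $F$ is always a flacet of $\MM(G)$, that is, a flat with $\MM(G)|_F$ and $\MM(G)/F$ both connected. In the spanning subgraph $(V,F)$ the vertex $w$ is isolated, so every edge of $E \setminus F = E_w$ joins the component $\{w\}$ to a different component; adding any such edge therefore increases the rank, and $F$ is a flat. For the restriction, $\MM(G)|_F$ is the graphic matroid of $G - w$, and since $G$ is $3$-connected the graph $G - w$ is $2$-connected, hence biconnected, so $\MM(G)|_F$ is connected. For the contraction, collapsing all of $F$ identifies the connected graph $G - w$ to a single vertex $u$, turning each edge of $E_w$ into a parallel edge between $w$ and $u$; the resulting matroid is $U_{\deg(w),1}$, which is connected because $\deg(w) \ge 2$. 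As $|E \setminus F| = \deg(w) \ge 2$, the set $F$ is a genuine flacet of type (i), not of the form $E \setminus e$.

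The second step supplies the lower bound: in any $3$-connected graph the minimum degree is at least $3$, since the neighborhood of a vertex of smaller degree would be a vertex cut of size $<3$ (here $|V| \ge 4$, so this cut genuinely separates $w$ from the remaining vertices). Combining $\deg(w) \le 3$ with $\deg(w) \ge 3$ gives $\deg(w) = 3$, and since $w$ was arbitrary, $G$ is $3$-regular.

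I expect the only real obstacle to be the flacet verification, and within it the connectivity of the restriction $\MM(G)|_F = \MM(G - w)$: this is the one place where $3$-connectivity is genuinely used, through the standard fact that deleting a vertex from a $k$-connected graph leaves a $(k-1)$-connected graph. The flat condition and the contraction condition are essentially automatic once one observes that $w$ is isolated in $(V,F)$ and that $G - w$ is connected.
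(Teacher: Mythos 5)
Your proof is correct and follows essentially the same route as the paper: both use $3$-connectivity to force $\deg(w)\ge 3$, and both apply Proposition~\ref{prop:vertexdegreelevelness} to the flacet $F$ of edges of $G-w$ (using that $G-w$ is biconnected) to cap the degree at $3$. The only difference is presentational—the paper argues by contradiction from a vertex of degree $\ge 4$ and leaves the flacet verification implicit, while you check the flat, restriction, and contraction conditions explicitly, all correctly.
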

\begin{proof}
    A graph $G$ with a vertex of degree at most $2$ cannot be $3$-connected.
    If there is a vertex $w$ of degree at least $4$, then $G-w$ is
    biconnected. It follows that the set of edges $F$ not incident to $w$
    form a flacet and Proposition~\ref{prop:vertexdegreelevelness} yields the
    claim.
\end{proof}

The following well-known result (see~\cite[Thm~8.8.4]{Oxley}) puts
strong restrictions on minimally $3$-connected matroids. A \Defn{$n$-whirl}
is the matroid of the $n$-wheel $W_n = \cone(C_n)$ with the additional basis
being the rim of the wheel $B = E(C_n)$.

\begin{thm}[Tutte's wheels and whirl theorem]\label{thm:Tutte}
Let $\MM = (E,\BB)$ be a $3$-connected matroid. Then the following are
equivalent:
\begin{enumerate}[\rm (i)]
\item For all $e \in E$ neither $\MM \backslash e$ nor $\MM / e$ is
    $3$-connected;
\item $\MM$ is a $n$-whirl or $n$-wheel, for some $n$.
\end{enumerate}
\end{thm}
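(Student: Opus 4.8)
The plan is to prove the two implications separately, treating $(ii)\Rightarrow(i)$ as a direct verification and $(i)\Rightarrow(ii)$ as the substantive direction. For the easy direction, suppose $\MM$ is the cycle matroid of the $n$-wheel $W_n=\cone(C_n)$ or the associated $n$-whirl. In either case the ground set splits into $n$ spokes and $n$ rim elements, and one reads off directly that each element lies in a $3$-element circuit (a ``triangle'', formed by two consecutive spokes together with a rim element) and in a $3$-element cocircuit (a ``triad'', the set of edges meeting a rim vertex). I would then check, element by element, that deleting any element creates a series pair while contracting any element creates a parallel pair, each of which furnishes a $2$-separation of the resulting matroid; hence neither $\MM\setminus e$ nor $\MM/e$ is $3$-connected. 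This is a finite computation once the triangles and triads are listed, and the whirl case differs from the wheel only in that its rim fails to be a cocircuit.

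The hard direction is $(i)\Rightarrow(ii)$, where I would follow the fan-based argument. Recall that a $2$-separation of a matroid is a partition $(X,Y)$ of the ground set with $|X|,|Y|\ge 2$ and $\rk(X)+\rk(Y)-\rk(\MM)\le 1$, and that a connected matroid on at least four elements is $3$-connected exactly when it admits no $2$-separation. Call $e$ \emph{essential} if neither $\MM\setminus e$ nor $\MM/e$ is $3$-connected; the hypothesis is that every element is essential. The first key step is Bixby's Lemma, which asserts that for any $e$ at least one of $\mathrm{co}(\MM\setminus e)$ and $\mathrm{si}(\MM/e)$ is $3$-connected. Since $\MM\setminus e$ and $\MM/e$ themselves are not $3$-connected, the cosimplification or simplification must be strict, and from this I would extract that $e$ lies in a triangle or in a triad. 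Thus every element of $\MM$ belongs to a triangle or a triad.

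Next I would build a fan. Starting from a triangle (or triad) and applying Tutte's Triangle Lemma together with its dual, the failure of $3$-connectedness for the relevant single-element deletions and contractions forces an overlapping triad (respectively triangle), and iterating produces a maximal sequence $e_1,e_2,\dots,e_k$ in which consecutive triples $\{e_i,e_{i+1},e_{i+2}\}$ alternate between triangles and triads. The crux is then to show that such a maximal fan, in a matroid all of whose elements are essential, must exhaust the ground set and ``close up'' cyclically, so that the elements play precisely the roles of the spokes and rim edges of a wheel. The resulting cyclic rim set is always a circuit; whether it is \emph{also} a cocircuit separates the wheel (rim a circuit and a cocircuit) from the whirl (rim a circuit but not a cocircuit), which is exactly the single extra basis by which the whirl differs from the wheel.

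The main obstacle is precisely this closing-up step. Ruling out a fan that terminates ``in the interior'' of $\MM$ requires combining the $3$-connectivity of $\MM$ with the essentiality hypothesis to show that the two ends of a maximal fan cannot generate elements lying outside it, and must instead be joined through the cyclic structure. Keeping the local connectivity bookkeeping at the fan ends under control — and maintaining the triangle/triad parity consistently all the way around — is the delicate heart of the argument; everything else is setup, with Bixby's Lemma and Tutte's Triangle Lemma being the two external inputs I would quote.
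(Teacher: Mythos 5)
The paper does not prove this statement at all: it quotes Tutte's wheels-and-whirls theorem verbatim from Oxley's book (\cite[Thm.~8.8.4]{Oxley}), so there is no internal proof to compare against. Your outline follows the standard modern route to this result (the one in Oxley's Chapter~8): Bixby's Lemma to show every essential element lies in a triangle or a triad, Tutte's Triangle Lemma and its dual to grow fans, and a maximal-fan argument to recover the wheel/whirl structure. That is the right skeleton, and the easy direction $(ii)\Rightarrow(i)$ via series/parallel pairs is fine (modulo the small-rank caveat that, e.g., the $2$-whirl $U_{2,4}$ has $3$-connected single-element deletions $U_{2,3}$, so the verification implicitly forces $n\ge 3$). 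But as a proof the proposal has a genuine gap, which you yourself flag: the claim that a maximal fan in a matroid all of whose elements are essential must exhaust $E(\MM)$ and close up cyclically is \emph{the} mathematical content of the theorem, and it is only announced, not argued. Everything you actually carry out is either a finite check or a quotation of external lemmas; the delicate end-of-fan analysis that Oxley spends several lemmas on is missing entirely.

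There is also a concrete error in your final identification step. You claim the rim of the closed-up fan ``is always a circuit'' and that the wheel/whirl dichotomy is ``rim a circuit and a cocircuit'' versus ``rim a circuit but not a cocircuit.'' Both halves are wrong. In the wheel $\MM(W_n)$ the rim is a circuit-\emph{hyperplane}: its complement, the set of spokes, is the cocircuit (the spokes form the minimal edge cut isolating the hub), while the rim itself is not a cocircuit, since deleting all rim edges leaves the connected spanning star. The whirl $\mathcal{W}^n$ is obtained precisely by relaxing this circuit-hyperplane, so in the whirl the rim is \emph{not a circuit at all} --- it is a basis (this is exactly the ``additional basis'' in the paper's definition of the $n$-whirl). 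The correct dichotomy is therefore: rim dependent (a circuit) gives the wheel, rim independent (a basis) gives the whirl. As stated, your separating criterion would misclassify both cases, so even granting the closing-up step, the conclusion of $(i)\Rightarrow(ii)$ would not follow from your argument as written.
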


We will come back to whirls in the next section. For now, we note that the
only minimally $3$-connected simple graphs are the wheels.  Moreover, note
that every $3$-regular simple graph must have an even number of vertices
($3|V(G)|=2|E(G)|$). 

\begin{lemma}\label{lem:3con_6vert}
Let $G$ be a $3$-connected $3$-regular simple graph with at least $6$
vertices. Then $G$ is at least $4$-level.
\end{lemma}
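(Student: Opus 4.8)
The plan is to reduce the statement to a single edge contraction and then invoke the vertex-degree criterion of Proposition~\ref{prop:vertexdegreelevelness}. Since levelness is minor-monotone (Theorem~\ref{thm:minor_closed}), it suffices to produce a minor $G/e$ of $G$ that is at least $4$-level. The key observation is that contracting an edge $e = uv$ of a cubic graph produces a vertex $z$ of degree $4$, provided $u$ and $v$ have no common neighbor so that no parallel edges arise. If moreover $G/e$ is simple and biconnected and the edges avoiding $z$ form a flacet, then Proposition~\ref{prop:vertexdegreelevelness} shows at once that this flacet is $\deg(z) = 4$-level, so $\Lev(G/e) \ge 4$ and hence $\Lev(G) \ge 4$.

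The heart of the argument is to guarantee the existence of such a contractible edge, and here I would appeal to Tutte's wheels-and-whirls theorem (Theorem~\ref{thm:Tutte}). Since $G$ is $3$-regular with at least six vertices, $\MM(G)$ is neither a wheel --- the only $3$-regular wheel is $W_3$, whose graph is $K_4$ on just four vertices --- nor a whirl, because whirls are non-graphic. Theorem~\ref{thm:Tutte} therefore provides an element $e$ with $\MM(G) \setminus e$ or $\MM(G)/e$ being $3$-connected. Deletion cannot be the relevant operation: in a cubic graph, $G \setminus e$ has two vertices of degree $2$, giving a $2$-element cocircuit and hence destroying $3$-connectivity (note $|E|\ge 4$). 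Thus $\MM(G)/e = \MM(G/e)$ is $3$-connected.

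It then remains to verify the hypotheses of Proposition~\ref{prop:vertexdegreelevelness} for $G/e$. As $G/e$ has more than four elements and a $3$-connected matroid on at least four elements is simple, $G/e$ is a simple, $3$-connected, and in particular biconnected graph; so $u$ and $v$ have no common neighbor and the contracted vertex $z$ has degree exactly $4$. Let $F$ be the set of edges of $G/e$ not incident to $z$, that is, the edges of $(G/e) - z = G - \{u,v\}$. Since $G/e$ is $3$-connected, $(G/e) - z$ is biconnected, so $\MM(G/e)|_F$ is connected; and contracting $F$ collapses $(G/e)-z$ to a single point, leaving the uniform matroid $U_{4,1}$, which is connected. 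Hence $F$ is a flacet. Proposition~\ref{prop:vertexdegreelevelness} now yields that $F$ is a $4$-level flacet of $G/e$, so $\Lev(G/e) \ge 4$, and Theorem~\ref{thm:minor_closed} finishes the proof.

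I expect the main obstacle to be the bookkeeping around translating between matroid $3$-connectivity, which is the language of Theorem~\ref{thm:Tutte}, and graph $3$-connectivity together with simplicity: one must ensure that contracting $e$ neither creates parallel edges nor lowers the connectivity below what the flacet argument needs. The cleanest way around this is to let Tutte's theorem hand us a $3$-connected $\MM(G/e)$ and then read off simplicity, biconnectivity of $(G/e)-z$, and $\deg(z) = 4$ all at once from the fact that $3$-connected matroids on at least four elements are simple.
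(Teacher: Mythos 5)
Your proof is correct and follows essentially the same route as the paper's: invoke Tutte's wheels-and-whirls theorem (Theorem~\ref{thm:Tutte}) to obtain an edge $e$ with $\MM(G)/e$ $3$-connected (deletion being ruled out by the degree-$2$ vertices it creates), conclude that $G/e$ is a simple $3$-connected graph with a vertex of degree $4$, and apply Proposition~\ref{prop:vertexdegreelevelness} together with minor-monotonicity of levelness. Your explicit treatment of the whirl case (whirls are non-binary, hence non-graphic) and your verification that the edges of $(G/e)-z$ form a flacet are details the paper leaves implicit, but the argument is the same.
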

\begin{proof}
    If $G$ is a wheel, it is easy to see that it is at least $4$-level.
    Suppose now that $G$ is not a wheel. By Theorem \ref{thm:Tutte}, there
    must be an edge $e$ such that $G \setminus e$ or  $G / e$ is
    $3$-connected.  Now, $G\setminus e$ has a degree-2 vertex for all $e \in
    E$ and hence is not $3$-connected.  On the other hand, $\MM(G / e)$ is
    $3$-connected, which implies that $G/e$ is a simple $3$-connected graph.
    In addition, $G/e$ has a vertex of degree $4$.  By
    Proposition~\ref{prop:vertexdegreelevelness}, we conclude that $G / e$
    (and consequently $G$) is at least $4$-level.
\end{proof}

\begin{cor}\label{cor:k4only3level}
    $K_4$ is the only $3$-level, $3$-connected simple graph.
\end{cor}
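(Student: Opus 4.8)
The plan is to combine the classification results already established for 3-level, 3-connected simple graphs. By Proposition~\ref{prop:3level_3conn}, any such graph must be 3-regular, so I would begin by restricting attention to 3-regular, 3-connected simple graphs. The key dichotomy is on the number of vertices: since every 3-regular simple graph has an even number of vertices, the possible vertex counts are $4, 6, 8, \dots$. Lemma~\ref{lem:3con_6vert} disposes of all cases with at least $6$ vertices by showing such graphs are at least $4$-level, hence not $3$-level. This leaves only the case of $4$ vertices.

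For the four-vertex case, I would observe that the only $3$-regular simple graph on $4$ vertices is $K_4$ itself (each of the four vertices must be adjacent to the other three, forcing the complete graph). It remains to confirm that $K_4$ genuinely is $3$-level and $3$-connected: $3$-connectivity is immediate since removing any two vertices leaves a single vertex, and Proposition~\ref{prop:K4} already established that $\MM(K_4)$ is $3$-level. Thus $K_4$ satisfies all the hypotheses and is the unique graph that does so.

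I do not anticipate a serious obstacle here, as this corollary is essentially a bookkeeping consequence of the preceding two results. The one point that requires a small argument rather than a citation is ruling out every graph on $\ge 6$ vertices and confirming that $4$ vertices forces $K_4$; the former is exactly the content of Lemma~\ref{lem:3con_6vert}, and the latter is an elementary observation about $3$-regular graphs. So the proof reduces to assembling these pieces: Proposition~\ref{prop:3level_3conn} forces $3$-regularity, parity forces an even vertex count, Lemma~\ref{lem:3con_6vert} excludes $\ge 6$ vertices, and the remaining case $|V(G)| = 4$ yields precisely $K_4$, which is indeed $3$-level by Proposition~\ref{prop:K4}.
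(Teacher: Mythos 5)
Your proposal is correct and matches the paper's intended argument exactly: the corollary is stated without a separate proof precisely because it follows from Proposition~\ref{prop:3level_3conn} (forcing $3$-regularity), the parity remark $3|V(G)|=2|E(G)|$ made just before Lemma~\ref{lem:3con_6vert}, and that lemma ruling out $\ge 6$ vertices, leaving only the $4$-vertex case $K_4$, which is $3$-level by Proposition~\ref{prop:K4}. Nothing to add.
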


The following gives a complete characterization of level $3$ graphs.
\begin{thm}\label{thm:level3}
  For a graph $G$ the following are equivalent.
    \begin{enumerate}[\rm (i)]
        \item $G$ has no minor isomorphic to $W_4$;
        \item $G$ is $3$-level;
        \item $G$ can be constructed from the cycles $C_2$, $C_3$, the graph
            $C_3^*$ with $2$ vertices and $3$ parallel edges,  and $K_4$ by
            taking direct or $2$-sums.
    \end{enumerate}
\end{thm}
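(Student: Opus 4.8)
The plan is to prove the cyclic chain of implications (i) $\Rightarrow$ (ii) $\Rightarrow$ (iii) $\Rightarrow$ (i), using the structural reductions already established for graphs and matroids. First I would observe that all three properties behave well under the $2$-sum and direct-sum decomposition. For levelness this is exactly Corollary~\ref{cor:Series_level} together with the fact that $\MLevels_k$ is closed under direct sums; for the forbidden-minor condition (i) it follows from Theorem~\ref{thm:minor_closed} that $W_4$-minor-freeness is preserved under taking minors, hence under the decompositions. For the synthetic condition (iii) closure is immediate since it is defined by the very operations in question. This means that by Lemma~\ref{lem:not3con} it suffices to verify the equivalence on the \emph{$3$-connected} simple building blocks, together with the base cases $C_2$ and $C_3^*$ (which arise only through $2$-sums involving series/parallel structure) and the degenerate cases covered by Proposition~\ref{prop:smallmatroids}.

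The implication (i) $\Rightarrow$ (ii) is the substantive direction and is where the wheels-and-whirls machinery pays off. Assuming $G$ has no $W_4$-minor, I would reduce to the $3$-connected case and invoke Corollary~\ref{cor:k4only3level}: the only $3$-level, $3$-connected simple graph is $K_4$, and more to the point Lemma~\ref{lem:3con_6vert} shows any $3$-connected $3$-regular simple graph on $\ge 6$ vertices is at least $4$-level, hence \emph{contains} a $W_4$-minor by Theorem~\ref{thm:excludminorsgraphs}. So a $W_4$-minor-free $3$-connected graph must be $K_4$ itself (or smaller), and these are $3$-level by Proposition~\ref{prop:K4}. Reassembling via $2$-sums and direct sums and applying Corollary~\ref{cor:Series_level} keeps the levelness at $3$, giving (ii).

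For (ii) $\Rightarrow$ (iii), I would again pass to the $3$-connected pieces in the decomposition guaranteed by Lemma~\ref{lem:not3con}. A $3$-level graph cannot contain $W_4$ as a minor by Theorem~\ref{thm:excludminorsgraphs}, so each $3$-connected factor is $W_4$-minor-free and $3$-level; by the argument above the only such simple factor is $K_4$. The genuinely $2$-connected-but-not-$3$-connected behaviour is absorbed by the $2$-sum operation, whose atoms on the graphic side are the cycles $C_2$ and $C_3$ and, allowing the dual, $C_3^*$; these appear precisely because series and parallel connections of small uniform matroids realize the non-$3$-connected glue. Thus every $3$-level graph is built from $C_2$, $C_3$, $C_3^*$, and $K_4$ by direct and $2$-sums. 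Finally (iii) $\Rightarrow$ (i) is the closure argument: each listed block is $W_4$-minor-free (checked directly, since they have at most $4$ vertices and $K_4$ has no $W_4$-minor), and $W_4$-minor-freeness is preserved under direct and $2$-sums, so any graph assembled from them avoids $W_4$.

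The main obstacle I anticipate is the clean handling of the low-connectivity base cases in (ii) $\Rightarrow$ (iii): one must argue carefully that the only irreducible pieces entering the $2$-sum decomposition are exactly $C_2$, $C_3$, $C_3^*$, and $K_4$, and in particular that the dual $C_3^*$ is genuinely needed. This requires matching the matroid-level $2$-sum atoms (small uniform matroids and $\MM(K_4)$) against their graphic realizations and confirming that $C_3^*$ is the graph whose matroid is the relevant uniform matroid appearing as a series/parallel atom. The $3$-connected core is essentially settled by Tutte's theorem and Lemma~\ref{lem:3con_6vert}; it is bookkeeping of the gluing atoms that demands attention.
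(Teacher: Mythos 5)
Your proposal is substantively correct and leans on the same pillars as the paper (Theorem~\ref{thm:excludminorsgraphs}, Corollary~\ref{cor:k4only3level} via Lemma~\ref{lem:3con_6vert}, Lemma~\ref{lem:not3con}, Corollary~\ref{cor:Series_level}), but it differs from the paper's proof in two places worth noting. First, your treatment of (i)~$\Rightarrow$~(ii) is more roundabout than necessary: the paper obtains (i)~$\Leftrightarrow$~(ii) in a single stroke from Theorem~\ref{thm:excludminorsgraphs} with $k=3$, because $C_4$ is the unique minimally biconnected graph on $4$ vertices, so $W_4=\cone(C_4)$ is the unique excluded minor for $\GLevels_3$. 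Your detour through the $2$-sum decomposition, $3$-regularity, and Lemma~\ref{lem:3con_6vert} is not wrong (and is not circular, since Theorem~\ref{thm:excludminorsgraphs} is proved independently earlier), but in the middle of it you invoke the implication ``not $3$-level $\Rightarrow$ has a $W_4$-minor'' from that very theorem, at which point the whole equivalence is already in hand.

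Second, and more importantly, you close the cycle with (iii)~$\Rightarrow$~(i), asserting that $W_4$-minor-freeness is preserved under direct sums and $2$-sums. That assertion is true, but only because $\MM(W_4)$ is $3$-connected: the underlying fact is that any $3$-connected minor of $\MM_1\oplus_2\MM_2$ (on at least $4$ elements) is already a minor of $\MM_1$ or of $\MM_2$. This is standard matroid theory, but it appears nowhere in the paper, so as written this step rests on an unproven external fact --- minor-closed classes are \emph{not} in general closed under $2$-sums, so the $3$-connectivity of the excluded minor is doing real work and must be invoked explicitly. The paper sidesteps this entirely by proving (iii)~$\Rightarrow$~(ii) instead, using only tools already established: Corollary~\ref{cor:Series_level} (the $2$-sum is a minor of the series connection, so Theorem~\ref{thm:minor_closed} bounds its levelness) together with~\eqref{eqn:direct_sum} shows the listed building blocks assemble into $3$-level graphs, and (ii)~$\Rightarrow$~(i) then comes for free from Theorem~\ref{thm:excludminorsgraphs}. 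Either supply the $3$-connected-minor-of-a-$2$-sum lemma with a citation or proof, or reroute your final implication through (ii) as the paper does; with that patch, your argument is complete. Your flagged concern about the gluing atoms $C_2$, $C_3$, $C_3^*$ is legitimate but matches the paper's own (terse) bookkeeping: the $3$-connected graphic atoms on at least $4$ elements are $3$-connected simple graphs, hence $K_4$ by Corollary~\ref{cor:k4only3level}, while the small pieces $U_{2,1}=\MM(C_2)$, $U_{3,2}=\MM(C_3)$, $U_{3,1}=\MM(C_3^*)$ account for the rest.
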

\begin{proof}
    The wheel $W_4$ is the cone over the $4$-cycle, which is the unique
    minimally biconnected graph on $4$ vertices. The equivalence (i)
    $\Leftrightarrow$ (ii) thus follows from
    Theorem~\ref{thm:excludminorsgraphs}.  For (ii) $\Rightarrow$ (iii), if
    $G$ not $3$-connected, then, by Lemma~\ref{lem:not3con}, $\MM(G)$ can be
    decomposed using direct sums and $2$-sums of $3$-connected graphic
    matroids and we can assume that $G$ is $3$-connected.  If $G$ is
    $3$-level, we are done by Corollary~\ref{cor:k4only3level}. If $G$ is
    $2$-level, the result follows from Theorem~\ref{thm:main1}.
    (iii) $\Rightarrow$ (ii) follows
    from Corollary~\ref{cor:Series_level} and~\eqref{eqn:direct_sum}.
\end{proof}

By inspecting the building blocks for $2$-level
(Example~\ref{ex:serpargraphs}) and $3$-level graphs, it is tempting to think
that the building blocks of $k$-level graphs are given by the building blocks
and the forbidden minors of $\GLevels_{k-1}$.  This turns out to be false even
for $\GLevels_{4}$. Indeed $\Lev(K_5)=4$ and we cannot obtain it as a sequence
of direct sums and $2$-sums of $C_2$, $C_3$, $C_3^*$, $K_4=W_3$, and $W_4$.

\subsection{$4$-level and Theta-$2$ graphs}\label{ssec:4level}

A further hope one could nourish is that $3$-level graphs coincide with the
graphs of Theta rank $2$.  This would be the case if and only if $\Th(W_4) =
3$. The only $k$-level flacet $F$ of $W_n$ with $k > 3$ is given by the rim of
the wheel $F = E(C_n)$. To find a sum-of-squares representation of
$\ell_F(\x)$ for the basis configuration $V_{\MM(W_n)}$ of $W_n$, we may
project onto the coordinates of $F$ which coincides with the configuration of
\emph{forests} of $C_n$. Now, every subset of $E(C_n)$ is independent except
for the complete cycle $I = E(C_n)$.  Hence the configuration of forests is
given by $\{0,1\}^n \setminus \{\1\}$ and the affine function in question is
$\ell(\x) = n-1 - \sum_i x_i$. For $n=4$,
\[
    18 \ell(\x)  \ = \ 2 (\ell(\x)(\ell(\x)-4))^2 + (\ell(\x)(\ell(\x)-1))^2
    \quad\text{ for all } \x \in \{0,1\}^4, \x \neq  \1
\]
gives a sum-of-squares representation~\eqref{eqn:k-sos} of degree $\le 2$.  We
may now pullback the $2$-sos representation to $\ell_F(\x)$ which shows that
$W_4$ is Theta-$2$.

Towards a list of excluded minors for $\GTheta_2$, we focus on the class of
$4$-level graphs. Using Theorem \ref{thm:excludminorsgraphs} we easily find
the two excluded minors for $\GLevels_{ 4}$:
\begin{center}
\includegraphics[scale=3.5]{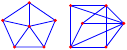}
\end{center}
The first graph is the $5$-wheel $W_5$, the second graph is the cone over
$K_{2,3}$ and is called $A_3\backslash x$ in~\cite{Oxley5wheel}. The next
result states that this is the right class to study.

\begin{prop}\label{prop:W5_theta3}
    The wheel $W_5$ has Theta rank $3$.
\end{prop}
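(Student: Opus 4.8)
The plan is to establish both bounds $\Th(W_5)\ge 3$ and $\Th(W_5)\le 3$ by reducing everything to the rim flacet and then disposing of the remaining facets by a levelness count. Write $\MM=\MM(W_5)$, let $h$ be the hub, $v_1,\dots,v_5$ the rim vertices, and $F=E(C_5)$ the rim. As in the discussion preceding the proposition, projecting $V_\MM$ onto the coordinates indexed by $F$ identifies the rim facet with the forest configuration of $C_5$, namely $\{0,1\}^5\setminus\{\1\}$, on which the facet-defining function $\rk_\MM(F)-\ell_F$ becomes $\ell(\x)=4-\sum_{i=1}^5 x_i$.

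For the lower bound I would show that $\ell$ is not $2$-sos. Suppose $\ell=\sum_i h_i^2$ on $\{0,1\}^5\setminus\{\1\}$ with $\deg h_i\le 2$. Then $f:=\ell-\sum_i h_i^2$ has degree $\le 4$, vanishes on $\{0,1\}^5\setminus\{\1\}$, and $f(\1)=-1-\sum_i h_i(\1)^2<0$. After the affine substitution $x_i\mapsto 1-x_i$ (which preserves degrees and sends $\1\mapsto\0$) this contradicts Proposition~\ref{prop:cube_gen}, which forces $\deg f\ge 5$. Hence $\ell_F$ is not $2$-sos over $V_\MM$ and $\Th(W_5)\ge 3$.

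For the upper bound I would first produce an explicit degree-$3$ representation of $\ell$. Since $\ell$ depends on $\x$ only through its value in $\{0,1,2,3,4\}$, it suffices to write the univariate identity $t$ as $\sum_i\bigl(t\,g_i(t)\bigr)^2$ on $\{0,1,2,3,4\}$ with $g_i$ quadratic; equivalently one needs a quartic $\phi(t)=\sum_i g_i(t)^2$ that is nonnegative on all of $\R$ and satisfies $\phi(t)=1/t$ at $t=1,2,3,4$, since then $\ell=\ell^2\phi(\ell)$ on the configuration. Among the one-parameter family of quartics through these four points one checks that
\[
    24\,\phi(t) \ = \ t^4-11t^3+45t^2-85t+74
\]
is strictly positive on $\R$, hence a sum of two squares of quadratics; substituting $t=\ell$ and multiplying by $\ell^2$ gives two cubics whose squares sum to $\ell$ on the configuration. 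Pulling this back along the projection yields a $3$-sos representation of $\ell_F$ over $V_\MM$.

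It remains to see that no other facet forces a higher degree, and here I would use that a $k$-level facet is always $(k-1)$-sos: interpolating $\sqrt{\rk_\MM(S)-\ell_S}$ by a polynomial in $\ell_S$ of degree $k-1$ produces a single square of degree $k-1$. Facets of type $S=E\setminus e$ are $1$-sos by Proposition~\ref{prop:only_flacets}, so only flacets remain, and a flacet $S$ is at most $(\rk_\MM(S)+1)$-level. As $\rk(\MM)=5$, a flacet has $\rk_\MM(S)\le 4$, so every flacet of rank $\le 3$ is at most $4$-level and hence $3$-sos. A rank-$4$ flacet is a flat whose restriction is biconnected on $5$ vertices, so by Proposition~\ref{prop:V_closed} it is the vertex-induced subgraph omitting exactly one vertex: omitting $h$ gives the rim (handled above), while omitting a rim vertex $v_i$ gives the fan $S=E\setminus E_{v_i}$, the complement of the degree-$3$ vertex $v_i$, which by Proposition~\ref{prop:vertexdegreelevelness} is $3$-level and hence $2$-sos. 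Thus the rim is the unique facet of level $5$, and $\Th(W_5)\le 3$. The main obstacle is the univariate construction for the rim facet, specifically checking that the interpolating quartic can be chosen nonnegative on $\R$ so that it is a genuine sum of squares rather than a polynomial merely attaining the right values, together with the bookkeeping that isolates the rim as the only facet of level exceeding $4$.
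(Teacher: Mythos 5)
Your upper bound is correct and in fact more explicit than the paper's own treatment: the paper only records the analogous sos identity for $n=4$ and, inside the proof, silently relies on the rim being the unique flacet of levelness $5$. Your quartic checks out: $24\phi(t)=t^4-11t^3+45t^2-85t+74$ interpolates $1/t$ at $t=1,2,3,4$ and has its global minimum (about $5.7/24$, near $t\approx 3.8$) strictly positive, so it is a sum of two squares of quadratics, and $\ell=\ell^2\phi(\ell)$ on $\{0,1,2,3,4\}$-valued $\ell$ gives a genuine $3$-sos certificate that pulls back along the projection. The bookkeeping isolating the rim (interpolation makes every $k$-level facet $(k-1)$-sos, Proposition~\ref{prop:only_flacets} handles $S=E\setminus e$, and Propositions~\ref{prop:V_closed} and~\ref{prop:vertexdegreelevelness} show the only rank-$4$ flacets are the rim and the $3$-level fans) is sound.

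The lower bound, however, has a genuine gap at the sentence ``Hence $\ell_F$ is not $2$-sos over $V_\MM$.'' What you prove is that $\ell(\x)=4-\sum_i x_i$ has no degree-$2$ certificate \emph{in the five rim variables} over $\{0,1\}^5\setminus\{\1\}$; what must be ruled out is a certificate $4-\ell_F=\sum_i h_i^2$ on $V_{\MM(W_5)}\subset\R^{10}$ in which the $h_i$ may involve the spoke variables. Projection transports certificates only in the harmless direction (pullback, which you correctly use for the upper bound); it does not push them forward, since the $h_i$ need not be constant on the fibers of the projection. This is exactly why the paper does \emph{not} project for the lower bound: it works directly in $\R^{E(W_5)}$, forms $f=4-\ell_F-\sum_i h_i^2\in I(V)$ of degree $\le 4$ with $f(\1_F)\neq 0$, and verifies with a degree-compatible Gr\"obner basis (Macaulay2) that all elements of $I(V)$ of degree $\le 4$ vanish at $\1_F$. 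Your route can be repaired, and then yields a computer-free argument where the paper needs a computation: the projection admits an \emph{affine section} $s$ mapping forests to spanning trees, namely keep the rim coordinates and set $s(u)_{hv_i}=1-u_{e_{i-1}}$ where $e_{i-1}=v_{i-1}v_i$ (indices mod $5$), so that each path component of a rim forest receives the spoke at its clockwise-first vertex; one checks $s(u)\in V_{\MM(W_5)}$ and $4-\ell_F(s(u))=\ell(u)$ for all $u\in\{0,1\}^5\setminus\{\1\}$. Composing the $h_i$ with $s$ then descends any degree-$2$ certificate to the rim variables, after which your M\"obius-function argument via Proposition~\ref{prop:cube_gen} (after $x_i\mapsto 1-x_i$) applies. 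Without some such descent, the inference as written is a non sequitur at the crucial point.
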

\begin{proof}
    Let $F = E(C_5)$ be the edges of the rim of the wheel which is a flat of
    rank $4$.  This is the unique flacet of levelness $5$ and it is sufficient
    to show that its facet-defining affine function $ \rk(F) - \ell_F(\x) = 4
    - \ell_F(\x)$ is not $2$-sos with respect to the spanning trees $V =
    V_{\MM(W_5)}$ of $W_5$. Arguing by contradiction, let us suppose that
    there are polynomials $h_1(\x),\dots,h_m(\x)$ of degree $\le 2$ such that
    \[
        f(\x) \ := \ 4 - \ell_F(\x) - 
        h_1(\x)^2 - \cdots -h_1(\x)^2 
    \]
    is identically zero on $V$.

    Consider the point $p = \1_F$. This is not a basis of $\MM(W_5)$ and a
    polynomial separating $p$ from $V$ is given by $f$. That is, by
    construction $f$ is a polynomial that vanishes on $V$ and $f(p) \le -1
    \neq 0$. Now we may compute a degree-compatible Gr\"obner basis of the
    vanishing ideal $I = I(V)$ using \emph{Macaulay2}~\cite{M2}. That is, we
    compute a Gr\"obner basis with respect to a term order $\preceq$ such that
    $\deg(\x^a) < \deg(\x^b)$ implies $\x^a \prec \x^b$. For such a Gr\"obner
    basis, it holds that a polynomial $f$ of degree $d$ is contained in $I$ if
    and only if it is in the ideal spanned by the Gr\"obner basis elements of
    degree at most $d$.
    Evaluating the elements of the Gr\"obner basis at $p$ shows that the only
    polynomials not vanishing on $p$ are of degree $5$. As $\deg(f) \le 4$
    by construction, this yields a contradiction.
\end{proof}

The proof suggests an interesting connection to Tutte's wheels and whirls
theorem (Theorem~\ref{thm:Tutte}): For $n = 4$ it states that the vanishing
ideal of the $n$-wheel $I(W_n)$ is generated by $I(\mathcal{W}^n)$ and a
unique polynomial of degree $n$. This should be viewed in relation to
Proposition~\ref{prop:cube_gen}: Projecting $V_{\mathcal{W}^n}$ and $V_{W_n}$
onto the coordinates of $F = E(C_n)$ yields $\{0,1\}^n$ and $\{0,1\}^n
\setminus \1$, respectively.

Oxley~\cite{Oxley5wheel} determined that the class of $3$-connected graphs not
having $W_5$ as a minor consists of $17$ individual graphs and $4$ infinite
families. The graph $A_3 \backslash x$ is clearly among these graphs and
is a minor of the $4$ infinite families as well as three further ones. This proves
the following result.
\begin{thm}
    Every $4$-level graph is obtained by direct and $2$-sums of 
    $C_2$, $C_3$, $C_3^*$, and the following $14$ graphs\\
\centerline{
\includegraphics[scale=1.8]{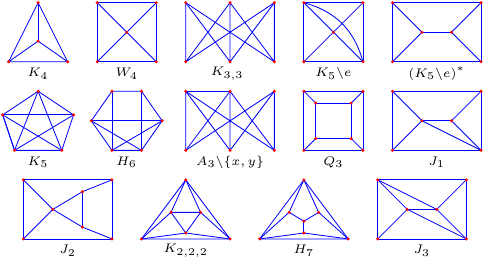}.
}
\qed
\end{thm}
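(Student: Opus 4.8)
The plan is to reduce the statement to the classification of the \emph{$3$-connected} graphs in $\GLevels_4$ and then to extract this list from Oxley's classification of $W_5$-minor-free graphs. First I would record that $\GLevels_4$ is closed under direct sums and $2$-sums, by \eqref{eqn:direct_sum} and Corollary~\ref{cor:Series_level}. Together with Lemma~\ref{lem:not3con}, which writes any matroid that is not $3$-connected as a sequence of direct sums and $2$-sums of $3$-connected proper minors, this reduces the problem as follows: given a $4$-level graph $G$, every minor of $G$ is again $4$-level by Theorem~\ref{thm:minor_closed}, so iterating the decomposition of Lemma~\ref{lem:not3con} expresses $G$ as a combination of direct sums and $2$-sums whose terminal pieces are either $3$-connected $4$-level graphs or the small non-$3$-connected atoms $C_2$, $C_3$, $C_3^*$ already encountered in the $3$-level case (Theorem~\ref{thm:level3}). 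Thus it suffices to determine the $3$-connected graphs $G$ with $\Lev(G)\le 4$.

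Next I would pin down these $3$-connected graphs by their excluded minors. By Theorem~\ref{thm:excludminorsgraphs}, a graph fails to be $4$-level exactly when it has a minor $\cone(H)$ with $H$ minimally biconnected on $5$ vertices. The only such $H$ are the $5$-cycle $C_5$ and the complete bipartite graph $K_{2,3}$, giving the two excluded minors $W_5=\cone(C_5)$ and $A_3\backslash x=\cone(K_{2,3})$ displayed above. Hence a $3$-connected graph is $4$-level if and only if it has neither $W_5$ nor $A_3\backslash x$ as a minor.

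It then remains to read off these graphs from \cite{Oxley5wheel}. Oxley shows that the $3$-connected graphs without a $W_5$-minor consist of $17$ individual graphs together with $4$ infinite families. Imposing in addition the exclusion of $A_3\backslash x$ discards every member that contains $A_3\backslash x$ as a minor: this eliminates all $4$ infinite families (among which $A_3\backslash x$ occurs) and exactly $3$ of the $17$ individual graphs. The remaining $14$ individual graphs are therefore precisely the $3$-connected graphs of levelness at most $4$, and adjoining $C_2$, $C_3$, $C_3^*$ completes the list of building blocks. The reduction of the first paragraph then assembles every $4$-level graph from these pieces, which proves the theorem.

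The step I expect to be the main obstacle is the last one: it rests entirely on a careful comparison with the explicit classification in \cite{Oxley5wheel}, where one must check, for each of the $17$ individual graphs and each of the $4$ infinite families, whether $A_3\backslash x$ appears as a minor, so as to confirm that exactly $14$ blocks survive. This is a finite but delicate bookkeeping task that depends on the detailed combinatorial descriptions of Oxley's graphs rather than on any further structural idea.
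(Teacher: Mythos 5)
Your proposal matches the paper's own argument essentially step for step: reduce to the $3$-connected case via Lemma~\ref{lem:not3con}, Corollary~\ref{cor:Series_level}, and~\eqref{eqn:direct_sum}; identify $W_5=\cone(C_5)$ and $A_3\backslash x=\cone(K_{2,3})$ as the excluded minors for $\GLevels_4$ via Theorem~\ref{thm:excludminorsgraphs} (since $C_5$ and $K_{2,3}$ are the only minimally biconnected graphs on $5$ vertices); and then extract the $14$ surviving $3$-connected blocks from Oxley's list of $17$ graphs plus $4$ infinite families by discarding everything with an $A_3\backslash x$-minor. Even the step you flag as the main obstacle --- the finite minor-containment bookkeeping against \cite{Oxley5wheel} --- is exactly where the paper's proof also delegates to Oxley's explicit classification, so your account is correct and not a different route.
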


As $A_3 \backslash x$ is Theta-2, a complete list of excluded minor has to be
extracted from the $17$ graphs plus $4$ families in~\cite{Oxley5wheel}. As a
last remark, we note that the Theta-$1$ graphs are given by series-parallel
graphs. The property of being Theta-$2$ however is independent of planarity.

\begin{prop}
    The graphs $K_5$ and $K_{3,3}$ have Theta rank $2$.
\end{prop}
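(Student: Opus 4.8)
The plan is to bound $\Th$ from below and from above separately. For the lower bound, both $K_5$ and $K_{3,3}$ contain $K_4$ as a minor, so by Theorem~\ref{thm:main1} neither is $2$-level and hence $\Th \ge 2$ for both. For the upper bound I would show that every facet-defining function of $V_\MM$ is $2$-sos. Since $K_5$ and $K_{3,3}$ are biconnected, the matroids $\MM(K_5)$ and $\MM(K_{3,3})$ are connected and the description of the facets of $V_\MM$ applies: each facet arises either from a set $E \setminus e$ or from a flacet. The facets of the first kind are $2$-level by Proposition~\ref{prop:only_flacets} and thus $1$-sos, so the whole problem reduces to the flacets.

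First I would enumerate the flacets up to symmetry. A flat of a graphic matroid whose restriction is connected is either a single edge or a vertex-induced biconnected subgraph. Every induced subgraph of $K_5$ is a clique and every induced subgraph of $K_{3,3}$ is complete bipartite, so the proper flacets are: the single edges (rank $1$, hence $2$-level), together with $E(K_3)$ and $E(K_4)$ for $K_5$, and $K_{2,3}$ and $C_4 = K_{2,2}$ for $K_{3,3}$ (the full edge sets being improper); a direct check confirms the required contractions are connected. Computing $\min_B |B \cap F|$ over spanning trees shows that $E(K_3)$ and $K_{2,3}$ are $3$-level while $E(K_4)$ and $C_4$ are $4$-level. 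In particular neither graph has a flacet of level $\ge 5$.

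It then remains to handle two regimes. For the $3$-level flacets $E(K_3)$ and $K_{2,3}$ the facet-defining function takes only three values and is therefore $2$-sos by interpolating its square root (the argument behind the inequality $\Th(V) \le \Lev(V) - 1$). For the $4$-level flacets $F$, I would observe that $g := \rk_\MM(F) - \ell_F$ takes only the four values $\{0,1,2,3\}$ on $V_\MM$, which is exactly the situation of the rim of $W_4 = \cone(C_4)$. The explicit certificate used there then applies verbatim: on $V_\MM$ one has
\[
18\,g \;=\; 2\bigl(g(g-4)\bigr)^2 + \bigl(g(g-1)\bigr)^2,
\]
and since $g(g-4)$ and $g(g-1)$ are polynomials of degree $2$, this exhibits $g$ as $2$-sos. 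Combining both regimes with the reduction above yields $\Th \le 2$ for both graphs.

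The main obstacle is that, a priori, a $4$-level facet is only guaranteed to be $3$-sos (again by $\Th \le \Lev - 1$), so the flacets $E(K_4)$ and $C_4$ are precisely where the argument could fail; this is the very phenomenon that forces $\Th(W_5) = 3$, since the rim of $W_5$ is $5$-level. What rescues $K_5$ and $K_{3,3}$ is that, unlike that rim, these flacets run over only the four consecutive values $\{0,1,2,3\}$, so the degree-$4$ nonnegative univariate interpolant of $g$ is a sum of squares and can be pulled back along the linear function $\ell_F$. Thus the crucial quantitative input is simply the enumeration step showing that no flacet of $K_5$ or $K_{3,3}$ has level $\ge 5$.
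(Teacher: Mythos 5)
Your proof is correct and takes essentially the same approach as the paper: reduce to the flacets of levelness $>3$ (the embedded $K_4$'s in $K_5$ and the $4$-cycles in $K_{3,3}$) and certify those with the degree-$2$ identity $18g = 2\bigl(g(g-4)\bigr)^2 + \bigl(g(g-1)\bigr)^2$ for $g\in\{0,1,2,3\}$, which is exactly the paper's $W_4$ certificate. The only cosmetic difference is that the paper projects onto the coordinates of $F$ (resp.\ $E\setminus F$ for $K_5$) to recognize $\{0,1\}^4\setminus\{\1\}$ (resp.\ $\{0,1\}^4\setminus\{\0\}$) and pulls the representation back, whereas you apply the univariate certificate to $g=\rk_\MM(F)-\ell_F$ directly on $V_\MM$.
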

\begin{proof}
    For both cases we use the idea that for a given flacet $F \subseteq E$, we
    may project the basis configuration $V$ onto the coordinates given by $F$
    and find a $2$-sos representation of the affine function $\rk(F) - \sum_i
    x_i$.

    For the graph $K_{3,3}$, the only flacets of levelness $>3$ are given by
    $4$-cycles. Projecting onto these coordinates yields $\{0,1\}^4 \setminus
    \1$ which is a point configuration of Theta rank $2$ as shown at the
    beginning of this subsection.

    For the complete graph $K_5$, we note that the only flacets $F$ of levelness
    $>3$ are given by the edges of an embedded $K_4$. For such a flacet, we
    might equivalently consider $\ell_{E \setminus
    F}(\x)-1 \ge 0$. Projecting onto $E \setminus F$ again yields $\{0,1\}^4 \setminus
    \0$.
\end{proof}

\section{Excluded minors for $k$-level matroids}\label{sec:klevel}

The cone construction (Definition~\ref{dfn:cone}) employed in the previous
section to show the existence of finitely many excluded minors for
$\GLevels_k$ cannot be extended to general matroids. Indeed, whereas any two
trees on $n$ vertices have the same matroid, the matroid of their cones
typically do not. Moreover, there is no Robertson-Seymour theorem for general
matroids: Minor-closed classes of matroids are generally not characterized by
finitely many excluded minors. In this section we show that $k$-level matroids
can be characterized in finite terms and we describe the class explicitly.

A matroid $\MM$ is called \Defn{minimally $\boldsymbol k$-level} if
$\Lev(\MM)=k$ and $\Lev(\NN)<\Lev(\MM)$ for every minor $\NN$ of $\MM$. It is
clear that excluded minors for $\MLevels_k$ are given by the minimally
$l$-level matroids for $l > k$. The main result of this section is the
following.

\begin{thm}\label{thm:finiteexclminorsklevel}
    Excluded minors for the class of $(k-1)$-level matroids are given by the
    minimally $k$-level matroids. Moreover, there are finitely many minimally
    $k$-level matroids.
\end{thm}

Let us formalize a notion that we already saw in the
proof of Lemma~\ref{lem:contract_oF}: A \Defn{$\boldsymbol k$-sequence of
bases} for a flacet $F$ is a collection of bases $B_1,\ldots , B_k \in
\BB(\MM)$ such that
\begin{itemize}[$\circ$]
\item $|F\cap B_1|$ is minimal among all bases of $\MM$,
\item $|F\cap B_{i+1}|=|F\cap B_i|+1$, for $1\leq i < k$,
\item $F\cap B_i\subset F\cap B_{i+1}$, for $1\leq i \leq k{-}1$, and
\item $|F\cap B_k|=\rk_{\MM}(F)$.
\end{itemize}
It is straightforward to verify that $F$ is a $k$-level flacet if and only if
$F$ has a $k$-sequence of bases. Indeed, starting with a basis $B_1$ such that
$|F \cap B_1|$ is minimal, one iteratively alters $B_{i+1}$ by some $e_i \in F
\setminus B_i$. We can also make a more refined choice.

\begin{lemma}\label{lem:ksequencewithe}
    Let $\MM$ be a connected matroid and $F$ a $k$-level flacet of $\MM$. For
    any $e$ in ${\cF := E(\MM) \setminus F}$, there exists a $k$-sequence of bases
    $B_1,\ldots , B_k$ such that $e\in B_i$ for $i=1,\ldots, k$.
\end{lemma}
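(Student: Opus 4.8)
The plan is to pass to the contraction $\NN := \MM/e$ and run the standard augmentation there, since in $\NN$ the element $e$ is absent and hence automatically survives in every basis. Because $\MM$ is connected with at least two elements (a flacet forces $F,\cF\neq\emptyset$), it has no loops, so $e$ is not a loop and $\NN$ is defined with $\rk_\MM(e)=1$. The bases of $\NN$ are exactly the sets $B\setminus e$ with $e\in B\in\BB(\MM)$, and since $e\notin F$ we have $|F\cap(B\setminus e)|=|F\cap B|$ for all such $B$. Thus a $k$-sequence of bases for $F$ in $\NN$ lifts, upon adjoining $e$, to a $k$-sequence for $F$ in $\MM$ in which every basis contains $e$; it therefore suffices to produce the former.

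First I would check that contracting $e$ leaves the extremal values of $|F\cap B|$ unchanged, so that the sequence in $\NN$ really has length $k$ and starts at the global minimum over $\MM$. Because $F$ is a flat and $e\notin F=\mathrm{cl}_\MM(F)$, we get $\rk_\MM(F\cup e)=\rk_\MM(F)+1$ and hence $\rk_\NN(F)=\rk_\MM(F\cup e)-1=\rk_\MM(F)$. For the minimum, recall $\min_{B\in\BB(\MM)}|F\cap B|=\rk(\MM)-\rk_\MM(\cF)$. In $\NN$ one has $\rk(\NN)=\rk(\MM)-1$ and $\rk_\NN(\cF\setminus e)=\rk_\MM(\cF)-1$, so $\min_{B'\in\BB(\NN)}|F\cap B'|=(\rk(\MM)-1)-(\rk_\MM(\cF)-1)=\rk(\MM)-\rk_\MM(\cF)$. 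Hence the values $|F\cap B'|$ attained over bases of $\NN$ range over exactly the same interval of integers, of length $k$, as over all bases of $\MM$.

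Next I would apply, inside $\NN$, the augmentation sketched just before the lemma. Start from a basis $B_1'$ of $\NN$ with $|F\cap B_1'|$ minimal. Whenever $|F\cap B_i'|<\rk_\NN(F)$, the independent set $F\cap B_i'$ can be enlarged within $F$, so there is $x\in F\setminus B_i'$ with $(F\cap B_i')\cup x$ independent. The fundamental circuit $C(x,B_i')$ is not contained in $(F\cap B_i')\cup x$, so it meets $B_i'\setminus F$ in some $y$; the exchange $B_{i+1}':=(B_i'\setminus y)\cup x$ is again a basis of $\NN$ with $F\cap B_{i+1}'=(F\cap B_i')\cup x$, giving the unit increment and the nesting. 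Iterating produces $B_1',\dots,B_k'$ whose $F$-intersections run from the global minimum up to $\rk_\NN(F)=\rk_\MM(F)$, and setting $B_i:=B_i'\cup\{e\}$ yields the required $k$-sequence with $e$ in every basis.

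The only genuine subtlety, and precisely the reason for contracting $e$, is that the naive augmentation carried out directly in $\MM$ could be forced to delete $e$ at some step, namely if $e$ were the unique element of $B_i\setminus F$ lying on the fundamental circuit $C(x,B_i)$. Passing to $\MM/e$ removes $e$ from the ground set, so every exchange takes place among $E\setminus e$ and $e$ is preserved for free; the price is only the routine verification above that contraction leaves the minimal and maximal values of $|F\cap B|$ unchanged.
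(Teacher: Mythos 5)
Your proof is correct, and it takes a genuinely different route from the paper's. The paper runs the augmentation directly in $\MM$: it starts from a basis $B_1$ with $e\in B_1$ and $|F\cap B_1|$ minimal (possible since connectedness makes $e$ a non-loop), and at each exchange step uses the hypothesis that $F$ is a \emph{flat} to argue that the fundamental circuit $C_i\subseteq B_i\cup\{e_i\}$ cannot lie inside $F\cup\{e\}$ --- a circuit through $e$ contained in $F\cup\{e\}$ would force $e\in \mathrm{cl}(F)=F$ --- so there is always an exchange element $f_i\in C_i\setminus(F\cup\{e\})$ and $e$ survives every step. You instead contract $e$ and run the unconstrained textbook augmentation in $\NN=\MM/e$, which makes the exchange step trivial; flatness of $F$ is then consumed elsewhere, namely in your verification that $\rk_\NN(F)=\rk_\MM(F)$ (via $e\notin\mathrm{cl}_\MM(F)$) and, together with $\rk_\NN(\cF\setminus e)=\rk_\MM(\cF)-1$ and $\rk(\NN)=\rk(\MM)-1$, that $\min_{B'\in\BB(\NN)}|F\cap B'|=\rk(\MM)-\rk_\MM(\cF)=\min_{B\in\BB(\MM)}|F\cap B|$. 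This bookkeeping is exactly the part that cannot be skipped: if $e$ were in the closure of $F$, contraction would shrink the value range and the lifted sequence would be too short, so your rank identities are where the flacet hypothesis genuinely enters. In comparison, the paper's argument is more local and avoids any transfer of extremal values (one circuit computation per step suffices), while yours isolates the role of flatness into two one-line rank computations and reduces the combinatorial core to the standard unconstrained augmentation; both use connectedness only to ensure $e$ is not a loop.
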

\begin{proof}
    Since $\MM$ is connected, $e$ is not a loop and we can find a basis $B_1$
    such that $|F \cap B_1|$ is minimal and $e \in B_1$. For $1 \le i < k$,
    $|F \cap B_i| < \rk(F)$. So, there is an $e_i
    \in F \setminus B_i$ such that $(F \cap B_i) \cup \{e_i\}$ is independent.
    Let $C_i \subseteq B_i \cup \{e_i\}$ be the fundamental circuit containing
    $e_i$. Since $F$ is a flat and $C_i$ is not a circuit in $F$, there is
    $f_i \in C_i \setminus (F \cup \{e\})$ and we define
    $B_{i+1} = (B_i \setminus f_i) \cup e_i$.
\end{proof}

Since $\MM_1\oplus_2 \MM_2$ contains both
$\MM_1$ and $\MM_2$ as minors, it follows from Lemma~\ref{lem:not3con} and
Corollary~\ref{cor:Series_level} that every minimally $k$-level matroid is
$3$-connected. 

\begin{prop}\label{prop:minimal_dualisminimalconnected}
    Let $\MM$ be a minimally $k$-level matroid and $F$ a $k$-level flacet of
    $\MM$. Then $(\MM/ F)^*$ is a minimally connected matroid.
\end{prop}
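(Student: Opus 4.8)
My plan is to unfold the definition of \emph{minimally connected} through matroid duality and then discharge the two resulting conditions using the $k$-sequence machinery together with the minimality of $\MM$. Since $(\MM/F)^* = \MM^*\setminus F$ lives on the ground set $\cF = E\setminus F$, since a matroid is connected exactly when its dual is, and since $((\MM/F)^*\setminus e)^* = (\MM/F)/e = \MM/(F\cup e)$, the statement ``$(\MM/F)^*$ is minimally connected'' is equivalent to the pair of conditions
\begin{enumerate}[\rm (A)]
    \item $\MM/F$ is connected, and
    \item $\MM/(F\cup e)$ is disconnected for every $e\in\cF$.
\end{enumerate}
Condition (A) is immediate because $F$ is a flacet. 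I would also record at the outset the cardinality bound $|\cF|\ge k$: if $B_1,\dots,B_k$ is a $k$-sequence for $F$ then $|F\cap B_1| = \rk(F)-k+1$, so $|\cF\cap B_1| = \rk(\MM)-|F\cap B_1| = \rk(\MM/F)+k-1\ge k$, using that $\MM/F$ has no loops (as $F$ is a flat) and hence $\rk(\MM/F)\ge 1$. In particular $|\cF|\ge k\ge 3$.

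For (B) I would fix $e\in\cF$ and apply Lemma~\ref{lem:ksequencewithe} to obtain a $k$-sequence $B_1,\dots,B_k$ for $F$ with $e\in B_i$ for all $i$. The sets $B_i\setminus e$ are bases of $\MM/e$, and because $e\notin F$ we have $|F\cap(B_i\setminus e)|=|F\cap B_i|$; one checks that $\rk_{\MM/e}(F)=\rk_\MM(F)$ and that $|F\cap B_1|$ is still the minimal value of $\ell_F$ over the bases of $\MM/e$, so $B_1\setminus e,\dots,B_k\setminus e$ is again a $k$-sequence and $\ell_F$ takes $k$ distinct values on $V_{\MM/e}$. Since $\MM/e$ is a proper minor of the minimally $k$-level matroid $\MM$, we have $\Lev(\MM/e)\le k-1$, and I would conclude that $F$ cannot be a flacet of $\MM/e$: were it one, the pushed-down $k$-sequence would mark it as a $k$-level flacet and force $\Lev(\MM/e)\ge k$ (should $\MM/e$ be disconnected, its only flacets are full components, on which $\ell_F$ is constant, contradicting that $\ell_F$ takes $k\ge 2$ values).

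The remaining work is to convert ``$F$ is not a flacet of $\MM/e$'' into (B). Here the key identity is $(\MM/e)|_F=\MM|_F$: because $F$ is a flat of $\MM$ and $e\notin F$, the element $e$ is a coloop of $\MM|_{F\cup e}$, so contracting it agrees with deleting it. As $\MM|_F$ is connected, $(\MM/e)|_F$ is connected, so the failure of $F$ to be a flacet of $\MM/e$ must stem from one of the other two defining conditions:
\begin{enumerate}[\rm (i)]
    \item $(\MM/e)/F=\MM/(F\cup e)$ is disconnected, which is exactly (B); or
    \item $F$ is not a flat of $\MM/e$, i.e.\ some $x\in\cF\setminus e$ satisfies $x\in\mathrm{cl}_\MM(F\cup e)$; then $\rk_{\MM/(F\cup e)}(x)=0$, so $x$ is a loop of $\MM/(F\cup e)$, and since $|\cF\setminus e|\ge 2$ this matroid has at least two elements and a loop, hence is again disconnected.
\end{enumerate}
In either case $\MM/(F\cup e)$ is disconnected, which gives (B) and finishes the proof.

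I expect the main obstacle to be precisely this last passage from ``$F$ is not a flacet of $\MM/e$'' to the disconnection of $\MM/(F\cup e)$: it requires pinning down $(\MM/e)|_F$ via the coloop identity, splitting into the non-flat and disconnected-contraction cases, and invoking $|\cF|\ge k\ge 3$ so that a single loop genuinely disconnects $\MM/(F\cup e)$. The verification that the pushed-down sequence is still a $k$-sequence (preservation of $\rk(F)$ and of the minimal value of $\ell_F$ under $\MM\rightsquigarrow\MM/e$) is routine but should be carried out carefully.
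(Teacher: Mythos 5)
Your proof is correct and is essentially the paper's own argument repackaged in contrapositive form: the paper assumes some $e\in\cF$ with $(\MM/F)/e$ connected, pushes a $k$-sequence through $e$ via Lemma~\ref{lem:ksequencewithe}, and exhibits $F$ as a $k$-level flacet of $\MM/e$ contradicting minimality, while you run the identical mechanism (the same lemma, the identity $(\MM/e)|_F\cong\MM|_F$, and the same circuit/loop analysis of flatness failing in $\MM/e$) directly for each $e$ to conclude $\MM/(F\cup e)$ is disconnected. The extra details you supply --- the bound $|\cF|\ge k$ and the treatment of a possibly disconnected $\MM/e$ --- are correct refinements of steps the paper leaves implicit, not a different route.
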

\begin{proof}
    Suppose $(\MM/ F)^*$ is not minimally connected. There exists an element
    $e\in \cF$ such that the deletion $(\MM/ F)^*\setminus e$ is a connected
    matroid.  Since a matroid is connected if and only if its dual is, we
    infer that $(\MM/ F)/ e$ is connected.

    Since $\MM$ is minimally $k$-level, it is $3$-connected. By Lemma
    \ref{lem:ksequencewithe} we can construct a $k$-sequence of bases for $F$
    such that all bases contain $e$. We have that $B_1\setminus e, \ldots ,
    B_k\setminus e$ is a $k$-sequence of bases for $F$ with respect to the
    matroid $\MM/e$. We only need to check that $F$ is a flacet of $\MM/e$.
    If $C$ is a circuit containing $e$ and some elements of $F$, it must
    contain at least a second element $e'\in \cF$ because $F$ is a flat. In
    addition, there must be at least a third element $e''\in \cF$, otherwise
    $e'$ would be a loop of $(\MM/ F)/e$, which is connected by hypothesis.
    This shows that $F$ is a flat of $\MM/e$. Moreover, $(\MM/e)/F\cong
    (\MM/F)/e$ and $(\MM/e)|_F\cong \MM|_F$ are connected. Thus $F$ is a
    $k$-level flacet of $\MM/e$, contradicting the $k$-level minimality of
    $\MM$.
\end{proof}

Similar to the case of graphs, the following proposition states that $\cF =
E(\MM) \setminus F$ is independent for a $k$-level flacet of a minimally
$k$-level matroid.

\begin{prop}\label{prop:minimalklevel_rankoftilde}
    Let $\MM$ be a minimally $k$-level matroid and $F$ a $k$-level flacet of
    $\MM$. Then $\rk(\cF)=|\cF|$.
\end{prop}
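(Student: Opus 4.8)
The plan is to argue by contradiction: assuming $\cF = E(\MM)\setminus F$ is dependent, I will delete a suitable element of $\cF$ and show that $F$ survives as a $k$-level flacet, contradicting the minimality of $\MM$. So suppose $\rk(\cF) < |\cF|$ and choose a circuit $C\subseteq\cF$ together with an element $e\in C$. Since $\MM$ is connected it has no loops, hence $|C|\ge 2$ and so $|\cF|\ge 2$; moreover $e$ lies in a circuit and is therefore not a coloop, so the bases of $\MM\setminus e$ are exactly the bases of $\MM$ avoiding $e$, and $F\subsetneq E\setminus e$ is still a proper nonempty set.

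The first step is to check that $F$ remains a flacet of $\MM\setminus e$. Because $e\in\cF$ and the ranks of $e$-free sets are unchanged upon deleting $e$, the set $F$ stays a flat and $(\MM\setminus e)|_F=\MM|_F$ stays connected. The only delicate point is that $(\MM\setminus e)/F=(\MM/F)\setminus e$ must remain connected, and this is the step I expect to be the crux. Here I would invoke Proposition~\ref{prop:minimal_dualisminimalconnected}: the dual $N^*:=(\MM/F)^*$ is minimally connected, so $N^*\setminus e$ is disconnected. By the classical fact that in a connected matroid at least one of $N^*\setminus e$ and $N^*/e$ is connected, this forces $N^*/e$ to be connected; dualizing and using that the dual of a connected matroid is connected gives that $(\MM/F)\setminus e=(N^*/e)^*$ is connected. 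Thus all three flacet conditions hold for $F$ in $\MM\setminus e$.

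It then remains to exhibit a $k$-sequence of bases for $F$ inside $\MM\setminus e$. Since $e$ lies in the circuit $C\subseteq\cF$, it is spanned by $\cF\setminus e$, so $\rk(\cF\setminus e)=\rk(\cF)$ and $\rk(\MM\setminus e)=\rk(\MM)$; consequently the minimal value $\rk(\MM)-\rk(\cF)$ and the maximal value $\rk(F)$ of $|F\cap B|$ are realized by bases avoiding $e$. Choosing a basis of $\cF$ inside $\cF\setminus e$ and completing it with elements of $F$ yields a basis $B_1$ of $\MM$ with $e\notin B_1$ realizing the minimal value $|F\cap B_1|=\rk(\MM)-\rk(\cF)$. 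Running the exchange construction of Lemma~\ref{lem:ksequencewithe}, which only ever inserts elements of $F$, then produces a $k$-sequence $B_1,\ldots,B_k$ none of which contains $e$. These are bases of $\MM\setminus e$ and certify that $F$ is a $k$-level flacet of $\MM\setminus e$, whence $\Lev(\MM\setminus e)\ge k$. This contradicts the assumption that $\MM$ is minimally $k$-level, and therefore $\rk(\cF)=|\cF|$.
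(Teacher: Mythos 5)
Your proof is correct and takes essentially the same route as the paper's: delete an element $e\in\cF$ avoided by a basis minimizing $|F\cap B|$ (guaranteed by $\rk(\cF)<|\cF|$), use Proposition~\ref{prop:minimal_dualisminimalconnected} together with the fact that in a connected matroid one of $N\setminus e$, $N/e$ is connected (you apply it to $(\MM/F)^*$ and dualize; the paper applies it directly to $\MM/F$, which is the same argument) to conclude that $F$ survives as a flacet of $\MM\setminus e$, and then transport a $k$-sequence of bases, contradicting minimality. If anything you are slightly more careful than the paper, which tacitly assumes the $k$-sequence $B_1,\dots,B_k$ avoids $e$, whereas you note explicitly that the exchange construction of Lemma~\ref{lem:ksequencewithe} only ever inserts elements of $F$ and hence preserves $e\notin B_i$.
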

\begin{proof}
    By contradiction, suppose $\rk(\cF)<|\cF|$. Consider a $k$-sequence of
    bases $B_1,\ldots , B_k$ for $F$. Because of the assumption
    $\rk(\cF)<|\cF|$, we can pick an element $e\in \cF\setminus B_1$. By
    Proposition \ref{prop:minimal_dualisminimalconnected}, $(\MM/F)/e$ is not
    connected. Since $F$ is a flacet, $\MM/F$ is connected and,
    by Lemma~\ref{lem:tutte}, $(\MM/F)\setminus e$
    is connected.  Now $F$ is a flat of $\MM\setminus e$ and both
    $(\MM\setminus e)|_F\cong \MM|_F$ and $(\MM\setminus e)/F\cong
    (\MM/F)\setminus e$ are connected.  Hence, $F$ is a flacet of the matroid
    $\MM\setminus e$.  The bases $B_1, \ldots, B_k$ are also bases for
    $\MM\setminus e$ and form a $k$-sequence for the flacet $F$. Thus
    $\MM\setminus e$ is a $k$-level minor of $\MM$, contradicting the
    $k$-level minimality of $\MM$.
\end{proof}

\begin{prop}\label{prop:reductionofF}
    Let $\MM$ be a minimally $k$-level matroid and $F$ a $k$-level flacet of
    $\MM$. Then $\MM|_F$ is a minimally connected matroid.
\end{prop}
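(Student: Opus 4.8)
The plan is to argue by contradiction, using $k$-sequences of bases together with the fact (Proposition~\ref{prop:minimalklevel_rankoftilde}) that $\cF := E(\MM)\setminus F$ is independent. Since $F$ is a flacet, $\MM|_F$ is already connected, so it suffices to prove minimality: for every $e \in F$ the deletion $(\MM|_F)\setminus e$ is disconnected. Suppose to the contrary that $(\MM|_F)\setminus e$ is connected for some $e \in F$; then $|F|\ge 2$ and $e$ is not a coloop of $\MM|_F$, so $\rk_\MM(F\setminus e)=\rk_\MM(F)=:r_F$. I would then show that $F\setminus e$ is a $k$-level flacet of the proper minor $\MM\setminus e$, forcing $\Lev(\MM\setminus e)\ge k$ and contradicting the $k$-level minimality of $\MM$.

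First I would check that $F\setminus e$ is a flacet of $\MM\setminus e$. It is a flat: for $f\in\cF$ we have $f\notin\operatorname{cl}_\MM(F)\supseteq\operatorname{cl}_\MM(F\setminus e)$ because $F$ is a flat, so $\rk((F\setminus e)\cup f)=\rk(F\setminus e)+1$. The restriction $(\MM\setminus e)|_{F\setminus e}=(\MM|_F)\setminus e$ is connected by assumption. For the contraction, note that contracting $F\setminus e$ turns $e$ into a loop (as $e\in\operatorname{cl}(F\setminus e)$), so deleting and contracting $e$ coincide; hence $(\MM\setminus e)/(F\setminus e)=\MM/(F\setminus e)/e=\MM/F$, which is connected since $F$ is a flacet of $\MM$.

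Next I would produce a $k$-sequence of bases for $F\setminus e$ in $\MM\setminus e$, all avoiding $e$. As $\MM$ is $3$-connected it has no coloop, so $E\setminus e$ has full rank $r:=\rk(\MM)$; since $\cF$ is independent, I can extend $\cF$ inside $E\setminus e$ to a basis $B_1$ of $\MM$. Then $\cF\subseteq B_1$, $e\notin B_1$, and $|F\cap B_1|=r-|\cF|$, which equals the minimum of $|F\cap B|$ over all bases of $\MM$; as $B_1$ avoids $e$, it is also the minimum over bases of $\MM\setminus e$. Starting from $B_1$ I would iterate the step from Lemma~\ref{lem:ksequencewithe}: while $|F\cap B_i|<r_F$, the set $F\cap B_i\subseteq F\setminus e$ is independent of size $<\rk((\MM|_F)\setminus e)=r_F$, so I may pick $e_i\in(F\setminus e)\setminus B_i$ with $(F\cap B_i)\cup e_i$ independent; the fundamental circuit of $e_i$ meets $\cF$ because $F$ is a flat, yielding $f_i\in\cF\cap B_i$ and a basis $B_{i+1}:=(B_i\setminus f_i)\cup e_i$ with $e\notin B_{i+1}$ and $|F\cap B_{i+1}|=|F\cap B_i|+1$. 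This gives bases with $F$-intersections $r-|\cF|,r-|\cF|+1,\dots,r_F$, a $k$-sequence of length $r_F-(r-|\cF|)+1=k$ matching the levelness of $F$ in $\MM$, all avoiding $e$. Thus $F\setminus e$ is $k$-level in $\MM\setminus e$, the desired contradiction.

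The crux, and the step I expect to be the main obstacle, is showing that deleting $e$ costs nothing: that the minimal $F$-intersection is unchanged and that the whole increasing chain can be realized without ever reusing $e$. Both rely on $e$ not being a coloop of $\MM|_F$ (so $\rk(F\setminus e)=\rk F$) and on the independence of $\cF$; the connectivity of $(\MM|_F)\setminus e$ enters only to certify that $(\MM\setminus e)|_{F\setminus e}$ remains connected, which is precisely the hypothesis that fails exactly when $\MM|_F$ is minimally connected. I would handle the degenerate case $|F|=1$ separately (there $\MM|_F$ is a single point, vacuously minimally connected) and point out the structural parallel with Proposition~\ref{prop:minimal_dualisminimalconnected}, which performs the dual move of contracting an element of $\cF$.
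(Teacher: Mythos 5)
Your proof is correct and takes essentially the same route as the paper: argue by contradiction that $(\MM|_F)\setminus e$ is connected, verify that $F\setminus e$ is a flacet of $\MM\setminus e$ via the observation that $e$ becomes a loop of $\MM/(F\setminus e)$, and exhibit an $e$-avoiding $k$-sequence of bases, with the independence of $\cF$ (Proposition~\ref{prop:minimalklevel_rankoftilde}) guaranteeing both the minimal intersection value $\rk(\MM)-|\cF|$ and the correct chain length. The only difference is cosmetic: you build the sequence bottom-up, extending $\cF$ to a basis avoiding $e$ and exchanging elements of $F\setminus e$ in, whereas the paper starts from a basis whose trace on $F$ spans $F$ and avoids $e$ and exchanges elements of $\cF$ in, descending to the minimum --- a mirror image of the same fundamental-circuit exchange argument.
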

\begin{proof}
    Suppose that $(\MM|_F)\setminus e$ is connected for some   $e\in F$. Then
    $\Fme=F\setminus e$ is a flat of $\MM\setminus e$. We show that $\Fme$ is
    a $k$-level flacet of $\MM\setminus e$.

    The matroid $(\MM\setminus e)|_{\Fme}\cong (\MM|_F)\setminus e$ is
    connected by hypothesis. Note that $\MM / \Fme$ has $e$ as a loop and
    hence $(\MM\setminus e)/\Fme\cong \MM/F$. Thus, $(\MM\setminus e)/\Fme$ is
    also connected which shows that $\Fme$ is a flacet. 
    
    At last, we show that there is a $k$-sequence of bases of $\MM\setminus e$
    for $\Fme$. Since $\MM|_F$ is connected it has a basis that avoids $e$. We
    can complete this to a basis $B_k$ of $\MM$. Now for $f \in \cF$, $B_k
    \cup f$ contains a circuit and by
    Proposition~\ref{prop:minimalklevel_rankoftilde} this circuit is not
    entirely in $\cF$. Hence, we can define a basis $B_{k-1} := B_k \setminus
    f \cup f'$ for some $f' \in B_k \cap F$. Continuing this way yields a
    $k$-sequence of bases $B_1,\dots,B_k$ for $F$ that avoids $e$ and hence is
    a $k$-sequence of bases for $\Fme$ in $\MM \setminus e$. This contradicts
    the $k$-level minimality of $\MM$.
\end{proof}

\begin{prop}\label{prop:rankofFminklevel}
    Let $\MM$ be a minimally $k$-level matroid and $F$ a $k$-level flacet of
    $\MM$. Then $\rk(F)=k-1$.
\end{prop}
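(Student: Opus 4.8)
The starting point is to reduce the claim to a statement about $\cF := E \setminus F$. By the $k$-sequence characterisation preceding Lemma~\ref{lem:ksequencewithe}, the facet function $\ell_F$ takes exactly the consecutive values $m, m+1,\dots,\rk(F)$ on $V_\MM$, where $m := \min_{B\in\BB}|F\cap B|$; hence $k = \rk(F) - m + 1$. Writing $r = \rk(\MM)$, every basis satisfies $|F\cap B| = r - |\cF\cap B|$, and since $\max_{B}|\cF\cap B| = \rk(\cF)$, we obtain $m = r - \rk(\cF)$. Therefore proving $\rk(F) = k-1$ is equivalent to proving $m = 0$, that is, that $\cF$ is a spanning set of $\MM$. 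I may assume $|F|\ge 2$: if $|F|=1$ then $\rk(F)\le 1$, so $\ell_F$ takes at most two values, forcing $k=2$ and $\rk(F)=1=k-1$ directly (a single-element flacet is not a loop).

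The plan is to obtain $m=0$ by dualizing and invoking Proposition~\ref{prop:minimalklevel_rankoftilde} on $\MM^*$. First, $\MM^*$ is again minimally $k$-level: every minor of $\MM^*$ is the dual of a minor of $\MM$, and by~\eqref{eqn:V-dual} together with Proposition~\ref{prop:TH-aff} levelness is a dual invariant, so the minimality of $\MM$ transfers verbatim. Next I claim that $\cF$ is a $k$-level flacet of $\MM^*$. Connectivity is automatic from the flacet axioms for $F$, because $\MM^*|_{\cF} = (\MM/F)^*$ and $\MM^*/\cF = (\MM|_F)^*$, and a matroid is connected if and only if its dual is. That $\cF$ is a flat of $\MM^*$ is equivalent to $F$ being a union of circuits of $\MM$; this holds because $\MM|_F$ is connected on $|F|\ge 2$ elements and hence has no coloops, so every element of $F$ lies on a circuit contained in $F$. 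Finally, $\cF$ is $k$-level: on $V_{\MM^*} = \1 - V_\MM$ one computes $\ell_\cF(\1_{E\setminus B}) = |\cF| - r + |F\cap B|$, an affine reparametrisation of $\ell_F$, so $\ell_\cF$ and $\ell_F$ take the same number $k$ of distinct values.

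With these verifications in place, Proposition~\ref{prop:minimalklevel_rankoftilde} applied to the minimally $k$-level matroid $\MM^*$ and its $k$-level flacet $\cF$ gives $\rk_{\MM^*}(F) = |F|$, i.e.\ $F$ is independent in $\MM^*$. By matroid duality (an element set is independent in $\MM^*$ exactly when its complement spans $\MM$) this means precisely that $\cF = E\setminus F$ spans $\MM$, so $\rk(\cF) = r$ and $m = r - \rk(\cF) = 0$. Consequently $\rk(F) = k-1$, as desired.

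I expect the main obstacle to be the verification that $\cF$ is a flat of $\MM^*$, i.e.\ the passage through ``$F$ is a union of circuits of $\MM$'', and making sure that the levelness of the \emph{flacet} (not merely the value count of some arbitrary linear function) is genuinely preserved under the reflection $V_{\MM^*} = \1 - V_\MM$. Once these two points are secured, the remainder is bookkeeping with the standard rank formulas for duals, restrictions, and contractions.
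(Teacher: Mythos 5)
Your proof is correct, and it takes a genuinely different route from the paper's. You first translate the claim via the $k$-sequence characterization into the identity $k = \rk(F) - m + 1$ with $m = \rk(\MM) - \rk(\cF)$, so that $\rk(F) = k-1$ becomes equivalent to $\cF$ being spanning; you then get spanningness by duality: minimal $k$-levelness is self-dual by~\eqref{eqn:V-dual} and Proposition~\ref{prop:TH-aff}, $\cF$ is a $k$-level flacet of $\MM^*$ (the connectivity conditions dualize via $\MM^*|_{\cF} = (\MM/F)^*$ and $\MM^*/\cF = (\MM|_F)^*$, and your flatness argument --- $F$ is a union of circuits because $\MM|_F$ is connected on $|F|\ge 2$ elements, with $|F|=1$ handled separately --- is sound), and Proposition~\ref{prop:minimalklevel_rankoftilde} applied to $\MM^*$ gives $\rk_{\MM^*}(F)=|F|$, hence $\cF$ spans $\MM$ and $m=0$. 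There is no circularity here, since Proposition~\ref{prop:minimalklevel_rankoftilde} precedes the statement and its proof is independent of it. The paper argues instead by direct contradiction: if $\rk(F)>k-1$, then $|F\cap B_1|>0$, so the nestedness of a $k$-sequence yields an element $e\in F$ lying in every $B_i$; contracting $e$ produces the $k$-level flacet $F\setminus e$ of $\MM/e$, where the key input is Proposition~\ref{prop:reductionofF} ($\MM|_F$ is minimally connected, so $(\MM|_F)/e$ remains connected), contradicting minimality. Your approach buys conceptual economy --- it exhibits Propositions~\ref{prop:minimalklevel_rankoftilde} and~\ref{prop:rankofFminklevel} as dual statements and bypasses Proposition~\ref{prop:reductionofF} entirely --- whereas the paper's contraction argument stays within a single toolkit and is explicitly reused in the proof of Proposition~\ref{prop:minorofminklevel}, so it does additional work downstream that your dualization does not replace.
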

\begin{proof}
    Suppose that $\rk(F)>k{-}1$. Consider a $k$-sequence $B_1, \dots , B_k$
    for $F$: by definition $|F\cap B_k|=\rk(F)>k{-}1$ and thus $|F\cap
    B_1|>0$. Equivalently, there is an element $e\in F$ such that $e\in B_i$
    for $i=1,\ldots , k$. We prove that the matroid $\MM/e$ is $k$-level with
    respect to the flacet $\Fme=F\setminus e$. Since $\MM|_F$ is minimally
    connected by Proposition~\ref{prop:reductionofF}, it follows from
    Lemma~\ref{lem:tutte} that $(\MM/e)|_{\Fme}\cong (\MM|_F)/e$ is connected.
    Also, $(\MM/e)/\Fme\cong \MM/ F$ is connected because $F$ is a flacet of
    $\MM$. Finally, $B_1\setminus e, \dots , B_k\setminus e$ are bases of
    $\MM/e$ and form a $k$-sequence for the flacet $\Fme$, contradicting the
    $k$-level minimality of $\MM$.
\end{proof}

We can finally show that the excluded minors of $\MLevels_k$ are given by the
minimally $(k+1)$-level matroids.

\begin{prop}\label{prop:minorofminklevel}
    Every minimally $(k+1)$-level matroid has a $k$-level minor.
\end{prop}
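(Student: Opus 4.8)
The plan is to produce, from the given data, a single contraction minor of $\MM$ whose levelness is exactly $k$. Since $\MM$ is minimally $(k+1)$-level, every proper minor $\NN$ already satisfies $\Lev(\NN)\le k$; hence it suffices to build a proper minor carrying a flacet that admits a $k$-sequence of bases, for this forces $\Lev(\NN)\ge k$ and therefore $\Lev(\NN)=k$.

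First I would fix a $(k+1)$-level flacet $F$ of $\MM$ together with a $(k+1)$-sequence of bases $B_1,\dots,B_{k+1}$. By Proposition~\ref{prop:rankofFminklevel} we have $\rk(F)=k$, and since a $(k+1)$-sequence satisfies $|F\cap B_{k+1}|=\rk(F)$ while the intersection sizes increase by one at each step, this forces $|F\cap B_1|=0$ and $|F\cap B_2|=1$. Let $f$ be the unique element of $F\cap B_2$. By the nestedness of the sequence, $f\in F\cap B_i$ for every $i\ge 2$, whereas $f\notin B_1$.

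The heart of the argument is to contract $f$ and follow the set $\Fme=F\setminus f$ inside $\MM/f$. I would first check that $\Fme$ is a flacet of $\MM/f$: it is a flat of $\MM/f$ because $F$ is a flat of $\MM$ containing $f$; the restriction $(\MM/f)|_{\Fme}\cong(\MM|_F)/f$ is connected because $\MM|_F$ is minimally connected (Proposition~\ref{prop:reductionofF}), so that $(\MM|_F)\setminus f$ is disconnected and hence, by Tutte's theorem (\cite[Thm.~4.3.1]{Oxley}), $(\MM|_F)/f$ is connected; and the contraction $(\MM/f)/\Fme\cong\MM/F$ is connected since $F$ is a flacet of $\MM$. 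It then remains to verify that $B_2\setminus f,\dots,B_{k+1}\setminus f$ is a $k$-sequence of bases for $\Fme$ in $\MM/f$: each $B_i\setminus f$ is a basis of $\MM/f$ because $f\in B_i$ for $i\ge 2$, and $|\Fme\cap(B_i\setminus f)|=|F\cap B_i|-1=i-2$, so these sizes run through $0,1,\dots,k-1=\rk(\Fme)$, are nested, and grow by one at each step. Consequently $\Fme$ is a $k$-level flacet of $\MM/f$, whence $\Lev(\MM/f)\ge k$; minimality of $\MM$ then yields $\Lev(\MM/f)=k$.

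The step I expect to require the most care is the choice of the element to contract: taking $f\in F\cap B_2$ is precisely what guarantees that the shifted family begins at intersection size $0$ and is therefore a genuine $k$-sequence rather than a shorter one. The other point to handle carefully is that contraction preserves the two connectivity conditions defining a flacet, which rests on the minimal-connectivity structure of $\MM|_F$ and $\MM/F$ established in the preceding propositions; once these are in place, the verification of the $k$-sequence axioms is a routine index shift.
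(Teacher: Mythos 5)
Your proof is correct and follows essentially the same route as the paper: contract the element $f$ of $F$ lying in every basis of the sequence except the first, show $F\setminus f$ is a flacet of $\MM/f$ via Proposition~\ref{prop:reductionofF} and Tutte's deletion/contraction connectivity, and shift the basis sequence. You merely spell out the flacet verification that the paper delegates to the proof of Proposition~\ref{prop:rankofFminklevel}, and your indexing (the unique element of $F\cap B_2$, using $\rk(F)=k$ to force $|F\cap B_1|=0$) is in fact cleaner than the paper's.
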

\begin{proof}
    Let $\MM$ be a minimally $(k+1)$-level matroid and $F$ a $(k+1)$-level
    flacet. Choose a \mbox{$k$-sequence} $B_0,\dots , B_{k}$ for $F$. Pick an
    element $f\in F \setminus B_0$ such that $f \in B_i$ for $i=1,\dots, k$.
    Applying the same reasoning as in the proof of
    Proposition~\ref{prop:rankofFminklevel}, we infer that $\Fme:=F\setminus
    f$ is a flacet of $\MM/f$. Moreover, $B_1\setminus f, \ldots ,B_k\setminus
    f$ is a $k$-sequence of bases which shows that $\MM/f$ is $k$-level.
\end{proof}

To complete the proof of Theorem~\ref{thm:finiteexclminorsklevel}, we show
that for fixed $k$, the size of the ground set of a minimally $k$-level
matroid is bounded. This trivially implies that there only finitely many
minimally $k$-level matroids.  To bound the size of the ground set of a
minimally $k$-level matroid $\MM$, we choose one of its $k$-level flacets $F$
and bound separately the size of $F$ and the size of its complement
$\cF=E(\MM)\setminus F$.  We quote two useful facts from Oxley's book.

\begin{prop}{\cite[Prop.~4.3.11]{Oxley}}\label{prop:finitenessofF}
    Let $\MM$ be a minimally connected matroid of rank $r$ where $r\geq 3$.
    Then $|E(\MM)|\leq 2r{-}2$. Moreover, equality holds if and only if
    $\MM\cong M(K_{2,r-1})$.
\end{prop}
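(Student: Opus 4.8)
The plan is to prove both assertions simultaneously by induction on the rank $r$, exploiting the fact that a minimally connected matroid is held together by series pairs. The structural input I would establish first is: if $\MM$ is minimally connected with $|E(\MM)|\ge 2$, then every element lies in a cocircuit of size $2$ (a \emph{series pair}). Indeed, since $\MM\setminus e$ is disconnected for every $e$, Tutte's theorem (\cite[Thm.~4.3.1]{Oxley}) forces $\MM/e$ to be connected for all $e$; a connectivity argument on how $E(\MM)\setminus e$ separates then shows that $e$ must be in series with some element. Declaring $e,f$ equivalent whenever $\{e,f\}$ is a cocircuit and passing to the transitive closure partitions $E(\MM)$ into \emph{series classes}, each of size $\ge 2$.

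The inductive step, for $r\ge 4$, splits into two cases according to these classes. If some series class $S$ has $|S|\ge 3$, I would contract a single $e\in S$. Then $\MM/e$ is connected, has rank $r-1$ and $|E(\MM)|-1$ elements, and remains minimally connected: every remaining element still has a surviving series partner, so each single deletion of $\MM/e$ produces a coloop and disconnects. The inductive hypothesis yields $|E(\MM)|-1\le 2(r-1)-2$, hence $|E(\MM)|\le 2r-3<2r-2$. If instead every series class has size exactly $2$, I would pick such a class $\{e,f\}$ and delete it. Deleting $e$ makes $f$ a coloop, so $\MM\setminus\{e,f\}$ is a connected rank-$(r-1)$ matroid with $|E(\MM)|-2$ elements; using again that every surviving element keeps a series partner disjoint from $\{e,f\}$, one checks it is minimally connected. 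The inductive hypothesis now gives $|E(\MM)|-2\le 2(r-1)-2$, i.e.\ $|E(\MM)|\le 2r-2$.

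For the base case $r=3$ I would argue directly: contracting or deleting a series class as above lands in a minimally connected matroid of rank $2$, and the only such matroid is $U_{2,3}$; tracing this back shows the unique rank-$3$ minimally connected matroid is $U_{3,4}=M(K_{2,2})$, which already realizes equality. For the equality case in general, the first case above shows that $|E(\MM)|=2r-2$ forces every series class to have size exactly $2$, so there are $t=r-1$ of them. Contracting one element from each class leaves a connected, series-pair-free matroid of rank $1$ on $r-1$ elements, which must be $U_{1,r-1}$. Since $\MM$ is recovered from $U_{1,r-1}$ by expanding each of its $r-1$ parallel elements into a series pair, $\MM\cong M(K_{2,r-1})$.

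The main obstacle is the structural lemma that \emph{every} element of a minimally connected matroid lies in a series pair: this is exactly what makes all the minimal-connectivity-preservation checks routine, since a deletion disconnects precisely because the surviving elements retain series partners. Proving it cleanly requires analysing, via the connectivity function, how $\MM\setminus e$ separates $E(\MM)\setminus e$, and this is the step I expect to be most delicate. The remaining bookkeeping — that contraction within a large class and deletion of a size-$2$ class stay minimally connected, and the reconstruction of $M(K_{2,r-1})$ at equality — is then straightforward.
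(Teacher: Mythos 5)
The paper offers no proof of this statement---it is quoted verbatim from Oxley's book---so your proposal has to stand on its own. It does not: the structural lemma on which the entire induction rests, namely that \emph{every} element of a minimally connected matroid lies in a $2$-cocircuit, is false. Take two copies of $K_{2,3}$, identify one degree-$3$ hub of each copy into a single vertex $w$, and join the two remaining hubs $u,v$ by a new edge $e=uv$. The resulting graph $G$ ($9$ vertices, $13$ edges) is minimally $2$-connected: deleting any edge other than $e$ leaves a pendant vertex, since every such edge is incident to a degree-$2$ vertex, while deleting $e$ leaves the cut vertex $w$. Hence $M(G)$ is a minimally connected matroid of rank $8$ with $13\le 2\cdot 8-2$ elements, consistent with the proposition. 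But $e$ lies in no $2$-cocircuit: a cocircuit $\{e,f\}$ would make $f$ a bridge of $G-e$, and $G-e$ is bridgeless (its two blocks are copies of $K_{2,3}$, glued at $w$). The point you miss is that in a minimally connected matroid a deletion may disconnect by creating a genuine $2$-separation rather than a coloop; your guiding heuristic that ``a deletion disconnects precisely because the surviving elements retain series partners'' is exactly what this example refutes. With the lemma gone, the partition of the ground set into series classes of size $\ge 2$, both cases of your inductive step, and your equality analysis all collapse. A second, independent error: even when $\{e,f\}$ \emph{is} a series pair, $M\setminus\{e,f\}$ need not be connected. Join two disjoint copies of $K_{2,3}$ by two new edges, one between a hub of each copy and one between the other two hubs; this graph is again minimally $2$-connected and the two new edges form a series pair, but deleting both leaves the direct sum of two copies of $M(K_{2,3})$.

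The correct input for an argument in the spirit of yours is the weaker fact that the paper itself quotes from Oxley as Proposition~\ref{prop:finitenessofG} (Ch.~4, Ex.~10(d)): in dual form, a minimally connected matroid that is not a circuit has at least $r^*(\MM)+1$ \emph{non-trivial} series classes, where $r^*=|E|-r$ is the corank---many elements are in series pairs, but not all of them. Since these classes are pairwise disjoint and each has at least two elements, $|E|\ge 2(r^*+1)$, which combined with $|E|=r+r^*$ gives $r^*\le r-2$ and hence $|E|\le 2r-2$ (the circuit $U_{r,r+1}$ satisfies the bound trivially for $r\ge 3$). In the equality case this counting forces the ground set to be covered by exactly $r-1$ series classes of size exactly $2$, at which point your reconstruction step---contracting one element from each pair to reach $U_{1,r-1}$ and recovering $\MM$ as its series extension $M(K_{2,r-1})$---is sound and can be salvaged. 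But as written, your proof fails at its foundation, not in the bookkeeping you flagged as the delicate part.
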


Recall from Section~\ref{ssec:matroids} that a parallel class of a matroid
$\MM$ is a subset $S \subseteq E$ such that for any $e,f \in S$, the set
$\{e,f\}$ is a circuit.

\begin{prop}{\cite[Ch.~4.3, Ex. 10 (d)]{Oxley}} \label{prop:finitenessofG}
    Let $\MM$ be a matroid for which $\MM^*$ is minimally connected. Then
    either $\MM\cong U_{n,1}$ for some $n\geq 3$ or $\MM$ has at least
    $\rk(\MM){+}1$ non-trivial parallel classes.
\end{prop}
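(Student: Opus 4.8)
The plan is to translate the hypothesis into two combinatorial conditions on $\MM$ and then reduce everything to the statement that every element of $\MM$ lies in a non-trivial parallel class. Since a matroid is connected exactly when its dual is, the assumption that $\MM^*$ is minimally connected says that $\MM$ is connected and that $\MM^*\setminus e=(\MM/e)^*$ is disconnected for every $e$, i.e.\ $\MM/e$ is disconnected for all $e\in E$. Feeding this into \cite[Thm.~4.3.1]{Oxley} (for connected $\MM$ one of $\MM\setminus e$, $\MM/e$ is connected) I also obtain that $\MM\setminus e$ is connected for every $e$. I would carry these three facts -- $\MM$ connected, $\MM/e$ disconnected for all $e$, $\MM\setminus e$ connected for all $e$ -- as the working hypotheses.

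Next I would handle the degenerate case and set up a counting reduction. If $\MM$ has a single parallel class then all elements are parallel, so $\rk(\MM)=1$ and $\MM\cong U_{n,1}$ (no loops, as $\MM$ is connected); this is the first alternative. Otherwise the simplification $\mathrm{si}(\MM)$ is a connected \emph{simple} matroid of rank $r=\rk(\MM)$ with at least two elements, hence has at least $r+1$ elements (a connected matroid with $\ge 2$ elements has positive corank, so $|E|\ge\rk+1$). Thus $\MM$ has at least $r+1$ parallel classes, and the whole proposition reduces to the following \emph{Key Lemma}: under the working hypotheses every element of $\MM$ lies in a non-trivial parallel class, i.e.\ is in a $2$-circuit. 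Granting this, all $\ge r+1$ classes are non-trivial and we are done, the extremal case being the circuits $\MM^*\cong U_{n,n-1}$, that is $\MM\cong U_{n,1}$.

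I would prove the Key Lemma by contradiction, assuming $e$ is a simple element. From $\MM\setminus e$ connected it follows at once that $e$ lies in no $2$-cocircuit. Let $A\sqcup B$ be the separation of the disconnected $\MM/e$. Comparing $\rk(A\cup e)+\rk(B\cup e)=\rk(\MM)+1$ (from disconnectedness of $\MM/e$) with the inequality $\rk(A)+\rk(B)\ge\rk(\MM)+1$ (forced by connectedness of $\MM\setminus e$) gives equalities throughout, whence $e\in\mathrm{cl}(A)\cap\mathrm{cl}(B)$; this exhibits $\MM$ as the \emph{parallel} connection $P(\MM_1,\MM_2)=\SeriesConn(\MM_1^*,\MM_2^*)^*$ at $e$, with $\MM_1=\MM|_{A\cup e}$ and $\MM_2=\MM|_{B\cup e}$ connected (parallel, not series, because contracting the base point of a parallel connection yields the disconnected $(\MM_1/e)\oplus(\MM_2/e)$). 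Because $e$ is simple and $\MM_1$ is connected, $e$ is never a loop or a coloop of $\MM_1/f$, so $\MM/f=P(\MM_1/f,\MM_2)$ is disconnected precisely when $\MM_1/f$ is; dually $\MM\setminus f=P(\MM_1\setminus f,\MM_2)$ connected forces both $\MM_1\setminus f$ connected and $e$ into no $2$-cocircuit of $\MM_1$. Hence the strictly smaller factor $\MM_1$ again satisfies the working hypotheses for every $f\neq e$, and an induction on $|E(\MM)|$ then produces an element $f$ with $\MM_1/f$ -- and therefore $\MM/f$ -- connected, contradicting the hypothesis.

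The hard part is exactly this inductive step, and the difficulty is bookkeeping at the base point. The factor $\MM_1$ inherits the minimality conditions only \emph{away from} its base point $e$, so the induction has to be carried out for matroids with a marked element; in particular one must ensure that $e$ does not become series-related to the base point inside the factor (and that the conditions at the base point are not needed). This is the matroid analogue of the graph fact that in a minimally $2$-connected graph every edge is incident with a degree-$2$ vertex, and it is where the real work lies. Once the Key Lemma is established, the counting of the second paragraph closes the argument.
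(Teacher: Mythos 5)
First, note the baseline for comparison: the paper does not prove this proposition at all --- it is quoted from Oxley's book (Ch.~4, Ex.~10(d)) --- so your attempt has to stand on its own, and unfortunately it does not. Your entire argument funnels through the ``Key Lemma'' that under the working hypotheses ($\MM$ connected, $\MM/e$ disconnected and $\MM\setminus e$ connected for all $e$) \emph{every} element of $\MM$ lies in a non-trivial parallel class, and this lemma is false. Counterexample: let $G$ be the graph on vertices $x,y,z,a,b,c,d$ with edges $xy,\ xa,\ az,\ zb,\ by,\ xc,\ cz,\ zd,\ dy$ (two internally disjoint $x$--$y$ paths of length~$4$ crossing at the common middle vertex $z$, plus the edge $xy$). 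This $G$ is minimally $2$-connected: deleting $xy$ makes $z$ a cutvertex, and deleting any other edge leaves a vertex of degree~$1$. Hence $\MM(G)$ is minimally connected and $\MM := \MM(G)^*$ satisfies exactly your working hypotheses. The cycles of $G$ through $xy$ are the four $5$-cycles $\{xy, x\alpha, \alpha z, z\beta, \beta y\}$ with $\alpha\in\{a,c\}$, $\beta\in\{b,d\}$, whose only common edge is $xy$; so $xy$ lies in a trivial series class of $\MM(G)$, i.e.\ a trivial parallel class of $\MM$. Note the proposition itself survives, with equality: $\rk(\MM)=9-6=3$ and the non-trivial parallel classes of $\MM$ are precisely $\{xa,az\},\{xc,cz\},\{zb,by\},\{zd,dy\}$, four in number. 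So ``all parallel classes are non-trivial'' is strictly stronger than the true statement, and your (correct) simplification count of at least $\rk(\MM)+1$ parallel classes cannot be transferred to non-trivial classes this way.

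This also shows that the inductive step you explicitly left open is not recoverable bookkeeping: in the example above, $\MM$ is exactly a parallel connection at the simple element $e=xy$ of two smaller matroids satisfying the hypotheses away from the basepoint, yet no $f$ with $\MM/f$ connected exists, so the marked-element induction you propose has no true statement to induct on. To be fair, several of your intermediate steps are sound and match standard theory: the translation of the hypothesis into the three working conditions via Oxley's Theorem~4.3.1, the rank computation forcing $e\in\mathrm{cl}(A)\cap\mathrm{cl}(B)$, the parallel-connection decomposition when $\MM/e$ is disconnected, and the count that a connected simplification of rank $r\ge 2$ has at least $r+1$ elements. But a correct proof must count the non-trivial series classes of the minimally connected matroid $\MM^*$ against its corank directly --- in the spirit of the Dirac--Plummer-type circuit lemmas in Oxley's Section~4.3 (every circuit of a minimally connected matroid meets elements lying in non-trivial series classes) --- rather than attempting to put every single element into a $2$-circuit.
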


\begin{proof}[Proof of Theorem \ref{thm:finiteexclminorsklevel}]
    In light of Theorem~\ref{thm:main1}, we only need to consider $k \ge 3$.
    Let $M$ be a minimally $k$-level matroid $\MM$.  Any $k$-level flacet $F$
    of $\MM$ is of rank $k-1$ by Proposition \ref{prop:rankofFminklevel}; By
    Proposition \ref{prop:reductionofF}, $M|_F$ is minimally connected.  If
    $\rk(F) = 2$, then Proposition~\ref{prop:finitenessofG} implies that $M|_F
    \cong U_{3,2}$. For $\rk(F) \ge 3$, by
    Proposition~\ref{prop:finitenessofF}, $F$ has at most $2(k-1)-2=2k-4$
    elements.
    
    Hence, we need to upper bound the number of elements in $\cF$. 
    Set  
    \[
        T \ := \ \{e \in \cF \; : \; \exists \, C \text{ circuit of $\MM$ 
        with  $e\in C$  and  $|C\cap \cF|=2$} \}.
    \]
    That is, every $e \in T$ is in a non-trivial parallel class in $M / F$.
    The number of non-trivial parallel classes is bounded from above by
    $\frac{|T|}{2}$. Set $S:=\cF \setminus T$.

    Define $h:=\rk(\MM){-}\rk(F){-}1$, so that $\rk(M/F)=h+1$. By
    Proposition~\ref{prop:minimal_dualisminimalconnected}, $(\MM/F)^*$ is
    minimally connected on at least $3$ elements (since for $k \ge 3$ this
    implies $|\cF|\geq 4$). By Proposition~\ref{prop:finitenessofG}
    there are two possibilities:
   
    If $\MM/F \cong U_{|\cF|,1}$, then $\rk(\MM)=k$ and $|\cF|\leq k$ because
    of Proposition~\ref{prop:minimalklevel_rankoftilde}. It follows that
    $|E(\MM)|=|F|{+}|\cF|\leq 2k{-}4{+}k=3k{-}4$. If $k=2$, then $|\cF| \le
    3$.

    On the other hand, if $\rk(\MM/F)=h{+}1>1$, then $\MM/F$ has at least $h +
    2$ non trivial parallel classes. Hence we obtain $|T|\geq 2h + 4$.
    Moreover, by Proposition~\ref{prop:minimalklevel_rankoftilde}, we have
    that 
    ${\cF=\rk(\cF)\leq \rk(\MM)=k+h}$ and this fact yields $|T|\leq k + h$.
    Together this gives 
    \[
        2h{+}4\leq k{+}h \quad \Longrightarrow \quad h\leq k{-}4.
    \]
    It is immediate that $|\cF|\leq k{+}h\leq 2k{-}4$ and thus
    $|E(\MM)|=|F| + |\cF|\leq 2k - 4 {+} 2k - 4=4k - 8.$
\end{proof}

The result of this section does not rule out that matroids of Theta rank~$k$
have infinitely many excluded minors and we did not manange to extend our
techniques to Theta rank. However, we conjecture that the class $\MTheta_k$ of
matroids of Theta rank~$k$ is described by finitely many excluded minors.

\bibliographystyle{myamsalpha}
\bibliography{ThetaLevelMatroid}

\providecommand{\bysame}{\leavevmode\hbox to3em{\hrulefill}\thinspace}
\providecommand{\MR}{\relax\ifhmode\unskip\space\fi MR }
\providecommand{\MRhref}[2]{%
  \href{http://www.ams.org/mathscinet-getitem?mr=#1}{#2}
}
\providecommand{\href}[2]{#2}
\begin{thebibliography}{GLPT12}

\bibitem[AK06]{Ardila}
Federico Ardila and Caroline~J. Klivans, \emph{The {B}ergman complex of a
  matroid and phylogenetic trees}, J. Combin. Theory Ser. B \textbf{96} (2006),
  no.~1, 38--49.

\bibitem[Bry71]{Brylawski}
Thomas~H. Brylawski, \emph{A combinatorial model for series-parallel networks},
  Trans. Amer. Math. Soc. \textbf{154} (1971), 1--22.

\bibitem[CO03]{Chaourar}
Brahim Chaourar and James Oxley, \emph{On series-parallel extensions of uniform
  matroids}, European J. Combin. \textbf{24} (2003), no.~7, 877--879.

\bibitem[Die90]{Diestel}
Reinhard Diestel, \emph{Graph decompositions. {A} study in infinite graph
  theory}, Oxford Science Publications, Oxford University Press, New York,
  1990.

\bibitem[Dir67]{dirac}
Gabriel~A. Dirac, \emph{Minimally {$2$}-connected graphs}, J. Reine Angew.
  Math. \textbf{228} (1967), 204--216.

\bibitem[Edm70]{Edmonds70}
Jack Edmonds, \emph{Submodular functions, matroids, and certain polyhedra},
  Combinatorial {S}tructures and their {A}pplications ({P}roc. {C}algary
  {I}nternat. {C}onf., {C}algary, {A}lta., 1969), Gordon and Breach, New York,
  1970, pp.~69--87.

\bibitem[FS05]{Sturmfels}
Eva~Maria Feichtner and Bernd Sturmfels, \emph{Matroid polytopes, nested sets
  and {B}ergman fans}, Port. Math. (N.S.) \textbf{62} (2005), no.~4, 437--468.

\bibitem[GLPT12]{GLPT}
Jo{\~a}o Gouveia, Monique Laurent, Pablo~A. Parrilo, and Rekha Thomas, \emph{A
  new semidefinite programming hierarchy for cycles in binary matroids and cuts
  in graphs}, Math. Program. \textbf{133} (2012), no.~1-2, Ser. A, 203--225.
  \MR{2921097}

\bibitem[GPT10]{GPT}
Jo{\~a}o Gouveia, Pablo~A. Parrilo, and Rekha~R. Thomas, \emph{Theta bodies for
  polynomial ideals}, SIAM J. Optim. \textbf{20} (2010), no.~4, 2097--2118.

\bibitem[GPT13]{GPT2}
\bysame, \emph{Lifts of convex sets and cone factorizations}, Math. Oper. Res.
  \textbf{38} (2013), no.~2, 248--264.

\bibitem[GRT13]{GRT}
Jo{\~a}o Gouveia, Richard~Z. Robinson, and Rekha~R. Thomas, \emph{Polytopes of
  minimum positive semidefinite rank}, Discrete Comput. Geom. \textbf{50}
  (2013), no.~3, 679--699.

\bibitem[GS]{M2}
Daniel~R. Grayson and Michael~E. Stillman, \emph{Macaulay2, a software system
  for research in algebraic geometry}, Available at
  \url{http://www.math.uiuc.edu/Macaulay2}.

\bibitem[Kim10]{Kim}
Sangwook Kim, \emph{Flag enumerations of matroid base polytopes}, J. Combin.
  Theory Ser. A \textbf{117} (2010), no.~7, 928--942.

\bibitem[McM76]{mcmullen}
Peter McMullen, \emph{Constructions for projectively unique polytopes},
  Discrete Math. \textbf{14} (1976), no.~4, 347--358.

\bibitem[Oxl89]{Oxley5wheel}
James~G. Oxley, \emph{The regular matroids with no {$5$}-wheel minor}, J.
  Combin. Theory Ser. B \textbf{46} (1989), no.~3, 292--305.

\bibitem[Oxl11]{Oxley}
\bysame, \emph{Matroid theory}, second ed., Oxford Graduate Texts in
  Mathematics, vol.~21, Oxford University Press, Oxford, 2011.

\bibitem[Plu68]{Plummer}
Michael~D. Plummer, \emph{On minimal blocks}, Trans. Amer. Math. Soc.
  \textbf{134} (1968), 85--94.

\bibitem[RS04]{RobertsonSeymour}
Neil Robertson and Paul~D. Seymour, \emph{Graph minors. {XX}. {W}agner's
  conjecture}, J. Combin. Theory Ser. B \textbf{92} (2004), no.~2, 325--357.

\bibitem[SWZ09]{SWZ}
Raman Sanyal, Axel Werner, and G{\"u}nter~M. Ziegler, \emph{On {K}alai's
  conjectures concerning centrally symmetric polytopes}, Discrete Comput. Geom.
  \textbf{41} (2009), no.~2, 183--198.

\bibitem[Sch03]{Schrijver}
Alexander Schrijver, \emph{Combinatorial optimization. {P}olyhedra and
  efficiency. {V}ol. {B}}, Algorithms and Combinatorics, vol.~24,
  Springer-Verlag, Berlin, 2003.

\bibitem[Sul06]{Sullivant}
Seth Sullivant, \emph{Compressed polytopes and statistical disclosure
  limitation}, Tohoku Math. J. (2) \textbf{58} (2006), no.~3, 433--445.

\end{thebibliography}

\end{document}